\documentclass[12pt,reqno,letterpaper]{amsart}

\usepackage[T1]{fontenc}
\usepackage{amsmath,amssymb,amsfonts,amsthm,mathtools}
\usepackage{enumitem}
\usepackage{xcolor}
\usepackage{geometry}
\usepackage{hyperref}

\hypersetup{
  colorlinks=true,
  linkcolor=blue,
  citecolor=blue,
  urlcolor=blue
}
\allowdisplaybreaks
\numberwithin{equation}{section}

\newtheorem{theorem}{Theorem}[section]
\newtheorem{proposition}[theorem]{Proposition}
\newtheorem{lemma}[theorem]{Lemma}
\theoremstyle{definition}
\newtheorem{definition}[theorem]{Definition}

\newcommand{\T}{\mathbb T}
\newcommand{\R}{\mathbb R}
\newcommand{\Z}{\mathbb Z}
\newcommand{\A}{\mathbb A}
\newcommand{\eps}{\varepsilon}
\newcommand{\la}{\lambda}
\newcommand{\supp}{\operatorname{supp}}
\newcommand{\ip}[2]{\left\langle #1,#2\right\rangle}
\newcommand{\norm}[1]{\left\lVert #1\right\rVert}
\newcommand{\abs}[1]{\left\lvert #1\right\rvert}

\title[Singular solutions for KP and modified KP equations]
{Non-unique singular solutions for KP and modified KP equations on
$\mathbb T^2$ and $\mathbb R^2$}

\author{Alexandru F. Radu}
\address{Simion Stoilow Institute of Mathematics, Romanian Academy,
Calea Grivitei Street, no.~21, 010702 Bucharest, Romania}
\email{alexandru.radu@imar.ro}

\begin{document}

\begin{abstract}
We construct infinitely many weak singular solutions with zero initial data
and compact time support for third- and fifth-order KP-I, KP-II, and their
modified counterparts on $\mathbb T^2$ and $\mathbb R^2$.  Their nonlinearities
are cutoff-independent, absolutely convergent Fourier convolutions.  Quadratic
solutions belong to $C_tL^p$ for $p<2$ and
$C_t(H^{-\sigma,0}\cap H^{-\sigma})$ for $\sigma>0$.  Modified solutions
belong to $C_tL^p$ for $p<3$.  One cubic family lies in $C_tH^\alpha$ for
$\alpha<1/3$; another has parabolic Fourier support and lies in $C_tH^{s,0}$
for $s<1/2$ and $C_tH^\alpha$ for $\alpha<1/4$.  The exponents $1/3$ and
$1/2$ are sharp at the $L^3$ product threshold.  For quadratic fifth-order
KP on $\mathbb R^2$, the nonuniqueness range is almost sharp.  We also
construct periodic stationary KP-I and KP-II solutions and prove that $L^2$
is the sharp threshold between singular stationary KP-I solutions and
smoothness.
\end{abstract}

\keywords{KP-I and KP-II equations, modified KP equations,
fifth-order KP equations, weak singular solutions, convex integration,
nonuniqueness, stationary solutions}
\subjclass[2020]{35Q53, 35A02, 35D30}

\maketitle

\section{Introduction}

\subsection{Equation and constraints}

For $r\in\{2,3\}$, $d\in\{3,5\}$, and
$\kappa\in\{-1,1\}$, on each of the domains
$\T^2=(\R/\Z)_x\times(\R/\Z)_y$ and $\R^2$, consider
\begin{equation}\label{eq:KP-family}
 \partial_tu+(-1)^{(d+1)/2}\partial_x^du
 +\kappa\partial_x^{-1}\partial_y^2u+\partial_x(u^r)=0.
\end{equation}
The choice $\kappa=-1$ gives KP-I, while $\kappa=1$ gives KP-II.  The
case $r=2$ is quadratic, and $r=3$ is the modified KP equation with cubic
nonlinearity.  Under $v=\sqrt3\,u$, the cubic term takes the form
$v^2\partial_xv$.  The integrable modified KP-II system also contains a
nonlocal quadratic term and is related to KP-II through the Miura transform~\cite{KenigMartel}.
We use this $\kappa$-label for both dispersion orders.

The third-order quadratic equation was introduced by Kadomtsev and
Petviashvili~\cite{KadomtsevPetviashvili}.  The standard
fifth-order quadratic convention follows by reflection: if $r=2$, $d=5$,
and $u$ solves \eqref{eq:KP-family}, then
\begin{equation}\label{eq:fifth-order-reflection}
 v(t,x,y)=-u(t,-x,y)
\end{equation}
solves
\begin{equation}\label{eq:fifth-order-divided}
 \partial_tv+\partial_x^5v
 -\kappa\partial_x^{-1}\partial_y^2v+\partial_x(v^2)=0.
\end{equation}
In the notation customary for the fifth-order equation,
$\delta=-\kappa$, with $\delta=1$ for KP-I and $\delta=-1$ for KP-II~\cite{SautTzvetkovFifth,Patterson5KP}.

On the torus we work in the zero $x$-mean class, on which
$\partial_x^{-1}$ is well defined:
\begin{equation}\label{eq:xmeanconstraint}
    \widehat u(t,0,m)=0
    \qquad\text{for every }m\in\Z.
\end{equation}
Applying $\partial_x$ formally gives the local equation
\begin{equation}\label{eq:localKP-kappa}
 u_{tx}+(-1)^{(d+1)/2}\partial_x^{d+1}u
 +\kappa u_{yy}+\partial_x^2(u^r)=0.
\end{equation}
The construction uses \eqref{eq:KP-family} together with
\eqref{eq:xmeanconstraint}, since \eqref{eq:localKP-kappa} loses an
arbitrary integration constant depending on $(t,y)$.
We write $C_tX=C([0,1];X)$ unless another time interval is displayed and set
\[
 \supp\widehat f
 :=\overline{\bigcup_{t\in[0,1]}\supp\widehat f(t)}.
\]
A fixed
$x$-frequency gap on $\R^2$ means that one constant $c_0>0$ satisfies
$\widehat u(t,\xi,\eta)=0$ for every $t$ whenever $\abs\xi<c_0$.
The notation $H^{s,0}$ places the Sobolev weight only on the
$x$-frequency, whereas $H^s$ denotes the isotropic space.

\subsection{Cauchy theory and singular solution classes}

For KP-I on $\R^2$, Ionescu--Kenig--Tataru proved global well-posedness in
the energy space~\cite{IKT}, while Guo--Molinet obtained unconditional
local well-posedness in $H^{s,0}$ for $s>3/4$ and unconditional global
well-posedness in the energy scale~\cite{GuoMolinet}; continuous local
well-posedness is known for $s>1/2$~\cite{Guo}.  The flow map is not $C^2$
at the origin~\cite{MolinetSautTzvetkovIllposed}.  On periodic domains,
Saut--Tzvetkov identified the resonant obstruction to Bourgain-space
iteration~\cite{SautTzvetkovPeriodic}, Zhang proved local well-posedness in
a Besov-energy space~\cite{Zhang}, and Kinoshita--Sanwal--Schippa showed
that the fully periodic equation is not semilinear in the sense of a $C^2$
flow map~\cite{KSS}.

The KP-II theory reaches lower regularity.  Bourgain constructed the
canonical $L^2$ flow on $\T^2$~\cite{BourgainKP}; on $\R^2$, Hadac proved
local well-posedness in $H^{s,0}$ for $s>-1/2$~\cite{Hadac}, and
Hadac--Herr--Koch reached the scaling-critical spaces $H^{-1/2,0}$ and
$\dot H^{-1/2,0}$~\cite{HHK}.  Herr--Schippa--Tzvetkov extended the
periodic canonical flow locally to $H^{s,0}$ for $s>-1/90$~\cite{HST}.
In these results, uniqueness is formulated in an auxiliary resolution
space or within a continuous extension of the canonical flow.  In the weak
singular class used here, absolute Fourier summability defines the nonlinear
term as a distribution and gives cutoff independence.

The fifth-order theory was developed by Saut--Tzvetkov~\cite{SautTzvetkovFifth}.  Robert proved global well-posedness for periodic
fifth-order KP-I in its natural energy space~\cite{RobertFifth}, and
Patterson proved unconditional uniqueness for fifth-order KP-I and KP-II
on $\R^2$~\cite{Patterson5KP}.  For the modified
equations, Kenig--Ziesler developed whole-space local theories for KP-I and
KP-II~\cite{KenigZiesler}, Gr\"unrock treated generalized KP-II
nonlinearities, including the cubic case, in nearly scaling-critical anisotropic spaces~\cite{Gruenrock}, and Bozgan studied third-order modified KP-I in periodic
geometries~\cite{BozganPeriodic} and fifth-order modified KP-I on $\R^2$
and $\R\times\T$~\cite{BozganFifthModified}.

Related definitions of nonlinear terms by convergent Fourier expansions
occur in several settings.
Lemari\'e--Rieusset uses a convergent double Fourier expansion for stationary
two-dimensional Navier--Stokes~\cite{LemarieRieusset}.
Ashkarian--Bhargava--Gismondi--Novack use an absolutely summable
Littlewood--Paley paraproduct expansion with intermittent building blocks~\cite{ABGN}, while Christ uses Fourier cutoffs for rough dispersive
solutions~\cite{Christ}.

\subsection{Main results}

On $\T^2$ we use normalized Haar measure and write
\[
 \widehat f(n,m)=\int_{\T^2}f(x,y)e^{-2\pi i(nx+my)}\,dx\,dy,
 \qquad k=(n,m),\qquad \langle k\rangle=(1+n^2+m^2)^{1/2}.
\]
Define
\[
 \mathbb P_{x=0}f=\sum_{m\in\Z}\widehat f(0,m)e^{2\pi imy},
 \qquad \mathbb P_{x\neq0}f=(I-\mathbb P_{x=0})f,
\]
and, for integers $j\geq1$,
\begin{equation}\label{eq:inversex}
 \widehat{\partial_x^{-j}f}(n,m)
 =
 \begin{cases}
  (2\pi in)^{-j}\widehat f(n,m),&n\neq0,\\
  0,&n=0.
 \end{cases}
\end{equation}
For $s\in\R$, put
\begin{equation}\label{eq:timewiener}
 \norm{f}_{\A^s_{x\neq0,t}}
 :=\sum_{\substack{n\neq0\\m\in\Z}}
 \langle(n,m)\rangle^s
 \norm{\widehat f(\cdot,n,m)}_{C_t}.
\end{equation}

\begin{definition}[Periodic nonlinear product]
\label{def:Q}
Fix $r\in\{2,3\}$ and $S>0$.  For
$u_1,\ldots,u_r\in C_t\mathcal D'(\T^2)$, set
\begin{equation}\label{eq:periodic-AF}
 \operatorname{AF}_{S,r}^x(u_1,\ldots,u_r)
 :=\sum_{(k_1+\cdots+k_r)_x\neq0}
 \langle k_1+\cdots+k_r\rangle^{-S}
 \norm{\prod_{j=1}^r\widehat u_j(\cdot,k_j)}_{C_t},
\end{equation}
where the sum is over $(k_1,\ldots,k_r)\in(\Z^2)^r$.  We abbreviate
the diagonal value by $\operatorname{AF}_{S,r}^x(u)$ and, when $r=2$,
write $\operatorname{AF}_S^x(u,v)$ and $\operatorname{AF}_S^x(u)$.
If $\operatorname{AF}_{S,r}^x(u)<\infty$, define
\begin{equation}\label{eq:periodic-product}
 \widehat{\mathbb P_{x\neq0}(u^r)}(t,k)
 =\sum_{k_1+\cdots+k_r=k}\prod_{j=1}^r\widehat u(t,k_j),
 \qquad k_x\neq0,
\end{equation}
and set the coefficients on $k_x=0$ equal to zero.  Then
\begin{equation}\label{eq:Nbound}
 \norm{\mathbb P_{x\neq0}(u^r)}_{\A^{-S}_{x\neq0,t}}
 \leq\operatorname{AF}_{S,r}^x(u).
\end{equation}
\end{definition}

\begin{definition}[Periodic weak singular solution]
\label{def:singularsolution}
Fix $r\in\{2,3\}$, $d\in\{3,5\}$, and $\kappa\in\{-1,1\}$.
Let $u_{\mathrm{in}}\in\mathcal D'(\T^2)$ satisfy
$\widehat u_{\mathrm{in}}(0,m)=0$ for every $m\in\Z$.  A function
$u\in C_t\mathcal D'(\T^2)$ is a weak singular solution of
\eqref{eq:KP-family} with initial datum $u_{\mathrm{in}}$ if
\begin{enumerate}[label=\textup{(\roman*)}]
 \item $\mathbb P_{x\neq0}u=u$;
 \item $\operatorname{AF}_{S,r}^x(u)<\infty$ for some $S>0$;
 \item for every $\phi\in C_c^\infty([0,1)\times\T^2)$,
 \begin{equation}\label{eq:weakform}
  \int_0^1\left[
   \ip{u}{\partial_t\phi+(-1)^{(d+1)/2}\partial_x^d\phi
    +\kappa\partial_x^{-1}\partial_y^2\phi}
   +\ip{\mathbb P_{x\neq0}(u^r)}{\partial_x\phi}
  \right]dt
  =-\ip{u_{\mathrm{in}}}{\phi(0)}.
 \end{equation}
\end{enumerate}
Here $\partial_x^{-1}$ is defined by \eqref{eq:inversex}.
\end{definition}

\begin{theorem}[Periodic KP and modified KP nonuniqueness]
\label{thm:periodic}
Fix $r\in\{2,3\}$, $d\in\{3,5\}$, $\kappa\in\{-1,1\}$, and
$S>d+1$.  For every nonempty open interval $J\subset(0,1)$ there are
infinitely many real periodic weak singular solutions of
\eqref{eq:KP-family} such that
\begin{equation}\label{eq:periodic-regularity}
 \begin{aligned}
 &u(0)=0,\qquad \supp_tu\Subset J,\qquad \partial_yu\not\equiv0,\\
 &u\in\bigcap_{1\leq p<r}C_tL^p(\T^2)
 \cap\bigcap_{\alpha<1-\frac2r}
 C_t\bigl(H^{\alpha,0}(\T^2)\cap H^\alpha(\T^2)\bigr),\\
 &\operatorname{AF}_{S,r}^x(u)<\infty.
 \end{aligned}
\end{equation}
When $r=3$, there are also infinitely many real periodic weak singular
solutions satisfying
\begin{equation}\label{eq:periodic-parabolic-regularity}
 \begin{aligned}
 &u(0)=0,\qquad \supp_tu\Subset J,\qquad \partial_yu\not\equiv0,\\
 &u\in\bigcap_{1\leq p<3}C_tL^p(\T^2)
 \cap\bigcap_{s<1/2}C_tH^{s,0}(\T^2)
 \cap\bigcap_{\alpha<1/4}C_tH^\alpha(\T^2),\\
 &\operatorname{AF}_{S,3}^x(u)<\infty,\\
 &\supp\widehat u\subset\{(n,m):\abs m\leq C\abs n^2\}
 \end{aligned}
\end{equation}
for some $C>0$.
\end{theorem}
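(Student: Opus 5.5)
We would prove Theorem~\ref{thm:periodic} by an iterative Fourier-side construction in the spirit of convex integration, carried out entirely in the coefficient space $\A^{-S}_{x\neq0,t}$. Starting from $u_0=0$, we would produce approximate solutions $u_q$ together with errors $R_q$ such that
\[
 \partial_tu_q+(-1)^{(d+1)/2}\partial_x^du_q+\kappa\partial_x^{-1}\partial_y^2u_q+\partial_x\mathbb P_{x\neq0}(u_q^r)=\partial_xR_q .
\]
The first step is seeded by a smooth, compactly supported-in-$J$ forcing $R_0$, whose choice we vary to get infinitely many solutions with $\partial_yu\not\equiv0$. The building blocks are linear waves $a(t)e^{2\pi i(k\cdot x-\omega(k)t)}$ with $\omega(k)=\omega_{d,\kappa}(n,m)$ the dispersion relation. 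The linear operator annihilates the phase exactly, so the only linear error is $\dot a(t)e^{2\pi i(\cdots)}$, which is harmless provided the amplitudes vary on the slow scale fixed by $J$. Real-valuedness comes from adding conjugate modes; $u(0)=0$ and $\supp_tu\Subset J$ come from amplitudes carrying a fixed time cutoff inside $J$.

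The key step is the cancellation of a low-frequency error mode $\ell$, $\ell_x\neq0$, by high-frequency interactions. For $r=2$ we choose many pairs $(k,k')$ with $k+k'=\ell$ and $\abs{k}\sim\la_{q+1}$, and amplitudes with $\sum a_ka_{k'}=-\widehat{R_q}(\ell)$ up to the time-dependent phase mismatch $\omega(k)+\omega(k')-\omega(\ell)$. Since $\omega$ is a polynomial, this mismatch can be absorbed by letting the amplitudes carry the compensating phase. Taking the pairs to satisfy the resonance condition keeps $\dot a$ small, and the parabolic family for $r=3$ is exactly the choice of resonant triples lying in $\{\abs m\leq C\abs n^2\}$. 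For $r=3$ we use triples $k_1+k_2+k_3=\ell$ in the same way. Spreading the error over $N_{q+1}$ modes with coefficients of size $(\abs{\widehat R_q(\ell)}/N_{q+1})^{1/r}$, arranged as intermittent Dirichlet-type bumps, gives the following control:
\begin{itemize}
 \item $\norm{w_{q+1}}_{L^p}$ is summable in $q$ precisely when $p<r$;
 \item the $H^\alpha$ norms are summable precisely when $\alpha<1-2/r$;
 \item for the parabolic family, the $H^{s,0}$ and $H^\alpha$ thresholds $1/2$ and $1/4$ follow because $\abs m\lesssim n^2$ converts $x$-regularity into isotropic regularity at half the rate.
\end{itemize}

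The new error $R_{q+1}$ collects three contributions: the $\dot a$ terms, the cross interactions between $w_{q+1}$ and $u_q$, and the nonresonant self-interactions of $w_{q+1}$. These live at frequencies $\gtrsim\la_{q+1}$, or at controlled intermediate ones. Measured with weight $\langle k\rangle^{-S}$, they are small because $S>d+1$ beats the $d+1$ derivatives lost by the $\partial_x$ and the linear symbol, up to the polynomial growth of the coefficients. We would then choose $\la_{q+1}$ superexponentially, and $N_{q+1}$ accordingly, so that $\norm{R_q}_{\A^{-S}_{x\neq0,t}}\to0$ and $\sum_q\operatorname{AF}^x_{S,r}$ increments converge. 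The limit $u=\sum_qw_q$ then satisfies \eqref{eq:weakform} with $u_{\mathrm{in}}=0$, by \eqref{eq:Nbound} and dominated convergence of the absolutely convergent convolutions. Infinitely many distinct solutions come from varying $R_0$, or equally the first frequency $\la_1$.

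The main obstacle is the bookkeeping in the self-interaction step. Products of the new high modes with each other produce frequencies that can again be low, and their absolute sum must be bounded in $\operatorname{AF}^x_{S,r}$ even though $u\notin L^2$ (respectively $u\notin L^3$). We would handle this by choosing the $k$'s in sparse, lacunary directional families, so that the only low-frequency outputs are the designed ones $\ell$. All other $r$-fold sums are then forced to have $\abs{k_1+\cdots+k_r}\gtrsim\la_{q+1}^{\theta}$ for some $\theta>0$, and we would count them against $\langle\cdot\rangle^{-S}$.
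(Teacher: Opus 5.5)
You have the right architecture: errors measured in $\A^{-S}_{x\neq0,t}$, high--high$\to$low cancellation, the weight with $S>d+1$ absorbing the linear symbol, $\operatorname{AF}$ increments bounded by the current error, and passage to the limit by absolute convergence. \textbf{What is missing is the quantitative core.} You never build a perturbation that cancels the error exactly at low frequency, is intermittent, and has controlled non-designed self-interactions. What you do specify contradicts the regularity you claim.

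\textbf{Lacunary families destroy intermittency.} Take $r=2$. The norm $\norm{w}_{L^2}$ is not small, since the cancellation fixes it. H\"older gives
\[
\textstyle\sum_s\abs{\widehat{w^2}(s)}^2=\norm{w}_{L^4}^4\geq\norm{w}_{L^2}^6\norm{w}_{L^1}^{-2}.
\]
So $\norm{w}_{L^1}\to0$ forces $w^2$ to have many large coefficients at \emph{non-designed} frequencies, because the designed ones and $s=0$ contribute only $O(1)$. A lacunary (Sidon-type, bounded-multiplicity) family has $\norm{w}_{L^4}\lesssim1$, hence $\norm{w}_{L^1}\gtrsim1$, so the $L^p$ bounds for $p<2$ fail.

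\textbf{The cubic scaling is wrong either way.} For $r=3$, coefficients of size $(\abs{\widehat R_q(\ell)}/N)^{1/3}$ on $N$ modes cannot work. With $O(1)$ designed triples per mode, $\norm{w}_{L^2}^2\simeq N^{1/3}\to\infty$, so no $H^\alpha$ bound with $\alpha\geq0$ is possible. With a Dirichlet block there are $\sim N^2$ zero-sum triples, and the low coefficient of $w^3$ becomes $\sim N\abs{\widehat R_q(\ell)}$.

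\textbf{The exponents $1/3$ and $1/2$ are asserted, not derived.} They come from Lemma~\ref{lem:cubic-profile}. The Fej\'er products $Q_J(\mu x)F_J(\mu x)F_J(\mu y)$, resp.\ with $F_{J^2}(\mu y)$, have $\sim J^4$, resp.\ $J^6$, zero-sum triples. Cubic normalization then gives coefficients $J^{-4/3}$, resp.\ $J^{-2}$, and $L^2$ norms $J^{-1/3}$, resp.\ $J^{-1/2}$, at $x$-frequency $\simeq\la$. The parabolic gain comes purely from transverse room: $\abs m\lesssim\la^2$ allows a $y$-kernel of order $J^2$.

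\textbf{Resonance plays no role and is not available.} For KP-II the quadratic resonance function satisfies $\abs\Omega\gtrsim\abs{\xi_1\xi_2\xi_3}$, because the transverse term has the same sign as the dispersive one, so resonant high--high$\to$low pairs do not exist. Moreover $\omega$ contains $\eta^2/\xi$ and is not a polynomial, and amplitudes carrying compensating phases are not slow. None of this is needed: the linear symbol is $O(\langle k\rangle^{d-1})$, so Lemma~\ref{lem:L1wiener} gives $\norm{E_D}_{\A^{-S}_{x\neq0,t}}\lesssim\norm{w}_{C_tL^1}$.

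\textbf{How the paper closes the gap.} It uses one amplitude and one profile, $w=h\,b\,\rho$, with $b=P_{\leq\mu^{1/2}}^{x,y}(A-E)^{1/r}$. The profile is supported on $\mu\Z$ (resp.\ $(\mu\Z)^2$), has nonnegative coefficients, and satisfies $\int\rho^r=1$. Then
\[
E+w^r=h^rA+h^r(b^r-a^r)+h^rb^r(\rho^r-1).
\]
The constant $h^rA$ is annihilated by $\mathbb P_{x\neq0}$, all error modes are cancelled at once, and the root is smooth because $A$ is large.

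\textbf{Non-designed outputs are estimated, not avoided.} They sit at $s\neq0$ on the lattice. Their coefficients are bounded uniformly in $s$, by $\sum_j\abs{c_jc_{s-j}}\leq1$ (resp.\ $\widehat{\rho^3}(s)\leq\norm{\rho}_{L^3}^3\leq C$), and $\sum_s\abs s^{-S}\lesssim\mu^{-S}$. Counting tuples instead fails in the cubic case: it gives $J^2\mu^{-S}=\la^{2-\eps(S+2)}$, which does not decay for small $\eps$. Small $\eps$ is forced by $\alpha<(1-\eps)/3$. The terms linear in $E$ enter $\operatorname{AF}$ with coefficient exactly one, so the increment is $\norm{E}_{\A^{-S}_{x\neq0,t}}+O(A^{-1})+O(A\mu^{-S})$.

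\textbf{The seed is inconsistent.} If $u_0=0$, the relaxed equation forces $\partial_xR_0=0$, so $R_0=0$ in the zero-mean class. The paper instead seeds with $\gamma\theta(t)\cos(2\pi n_*x)\cos(2\pi m_*y)$, whose defect is small for large $n_*$. It never touches the mode $(n_*,m_*)$, which gives $\partial_yu\not\equiv0$.

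\textbf{A fixed time cutoff fails.} The $h'$ part of the new error lives where $h<1$, so the next cutoff must be $1$ there. The paper enlarges time supports by the summable amounts $\la_q^{-\beta}$ and pays $\la_q^{\beta}$ against the smallness of $\norm{w}_{C_tL^1}$. It then obtains arbitrary $J$ and infinitely many distinct solutions from the rescaled, time-translated solutions $u_{\Lambda,t_0}$.
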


On $\R^2$, write $k=(\xi,\eta)$ and use
\[
 \widehat f(\xi,\eta)
 =\int_{\R^2}f(x,y)e^{-2\pi i(x\xi+y\eta)}\,dx\,dy,
 \qquad \langle k\rangle=(1+\xi^2+\eta^2)^{1/2}.
\]
Set
\[
 \norm{G}_{\A^{-S}_{\R^2,t}}
 :=\int_{\R^2}\langle k\rangle^{-S}
 \norm{\widehat G(\cdot,k)}_{C_t}\,dk.
\]

\begin{definition}[Whole-space nonlinear product]
\label{def:Euclidean-product}
Fix $r\in\{2,3\}$ and $S>0$.  For
$u_1,\ldots,u_r\in C_tL^1(\R^2)$, set
\begin{equation}\label{eq:Euclidean-Q}
 \operatorname{AF}^{\partial_x}_{S,r}(u_1,\ldots,u_r)
 :=2\pi\int_{(\R^2)^r}\abs{\xi_1+\cdots+\xi_r}
 \langle k_1+\cdots+k_r\rangle^{-S}
 \norm{\prod_{j=1}^r\widehat u_j(\cdot,k_j)}_{C_t}
 \prod_{j=1}^r dk_j.
\end{equation}
We abbreviate the diagonal value by
$\operatorname{AF}^{\partial_x}_{S,r}(u)$ and, when $r=2$, write
$\operatorname{AF}^{\partial_x}_S(u,v)$ and
$\operatorname{AF}^{\partial_x}_S(u)$.
If this quantity is finite, define
\begin{equation}\label{eq:Euclidean-nonlinearity}
 \widehat{\partial_x[u^r]}(t,k)
 =2\pi i\xi\int_{(\R^2)^{r-1}}
 \prod_{j=1}^{r-1}\widehat u(t,k_j)
 \widehat u\left(t,k-\sum_{j=1}^{r-1}k_j\right)
 \prod_{j=1}^{r-1}dk_j,
 \qquad\text{for a.e. }k\in\R^2.
\end{equation}
Tonelli's theorem and dominated convergence give
$\partial_x[u^r]\in\A^{-S}_{\R^2,t}$.
\end{definition}

\begin{definition}[Whole-space weak singular solution]
\label{def:Euclidean-solution}
Fix $r\in\{2,3\}$, $d\in\{3,5\}$, and $\kappa\in\{-1,1\}$.  Let
$u_{\mathrm{in}}\in L^1(\R^2)$.  A real $u\in C_tL^1(\R^2)$ is a weak singular solution of
\eqref{eq:KP-family} with a fixed $x$-frequency gap if, for some
$c_0,S>0$,
\begin{enumerate}[label=\textup{(\roman*)}]
 \item $\widehat u(t,\xi,\eta)=0$ whenever $\abs\xi<c_0$;
 \item $\operatorname{AF}^{\partial_x}_{S,r}(u)<\infty$;
 \item for every $\phi\in C_c^\infty([0,1)\times\R^2)$,
 \begin{equation}\label{eq:Euclidean-weak-form}
  \int_0^1\Bigl[
   \ip{u}{\partial_t\phi+(-1)^{(d+1)/2}\partial_x^d\phi}
   -\kappa\ip{\partial_x^{-1}\partial_y^2u}{\phi}
   -\ip{\partial_x[u^r]}{\phi}
  \Bigr]dt
  =-\ip{u_{\mathrm{in}}}{\phi(0)}.
 \end{equation}
\end{enumerate}
Here
\[
 \widehat{\partial_x^{-1}\partial_y^2u}
 =2\pi i\frac{\eta^2}{\xi}\widehat u.
\]
\end{definition}

\begin{theorem}[Whole-space KP and modified KP nonuniqueness]
\label{thm:Euclidean}
Fix $r\in\{2,3\}$, $d\in\{3,5\}$, $\kappa\in\{-1,1\}$, and
$S>d+1$.  Let $J\subset(0,1)$ be a nonempty open interval and let
$c_0>0$.  There are infinitely many weak singular solutions of
\eqref{eq:KP-family} such that
\begin{equation}\label{eq:Euclidean-regularity}
 \begin{aligned}
 &u(0)=0,\qquad \supp_tu\Subset J,\qquad \partial_yu\not\equiv0,\\
 &u\in\bigcap_{1\leq p<r}C_tL^p(\R^2)
 \cap\bigcap_{\alpha<1-\frac2r}
 C_t\bigl(H^{\alpha,0}(\R^2)\cap H^\alpha(\R^2)\bigr),\\
 &\operatorname{AF}^{\partial_x}_{S,r}(u)<\infty,
 \end{aligned}
\end{equation}
and
\begin{equation}\label{eq:Euclidean-gap}
 \widehat u(t,\xi,\eta)=0\qquad\text{whenever }\abs\xi<c_0.
\end{equation}
When $r=3$, there are also infinitely many weak singular
solutions satisfying \eqref{eq:Euclidean-gap} and
\begin{equation}\label{eq:Euclidean-parabolic-regularity}
 \begin{aligned}
 &u(0)=0,\qquad \supp_tu\Subset J,\qquad \partial_yu\not\equiv0,\\
 &u\in\bigcap_{1\leq p<3}C_tL^p(\R^2)
 \cap\bigcap_{s<1/2}C_tH^{s,0}(\R^2)
 \cap\bigcap_{\alpha<1/4}C_tH^\alpha(\R^2),\\
 &\operatorname{AF}^{\partial_x}_{S,3}(u)<\infty,\\
 &\supp\widehat u\subset
 \{(\xi,\eta):\abs\eta\leq C\abs\xi^2\}
 \end{aligned}
\end{equation}
for some $C>0$.
\end{theorem}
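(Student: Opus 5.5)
The plan is to transfer the periodic construction of Theorem~\ref{thm:periodic} to $\R^2$ by rescaling and localizing in frequency, and to do so without running a second convex-integration scheme. The key structural fact is that the solutions constructed on $\T^2$ satisfy two conditions. They have zero $x$-mean, and their absolutely convergent products $\mathbb P_{x\neq0}(u^r)$ obey the weak form \eqref{eq:weakform}. I will therefore first rescale the periodic solution so that its support lives at large $x$-frequencies. Then I will replace the lattice Fourier series by a smeared Fourier transform. The smearing is chosen so that the whole-space quantity $\operatorname{AF}^{\partial_x}_{S,r}$ is controlled by the periodic $\operatorname{AF}^x_{S,r}$.

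\textbf{Scaling.} For $\lambda\ge1$, the map $u\mapsto \lambda^{a}u(\lambda^{d}t,\lambda x,\lambda^{b}y)$ preserves \eqref{eq:KP-family}. Here $b=(d+1)/2$, and the exponent $a$ is fixed by balancing the nonlinearity, $a(r-1)=d-1$. A $\lambda$-periodic solution then has $x$-frequencies in $\lambda^{-1}\Z\setminus\{0\}$. To obtain a uniform gap $c_0$, I will instead use directly the periodic family on a large torus $(\R/L\Z)^2$. I will arrange that its $x$-frequencies satisfy $|n|\ge c_0L$, which is possible because the periodic building blocks can be placed at frequencies $|n|\ge N_0$. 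Time support in $J$ is kept by adjusting the scaling, since time rescaling maps a subinterval into $J$.

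\textbf{Localization.} Next I pass from the torus to the plane. Let $\chi\in C_c^\infty$ have $\widehat\chi$ supported in a ball of radius $\delta\ll c_0$, and set $U=\chi(x,y)\,u$. Because multiplying a periodic function by $\chi$ does not commute with the nonlinearity, this alone does not give a solution. I will therefore proceed differently: rerun the periodic construction, with each plane-wave building block $e^{2\pi i k\cdot x}$ replaced by the wave packet $\widehat\psi(\cdot-k)$. Every algebraic cancellation of the scheme is exact at the level of frequency supports, namely resonance and the $x$-mean condition. These cancellations persist for packets because supports add, $\supp(\widehat\psi*\widehat\psi)(\cdot-k_1-k_2)$. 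The fixed gap \eqref{eq:Euclidean-gap} follows once all $|n|\ge c_0+r\delta$. The $L^p$ and $H^\alpha$ bounds transfer because a packet has the same norms as a plane wave, up to the factor $\norm{\psi}_{L^p}$.

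\textbf{Finiteness and the parabolic family.} $\operatorname{AF}^{\partial_x}_{S,r}$ is the periodic sum with weight $|\xi|\langle k\rangle^{-S}$. Since $|\xi|\le\langle k\rangle$, it is bounded by $\operatorname{AF}^x_{S-1,r}$, which is finite provided $S-1>d$. For $r=3$ the parabolic constraint $|\eta|\le C|\xi|^2$ survives the $\delta$-smearing because $|\xi|\ge c_0$. \textbf{Main obstacle:} the error terms of the scheme involve the dispersion relation $\omega(\xi,\eta)=\xi^d\pm\eta^2/\xi$ evaluated inside each packet. Unlike on the lattice, this phase is no longer exactly resonant, since it varies over the packet. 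My first attempt will be to absorb this variation into a slowly varying time amplitude. This needs $|\nabla\omega|\delta\ll1$ on the support, so I will choose $\delta$ depending on the frequency scale of each stage, making it summable, so that $u\in C_tL^1$ remains true.
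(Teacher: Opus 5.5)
\textbf{There is a genuine gap.} It sits where the whole-space problem differs from the periodic one.

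\textbf{The cancellation of $E$ is not a support statement.} Once you rerun the scheme with packets, you claim every cancellation is ``exact at the level of frequency supports.'' That fails for the cancellation that drives the iteration. The zero-profile-frequency part of $w^r$ must cancel $E$, and this rests on the pointwise identity $b^r\approx A-E$. On $\R^2$ the amplitude $(A-E)^{1/r}$ tends to $A^{1/r}$ at infinity. After one step $E$ is no longer a trigonometric polynomial, so ``replacing plane waves by packets'' does not even define the amplitude. It must therefore be localized, $a_L=\chi_L(A-E)^{1/r}$. It must also be frequency-truncated, so that the modulated perturbation keeps a genuine gap from $\{\xi=0\}$. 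Then $E+w^r$ retains $h^r\bigl((1-\chi_L^r)E+A\chi_L^r\bigr)$. On $\T^2$ the corresponding term $h^rA$ is a zero $x$-mode killed by $\mathbb P_{x\neq0}$. On $\R^2$ there is no such projection, and $A\chi_L^r$ has size $A$ near $\xi=0$. The paper controls it only because errors are measured in the differentiated norm $\norm{\partial_xE}_{\A^{-S}_{\R^2,t}}$. Since $\widehat{\chi_L}\geq0$ has mass one, the $r$-fold convolution of $\widehat{\chi_L}$ has first moment $O(1/L)$. Hence these terms are $O((A+C_E)/L)$, which forces the parameter order $A\to L\to\ell\to\la$.

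\textbf{Your finiteness argument for $\operatorname{AF}^{\partial_x}_{S,r}$ fails for the same reason.} The whole-space integral contains outputs with $\abs\xi$ small but nonzero. The periodic sum $\operatorname{AF}^x_{S-1,r}$ omits them, yet they carry the $A\chi_L^r$ contribution. Bounding $\abs\xi\leq\langle k\rangle$ discards the only source of smallness. Since $A$ must grow along the iteration, the per-step bound $\operatorname{AF}^{\partial_x}_{S,r}(u_{q+1})\leq\operatorname{AF}^{\partial_x}_{S,r}(u_q)+2\delta_q$ is then lost. The paper instead proves $\operatorname{AF}^{\partial_x}_S(ha_{L,\ell})\leq\norm{\partial_xE}_{\A^{-S}_{\R^2,t}}+C(A/L+1/L+1/A)$ (and its cubic analogue). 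This uses the exact total coefficient one of the terms linear in $E$ and the Lipschitz bound for $\abs\xi\langle k\rangle^{-S}$. Even taken at face value, your index shift needs the periodic construction run with $S-1>d+1$, i.e.\ $S>d+2$.

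\textbf{The obstacle you single out is not one.} The scheme never uses resonance or the dispersion relation. The dispersion terms $\partial_x^dw$ and $\kappa\partial_x^{-1}\partial_y^2w$ are simply errors. They are small in $\A^{-S}_{\R^2,t}$ because $S>d+1$, the gap controls $\eta^2/\abs\xi$ on the perturbation support, and $\norm w_{C_tL^1}\to0$.

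\textbf{Your proposed remedy would break the construction.} Requiring $\abs{\nabla\omega}\delta\ll1$, with $\abs{\nabla\omega}\gtrsim\la^{d-1}$ on the bulk of the support, forces packets of spatial extent $\delta^{-1}\gtrsim\la^{d-1}$. Then $\norm w_{C_tL^1}$ is of order $A^{1/2}\delta^{-2}\norm{\rho}_{L^1}$. In the quadratic case this is at least of order $\la^{2(d-1)-(1-\eps)/2}\to\infty$. That destroys the $L^p$ smallness of the increments and every $L^1$-based error bound; shrinking $\delta$ widens the packets and hurts $C_tL^1$ rather than helping it. The localization scale must be fixed before $\la$. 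Only then does the decoupling inequality $\norm{aU}_{L^p(\R^2)}\lesssim\norm a_{W^{3,p}}\norm U_{L^p(\T^2)}$ have a constant depending on $L$ and $\ell$ but not on $\la$.
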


The solutions in both theorems may be chosen arbitrarily small in any
fixed finite collection of these norms.

For $r=2$ and $d=5$, both theorems also apply to
\eqref{eq:fifth-order-divided}.

For the cubic equations, the Sobolev embedding
$H^{1/3}\hookrightarrow L^3$ holds on both domains.
If, on either domain, the Fourier support of $f$ satisfies
$\abs{k_y}\lesssim\abs{k_x}^2$ and $P_N^x$ denotes a dyadic
$x$-frequency projection, then dyadic Bernstein and Littlewood--Paley
theory give
\begin{equation}\label{eq:parabolic-product-endpoint}
 \norm f_{L^3}\lesssim
 \left(\sum_N\norm{P_N^xf}_{L^3}^2\right)^{1/2}
 \lesssim
 \left(\sum_N N\norm{P_N^xf}_2^2\right)^{1/2}
 \lesssim\norm f_{H^{1/2,0}}.
\end{equation}
Both exponents are sharp.  Indeed, the profiles in
Lemma~\ref{lem:cubic-profile} have $L^3$ norm at least one by
\eqref{eq:cubic-profile-moment}, while, after choosing $\eps>0$
sufficiently small,
\[
 \norm{\rho_{3,\la,\eps}}_{H^\alpha}\longrightarrow0
 \quad(\alpha<1/3),
 \qquad
 \norm{\widetilde\rho_{3,\la,\eps}}_{H^{s,0}}\longrightarrow0
 \quad(s<1/2)
\]
by \eqref{eq:cubic-profile-support-bounds} and
\eqref{eq:cubic-profile-endpoints}.
The corresponding isotropic and parabolic dilations give the same
optimality on $\R^2$.
For third-order modified KP
on $\R^2$, the scaling
\[
 u_\Lambda(t,x,y)=\Lambda u(\Lambda^3t,\Lambda x,\Lambda^2y),
 \qquad
 \norm{u_\Lambda(t)}_{\dot H^{s,0}}
 =\Lambda^{s-1/2}\norm{u(\Lambda^3t)}_{\dot H^{s,0}},
\]
also identifies $s=1/2$ as the critical exponent in $H^{s,0}$.  In the
whole-space fifth-order quadratic case,
Patterson's unconditional uniqueness for $s>0$~\cite{Patterson5KP} and
Theorem~\ref{thm:Euclidean} for $s<0$ leave only the endpoint $s=0$;
hence the nonuniqueness range is almost sharp.

For a time-independent distribution $u$, define
$\operatorname{AF}_S^x(u)$ and $\mathbb P_{x\neq0}(u^2)$ as in
Definition~\ref{def:Q}, with
$\norm{\widehat u(\cdot,k)}_{C_t}$ replaced by $\abs{\widehat u(k)}$.

\begin{definition}[Stationary weak singular solution]
\label{def:stationary-solution}
Fix $d\in\{3,5\}$ and $\kappa\in\{-1,1\}$.  A real distribution $u$ on
$\T^2$ is a stationary weak singular solution if
$\mathbb P_{x\neq0}u=u$, $\operatorname{AF}_S^x(u)<\infty$ for some
$S>0$, and
\begin{equation}\label{eq:stationary-weak}
 \ip{u}{(-1)^{(d+1)/2}\partial_x^d\phi
      +\kappa\partial_x^{-1}\partial_y^2\phi}
 +\ip{\mathbb P_{x\neq0}(u^2)}{\partial_x\phi}=0
 \qquad\text{for every }\phi\in C^\infty(\T^2).
\end{equation}
\end{definition}

\begin{theorem}[Periodic stationary solutions]
\label{thm:stationary-flexibility}
Fix $d\in\{3,5\}$, $\kappa\in\{-1,1\}$, and $S>d+1$.  There are
infinitely many stationary weak singular solutions depending nontrivially
on $y$ such that
\[
 u\in\bigcap_{1\leq p<2}L^p(\T^2)
    \cap\bigcap_{\sigma>0}
       \bigl(H^{-\sigma,0}(\T^2)\cap H^{-\sigma}(\T^2)\bigr),
 \qquad
 \operatorname{AF}_S^x(u)<\infty.
\]
\end{theorem}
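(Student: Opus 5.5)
Our plan is a convex-integration-free, explicit Fourier-side construction that exploits exact resonances of the stationary symbol. For a stationary $u$ with $\widehat u$ supported on $n\neq0$, the weak form \eqref{eq:stationary-weak} says that for every $k=(n,m)$ with $n\neq0$,
\[
 \Bigl((-1)^{(d+1)/2}(2\pi i n)^d+\kappa\frac{(2\pi i m)^2}{2\pi i n}\Bigr)\widehat u(k)
 +2\pi i n\,\widehat{\mathbb P_{x\neq0}(u^2)}(k)=0 .
\]
Write this as $\omega(k)\widehat u(k)=n\widehat{u^2}(k)$ up to constants. Here $\omega(n,m)=c_d n^{d+1}-\kappa m^2$ after multiplying by $n$, and the signs are fixed by $d$ and $\kappa$.

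We build $u=\sum_{j}a_j(\cos(2\pi k_j\cdot x)+\ldots)$ as a sparse lacunary sum of real modes $\pm k_j$ lying on the resonant set $\{\omega(k)=0\}$. For KP-I with $d=3$ this set is $m^2=\text{const}\cdot n^4$, so $m=\pm c n^2$, and integer points exist after rescaling constants $2\pi$ appropriately. If that rescaling fails, we instead choose modes on which $\omega$ is small and absorb the error. For KP-II the resonant set is empty, $\omega\neq0$, so there the linear part must balance the quadratic part. For KP-II we therefore use the following bootstrapping ansatz: each mode $k_j$ is produced as the interaction $k_j=k_{j+1}-k'_{j+1}$ of two higher-frequency modes of the next generation. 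We then solve $\omega(k_j)\widehat u(k_j)=n_j\sum_{k_1+k_2=k_j}\widehat u(k_1)\widehat u(k_2)$ generation by generation. The amplitudes are fixed by a recursion $a_{j+1}^2\sim |\omega(k_j)|a_j/|n_j|$, in the spirit of the intermittent profiles of Lemma~\ref{lem:cubic-profile}.

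The key steps, in order, are as follows.
\begin{enumerate}
\item Choose lacunary frequencies so that the only nonzero interactions $k_1+k_2$ with $(k_1+k_2)_x\neq0$ are the designed ones. Every other product either lands on the excluded line $n=0$ (this handles $k+(-k)$) or lands on a new mode of the next generation that is itself balanced. Lacunarity makes the interaction pattern a tree.
\item Solve the amplitude recursion. Use several modes per generation, in the spirit of intermittent bump profiles, so that $\sum_j a_j^2=\infty$ while $\|u\|_{L^p}<\infty$ for $p<2$. This means taking concentrated profiles whose $L^1$-type norms decay. The bound $\sum|a_j|\langle k_j\rangle^{-\sigma}<\infty$ then gives $H^{-\sigma}$ and $H^{-\sigma,0}$ membership and $\operatorname{AF}^x_S<\infty$ for $S>d+1$. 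Here the factor $n$ together with $\omega\lesssim\langle k\rangle^{d+1}$ controls growth.
\item Verify \eqref{eq:stationary-weak} coefficientwise using absolute convergence, which is guaranteed by \eqref{eq:Nbound}.
\item Obtain infinitely many solutions and nontrivial $y$-dependence by varying the initial generation and by taking $m_j\neq0$.
\end{enumerate}

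The main obstacle is step 2 combined with the closure of the interaction set. Every product generated must be either cancelled or balanced, while the amplitudes must still be summable against $\langle k\rangle^{-S}$ and in $L^p$ for $p<2$. Our first attempt is to use the same profile building blocks as in the time-dependent construction, namely the $r=2$ analogues of $\rho_{2,\lambda,\varepsilon}$, with the time cutoff removed. We would place them at frequencies where $\omega$ has a prescribed size, and then absorb the residual self-interactions into the next generation via a Nash-type iteration with frequencies $\lambda_{q+1}\gg\lambda_q$.
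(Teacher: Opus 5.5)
Your main construction has genuine gaps, and the one sentence that points toward a workable route omits the mechanism that makes it work. First, you have the two resonance cases interchanged. For $n\neq0$ the stationary equation reads $\Omega(n,m)\widehat u(n,m)=\widehat{\mathbb P_{x\neq0}(u^2)}(n,m)$ with $\Omega(n,m)=(2\pi n)^{d-1}-\kappa m^2/n^2$. For KP-I ($\kappa=-1$) one has $\Omega\geq(2\pi\abs n)^{d-1}$. So there are no resonances, and $\Omega$ cannot be made small; this coercivity is exactly what drives Theorem~\ref{thm:stationary-rigidity}. For KP-II ($\kappa=1$), $\Omega$ vanishes only where $\abs m=(2\pi)^{(d-1)/2}\abs n^{(d+1)/2}$. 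That curve has no lattice points, because $(2\pi)^{(d-1)/2}\in\{2\pi,4\pi^2\}$ is irrational.

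Second, the interaction set does not close into a tree. For $u=\sum_ja_j\cos(2\pi k_j\cdot(x,y))$, only $k_j-k_j$ lands on $n=0$. The frequencies $2k_j$ and $k_i\pm k_j$ all have nonzero $x$-frequency, so each forces a new mode, whose own interactions force further modes. Your recursion is therefore the full fixed-point problem $\widehat u=\Omega^{-1}\widehat{\mathbb P_{x\neq0}(u^2)}$, which the proposal never solves.

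Third, a Hadamard-lacunary series of single modes that lies in $L^1$ already has square-summable coefficients (Zygmund's theorem in $x$, for a.e.\ $y$). So $u\in L^p$, $p<2$, with $\sum a_j^2=\infty$ forces intermittent blocks. Their self-interaction $\rho^2-1$ is spread over $\mu\Z\setminus\{0\}$ and would again have to be balanced exactly.

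Fourth, $\operatorname{AF}^x_S(u)<\infty$ does not follow from $\sum_j\abs{a_j}\langle k_j\rangle^{-\sigma}<\infty$, because the weight sits on the output $k_1+k_2$. For instance, if $\widehat u$ charges both $k_j$ and $(1,0)-k_j$ with size $\abs{a_j}$, these pairs alone contribute $\gtrsim\sum_ja_j^2$, however large $\abs{k_j}$ is. That sum is infinite in exactly the singular regime you want.

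Your closing sentence is, in outline, the paper's proof: the periodic quadratic step with $h\equiv1$ and no temporal error. The substance is in what you leave out.

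\begin{itemize}
\item One iterates the relaxed equation $(-1)^{(d+1)/2}\partial_x^du+\kappa\partial_x^{-1}\partial_y^2u+\partial_x(u^2)=\partial_xE$. The perturbation is $w=b\rho_{\la,\eps}$ with $b=P^{x,y}_{\leq\mu^{1/2}}a$ and $a=(A-E)^{1/2}$. Then $\mathbb P_{x\neq0}(E+w^2)=\mathbb P_{x\neq0}\bigl((b^2-a^2)+b^2(\rho_{\la,\eps}^2-1)\bigr)$.
\item In that identity, the unit mean of $\rho_{\la,\eps}^2$ turns the low-frequency part of $w^2$ into $b^2\approx A-E$, which cancels the old error. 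The first remaining term is an amplitude tail, and the second lives at $\abs n\geq\mu/2$. Both are small by \eqref{eq:amplitude-negative-tail} and \eqref{eq:gapL1}.
\item The Nash and dispersion errors are bounded by $\norm w_{L^1}\lesssim\la^{-(1-\eps)/2}$, using only $S>d+1$. So there is no need, and for KP-I no way, to place profiles where the symbol has a prescribed size.
\item The absolute Fourier bound is Proposition~\ref{prop:periodic-absolute-summability}. Among terms with zero total profile frequency, the constant part $A^{1/2}$ of the amplitude pairs with itself only at the excluded frequency $n=0$. The terms linear in $E$ contribute exactly $\norm E_{\A^{-S}_{x\neq0}}$, since $\sum_j\abs{c_jc_{-j}}=\sum_jc_j^2=1$. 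Everything else is $O(A^{-1}+A\mu^{-S})$ plus a frequency-separated mixed term. This gives $\operatorname{AF}^x_S(u_{q+1})\leq\operatorname{AF}^x_S(u_q)+2\delta_q$.
\item The limit equation follows from $E_q\to0$, the identity $P^{x,y}_{\leq K_q}u=u_q$, and Proposition~\ref{prop:cutoff}.
\item Infinitely many solutions come from varying $\gamma\in[1,2]$, and $y$-dependence from the untouched mode $(n_*,m_*)$.
\end{itemize}
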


In the $L^2$ class, the stationary KP-I symbol is coercive, and the Fourier
equation yields a regularity bootstrap.

\begin{theorem}[Regularity of stationary KP-I solutions]
\label{thm:stationary-rigidity}
Fix $d\in\{3,5\}$.  Let $u\in L^2(\T^2)$ be real,
$\mathbb P_{x\neq0}u=u$, and suppose that
\[
 (-1)^{(d+1)/2}\partial_x^du
 -\partial_x^{-1}\partial_y^2u+\partial_x(u^2)=0
 \quad\text{in }\mathcal D'(\T^2),
\]
where $u^2$ is the ordinary $L^1$ product.  Then
$u\in C^\infty(\T^2)$.
\end{theorem}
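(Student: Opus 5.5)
We work on the Fourier side.  Write $\widehat u(n,m)$, with $n\neq0$, and set $\omega(n,m)=(2\pi)^d i^{d}(-1)^{(d+1)/2}n^d$, so that $(-1)^{(d+1)/2}\partial_x^d$ has symbol $(-1)^{(d+1)/2}(2\pi in)^d$.  For $d=3$ this is $(2\pi)^3 i n^3$, and for $d=5$ it is $-(2\pi)^5 i n^5\cdot(-1)$; in both cases the symbol equals $i(2\pi)^d n^d$ up to a real sign that is the same for all $n$.  Likewise $-\partial_x^{-1}\partial_y^2$ has symbol $-(2\pi i m)^2/(2\pi i n)=-i\,2\pi m^2/n$.

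Dividing the equation by $2\pi i n$ gives, for each $n\neq0$,
\[
 \Bigl(\pm(2\pi n)^{d-1}+\frac{m^2}{n^2}\Bigr)\widehat u(n,m)
 =-\widehat{u^2}(n,m)
\]
up to harmless constants.  The sign $\pm$ has to be checked: it should be the one that makes KP-I coercive.  For $d=3$, multiplying $\partial_x^3$ by $1/\partial_x$ gives $\partial_x^2$, with symbol $-(2\pi n)^2$.  Combined with $-\partial_x^{-2}\partial_y^2$, whose symbol is $-m^2/n^2$, the total symbol is
\[
 L(n,m)=-\bigl((2\pi n)^2+m^2/n^2\bigr),
\]
which has a definite sign.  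For $d=5$ the factor $-\partial_x^4$ has symbol $-(2\pi n)^4$, so again both terms have the same sign.  Coercivity then gives
\[
 \abs{L(n,m)}\gtrsim n^{d-1}+m^2/n^2\gtrsim \abs n^{d-1}+\abs m^{\frac{2(d-1)}{d+1}},
\]
where the last step optimizes over $n$ (for $d=3$, $n^2+m^2/n^2\ge 2\abs m$).  So $\abs{L}\gtrsim\langle k\rangle^{\beta}$ with $\beta=\min(d-1,2(d-1)/(d+1))=1$ for $d=3$ and $4/3$ for $d=5$.  The equation now reads $\widehat u=-L^{-1}\,\widehat{u^2}$ on $n\neq0$, since $\mathbb P_{x\neq0}u=u$.

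The bootstrap runs as follows.  Since $u\in L^2$, we have $u^2\in L^1$, so $\widehat{u^2}\in\ell^\infty$ and $\widehat u(n,m)=O(\abs{L}^{-1})$.  The gain $\abs L^{-1}$ is anisotropic; we keep the sharp weight $w(n,m)=n^{d-1}+m^2/n^2$ rather than only $\langle k\rangle^\beta$.  To start, we show $u\in H^{s}$ for some $s>0$ by duality and interpolation: $u^2\in L^1\subset H^{-1-\epsilon}$ and $u\in L^2$, so $u=L^{-1}(u^2)\in H^{\beta-1-\epsilon}$ isotropically.  For $d=3$ this gains nothing.

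We therefore use a sharper first step, the anisotropic $\ell^\infty$ bound:
\[
 \sum_{n\neq0,m}\abs{\widehat u}^2\langle k\rangle^{2s}
 \lesssim\norm{u^2}_{L^1}^2\sum_{n,m}\frac{\langle k\rangle^{2s}}{(n^2+m^2/n^2)^2}.
\]
Summing over $m$ for fixed $n$ gives roughly $n^{-4}\cdot n\cdot(\text{weight})$, so with the weight at $s=0$ the sum is $\sum_n n^{-3}<\infty$.  With a small power $s$ it stays finite as long as $\langle k\rangle^{2s}\le (n^2+\abs m)^{2s}$ is dominated.  Restricting to $\abs m\lesssim n^2$, where most mass sits, the sum is about $\sum_n n^{-3+4s}$, which converges for $s<1/2$.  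The region $\abs m\gg n^2$ gives $\sum_n n^4\sum_{m}m^{2s-4}$, which is finite.  Hence $u\in H^{s}$ for every $s<1/2$.  Then $u\in L^{p}$ for $p<4$, so $u^2\in L^{q}$ with $q<2$ close to $2$.

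Iterating, $u\in H^s$ gives $u^2\in H^{s'}$ for a suitable $s'$: by Sobolev product estimates, $H^s\cdot H^s\subset H^{2s-1}$ when $0<s<1$, and $H^s$ is an algebra for $s>1$.  Then $u=L^{-1}\mathbb P_{x\neq0}(u^2)$ lies in $H^{s'+\beta}$.  For $d=3$ the increment is $2s-1+1-s=s>0$, so $s\mapsto 2s$, which doubles until $s>1$; after that each step gains $\beta\ge1$.  By induction $u\in H^s$ for all $s$, hence $u\in C^\infty$.

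The main obstacle is the initial step out of $L^2$.  It requires the anisotropic counting above and an honest check of the sign convention, so that the symbol is truly coercive for $\kappa=-1$ and both $d$.  It also requires that the distributional equation in the statement matches the coefficientwise identity: we test against exponentials $e^{-2\pi i(nx+my)}$, and the $n=0$ modes vanish consistently because $\partial_x(u^2)$ has no $n=0$ component.
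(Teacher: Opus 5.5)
Your argument is correct in outline: the Fourier identity, the coercivity bound $|L|\gtrsim\langle k\rangle^{\beta}$, and the Sobolev doubling bootstrap all work. The lattice count in your ``sharper first step'' is wrong, though.

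For fixed $n\neq0$ there are about $n^2$ integers $m$ with $|m|\lesssim n^2$, not about $n$. Each contributes about $n^{-4}\langle k\rangle^{2s}\lesssim n^{4s-4}$, so this region gives $\sum_n n^{4s-2}$. That sum is finite only for $s<1/4$.

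The region $|m|\gg n^2$ has the same problem as written. The expression $\sum_n n^4\sum_m m^{2s-4}$ diverges. The inner sum must run over $|m|\gtrsim n^2$ only, which again gives $\sum_n n^{4s-2}$.

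So for $d=3$ this step yields $u\in H^s$ for $s<1/4$, hence $u\in L^p$ for $p<8/3$, not $p<4$. This is sharp for an $\ell^\infty$-based bound, because the region $|m|\sim n^2$ saturates it. It does no harm to the rest of your proof: the doubling map $s\mapsto 2s$ ($d=3$) or $s\mapsto 2s+1/3$ ($d=5$, where your duality step already gives $s<1/3$) only needs some $s>0$ to start. Your opening aside on the symbol also has a slip, since $i^3=-i$. Your subsequent direct computation $L=-\bigl((2\pi n)^{d-1}+m^2/n^2\bigr)$ is right and matches the identity the paper uses.

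The paper uses the same Fourier identity and coercivity but bootstraps in Fourier--Lebesgue spaces rather than Sobolev spaces. Its key input is $\sum_{n\neq0,m}\bigl((2\pi n)^{d-1}+m^2/n^2\bigr)^{-q}<\infty$ for $q>3/2$. With Hölder and Hausdorff--Young this gives, in turn:
\begin{itemize}
 \item $\widehat u\in\ell^{8/5}$, so $u\in L^{8/3}$;
 \item $\widehat u\in\ell^{4/3}$, so $u\in L^4$;
 \item $\widehat u\in\ell^1$, so $u\in L^\infty$.
\end{itemize}
After that, $|L|\gtrsim\langle k\rangle$ gives $H^1$, then $H^1\cap L^\infty$ gives $H^2$, and the algebra property finishes.

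Your route replaces this with one weighted-$\ell^2$ estimate plus the fractional product law $H^s\cdot H^s\subset H^{2s-1}$ on $\mathbb{T}^2$ for $0<s<1$. That gives a uniform iteration but relies on Sobolev embedding and its dual. The paper needs only Hölder, Hausdorff--Young and the anisotropic reciprocal sum, and reaching $L^\infty$ early makes the step to $H^2$ immediate.

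Your corrected threshold is exactly the paper's condition. Since $\langle k\rangle\lesssim|L|$, one has $|L|^{-2}\langle k\rangle^{2s}\lesssim|L|^{-(2-2s)}$, and $2-2s>3/2$ is $s<1/4$. Likewise $H^{1/4-}\subset L^{8/3-}$ matches the paper's first Hausdorff--Young step.
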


Thus $L^2$ is the sharp regularity threshold for periodic stationary KP-I
for both dispersion orders.
For $d=5$, both stationary theorems also apply to
\eqref{eq:fifth-order-divided}.

\subsection{Cubic profiles and transverse frequencies}

The cubic regularity comes from concentrating each perturbation
in both spatial variables.  The profiles are products of two Fej\'er
kernels, shifted to $x$-frequency $\la$ and placed on a Fourier
lattice of spacing $\la^\eps$.  When both kernels have order
$\la^{1-\eps}$, the unnormalized cubic moment is comparable to
$\la^{4(1-\eps)}$ and the $L^2$ norm is bounded by a constant times
$\la^{1-\eps}$.  Normalizing the cubic moment to one therefore gives
$L^2$ size $O(\la^{-(1-\eps)/3})$.  The $x$- and $y$-frequencies are
bounded by constant multiples of $\la$, so the
resulting perturbation has $H^\alpha$ size
\[
 O\bigl(\la^{\alpha-(1-\eps)/3}\bigr).
\]
This yields the isotropic range $\alpha<1/3$.

For the parabolic family, the $x$-kernel still has order
$\la^{1-\eps}$, while the $y$-kernel has order
$\la^{2(1-\eps)}$.  The unnormalized cubic moment is then comparable to
$\la^{6(1-\eps)}$ and the $L^2$ norm is bounded by a constant times
$\la^{3(1-\eps)/2}$.  Cubic normalization gives $L^2$ size
$O(\la^{-(1-\eps)/2})$.  The $x$-frequencies remain comparable
to $\la$, whereas the transverse frequencies are bounded by
$\la^{2-\eps}$.  Since $H^{s,0}$ weights only the $x$-frequency,
the perturbation has $H^{s,0}$ size
\[
 O\bigl(\la^{s-(1-\eps)/2}\bigr),
\]
which yields $s<1/2$.
The parabolic support also gives $H^\alpha$ for every $\alpha<1/4$.

To obtain nontrivial $y$-dependence on $\T^2$, fix
$(n_*,m_*)\in\Z^2$ with $n_*m_*\neq0$.  The subsequent perturbations
avoid this mode, so, for every $q$,
\[
 \widehat u_q(t,n_*,m_*)
 =\widehat u_0(t,n_*,m_*)\not\equiv0.
\]
On $\R^2$, the subsequent perturbations leave $\widehat u_0$ unchanged
on an open set disjoint from $\{\eta=0\}$.  In either domain these
frequencies have nonzero $y$-frequency, so the limit depends on $y$.

The multipliers $\partial_x^{-1}\partial_y^2$ and
$\partial_x^{-2}\partial_y^2$ constrain the $y$-frequencies.  The absolute
values of their symbols are comparable to $\eta^2/\abs\xi$ and
$\eta^2/\xi^2$, respectively.  The
isotropic profiles lie in a fixed cone $\abs\eta\lesssim\abs\xi$,
whereas the parabolic profiles lie in the region
$\abs\eta\lesssim\abs\xi^2$.  On $\R^2$, spatial
localization is followed by Fourier truncation below the spacing of the
profile lattice; modulation then produces a fixed gap from
$\{\xi=0\}$.

On the parabolic profile support, $\abs\xi\simeq\la$ and
$\abs\eta\lesssim\la^2$, so the multiplier in the differentiated
dispersion error satisfies
\[
 \left(\abs\xi^d+\frac{\eta^2}{\abs\xi}\right)
 \langle(\xi,\eta)\rangle^{-S}
 \lesssim\la^{d-S}+\la^{1-S}.
\]
The $\ell^1$ norm of the Fourier coefficients of the profile is
$O(\la^{1-\eps})$.  Thus the whole-space dispersion error is
$O(\la^{d+1-S-\eps})$, which tends to zero for $S>d+1$.

\section{Fourier estimates and intermittent profiles}

\subsection{Fourier cutoffs and weighted Wiener estimates}

Let $P_{\leq L}^{x,y}$ be a Fourier multiplier with a smooth, real, even
symbol supported in $\{\langle(n,m)\rangle\leq2L\}$ and equal to one on
$\{\langle(n,m)\rangle\leq L\}$.  We take the symbol in $[0,1]$.

For amplitudes we use the full inhomogeneous norm, which includes the
$x$-zero modes:
\begin{equation}\label{eq:fullwiener}
    \norm{a}_{\A^s_t}
    =\sum_{n,m\in\Z}\langle(n,m)\rangle^s
      \norm{\widehat a(\cdot,n,m)}_{C_t}.
\end{equation}

\begin{lemma}\label{lem:L1wiener}
Let $F\in C_tL^1(\T^2)$.  If $S>2$, then
\begin{equation}\label{eq:L1Aminus}
    \norm{\mathbb P_{x\neq0} F}_{\A^{-S}_{x\neq0,t}}
    \lesssim_S \norm{F}_{C_tL^1}.
\end{equation}
If a Fourier multiplier $T$ on $n\neq0$ has symbol satisfying
$\abs{m_T(n,m)}\lesssim\langle(n,m)\rangle^a$, then, for $S>a+2$,
\begin{equation}\label{eq:multiplierL1}
    \norm{TF}_{\A^{-S}_{x\neq0,t}}
    \lesssim_{S,a}\norm{F}_{C_tL^1}.
\end{equation}
If in addition, for some $M\geq1$,
$\widehat F(t,n,m)=0$ for every $t$ whenever $\abs n<M$, then
\begin{equation}\label{eq:gapL1}
    \norm{\mathbb P_{x\neq0} F}_{\A^{-S}_{x\neq0,t}}
    \lesssim_S M^{2-S}\norm{F}_{C_tL^1},
    \qquad S>2.
\end{equation}
\end{lemma}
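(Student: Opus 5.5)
The plan is to reduce all three estimates to the trivial pointwise bound on Fourier coefficients and then sum the resulting weights over the lattice. On normalized Haar measure, for every $t$ and every $(n,m)$,
\[
 \abs{\widehat F(t,n,m)}\leq\norm{F(t)}_{L^1},
 \qquad\text{hence}\qquad
 \norm{\widehat F(\cdot,n,m)}_{C_t}\leq\norm{F}_{C_tL^1}.
\]
Continuity in time of $t\mapsto\widehat F(t,n,m)$ follows from $F\in C_tL^1$, since the coefficient map $L^1\to\mathbb C$ is a bounded linear functional. Thus each coefficient entering $\norm{\cdot}_{\A^{-S}_{x\neq0,t}}$ is controlled uniformly by $\norm{F}_{C_tL^1}$.

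For \eqref{eq:L1Aminus}, this bound gives
\[
 \norm{\mathbb P_{x\neq0}F}_{\A^{-S}_{x\neq0,t}}
 \leq\norm{F}_{C_tL^1}\sum_{n\neq0,\,m}\langle(n,m)\rangle^{-S}.
\]
The sum is finite exactly when $S>2$, by comparison with $\int_{\R^2}\langle k\rangle^{-S}\,dk$. For \eqref{eq:multiplierL1}, I would use $\abs{\widehat{TF}(t,n,m)}=\abs{m_T(n,m)}\abs{\widehat F(t,n,m)}\lesssim\langle(n,m)\rangle^a\norm{F}_{C_tL^1}$ in the same way. This leaves $\sum\langle k\rangle^{a-S}$, which converges when $S-a>2$. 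The convention that $T$ acts only on $n\neq0$ means the zero $x$-modes never enter.

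For \eqref{eq:gapL1}, the sum only runs over $\abs n\geq M$, because the other coefficients vanish identically. So it suffices to show
\[
 \sum_{\abs n\geq M}\sum_{m\in\Z}\langle(n,m)\rangle^{-S}\lesssim_S M^{2-S}.
\]
The first step is to sum in $m$ for fixed $n$. Comparing with the integral $\int_{\R}(1+n^2+y^2)^{-S/2}\,dy$ and substituting $y=\langle n\rangle z$ gives $\lesssim\langle n\rangle^{1-S}$; this is where $S>1$ is used. The second step is to sum $\sum_{\abs n\geq M}\abs n^{1-S}\lesssim M^{2-S}$, which needs $S>2$ and $M\geq1$.

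The only point needing care, though it is routine, is that the constants in the two one-dimensional comparisons are uniform in $n$ and $M$. For the $m$-sum this follows from monotonicity of $y\mapsto(1+n^2+y^2)^{-S/2}$ on each half-line, which bounds the sum by the integral plus the maximal term $\langle n\rangle^{-S}\leq\langle n\rangle^{1-S}$. For the $n$-sum the same argument bounds the tail by $M^{1-S}+\int_M^\infty x^{1-S}\,dx\lesssim M^{2-S}$, using $M\geq1$.
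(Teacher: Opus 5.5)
Your proposal is correct and matches the paper's proof: bound each Fourier coefficient by $\norm{F}_{C_tL^1}$, sum $\langle(n,m)\rangle^{a-S}$ over the lattice (with $a=0$ giving \eqref{eq:L1Aminus}), and for the gap estimate sum first in $m$ to obtain $\langle n\rangle^{1-S}$ and then over $\abs n\geq M$ to obtain $M^{2-S}$. Your remarks on time continuity of the coefficients and on the uniformity of the integral-comparison constants fill in details the paper leaves implicit.
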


\begin{proof}
\[
 \norm{TF}_{\A^{-S}_{x\neq0,t}}
 \lesssim \norm{F}_{C_tL^1}
    \sum_{n\neq0,m\in\Z}\langle(n,m)\rangle^{a-S},
\]
which converges precisely when $S>a+2$.  The proof of
\eqref{eq:L1Aminus} is the case $a=0$.  For the last estimate,
\[
 \sum_{\abs n\geq M}\sum_{m\in\Z}\langle(n,m)\rangle^{-S}
 \lesssim_S\sum_{\abs n\geq M}\langle n\rangle^{1-S}
 \lesssim_S M^{2-S}.
\]
\end{proof}

\subsection{Cutoff independence for the periodic product}

\begin{proposition}
\label{prop:cutoff}
Fix $r\in\{2,3\}$ and suppose
$\operatorname{AF}_{S,r}^x(u)<\infty$.  Let $P_N$ be Fourier
multipliers with symbols $m_N(k)$ such that each $m_N$ has finite support,
$\sup_N\norm{m_N}_{\ell^\infty}<\infty$, and $m_N(k)\to1$ for every
$k\in\Z^2$.  Then
\begin{equation}\label{eq:cutoffconv}
 \mathbb P_{x\neq0}\bigl((P_Nu)^r\bigr)
 \longrightarrow\mathbb P_{x\neq0}(u^r)
 \quad\text{in }\A^{-S}_{x\neq0,t}.
\end{equation}
Consequently,
\begin{equation}\label{eq:cutoffderivative}
 \partial_x\bigl[(P_Nu)^r\bigr]
 \longrightarrow\partial_x\mathbb P_{x\neq0}(u^r)
\end{equation}
in $\A^{-S-1}_{x\neq0,t}$ and in distributions.  If in addition
$u\in C_tL^r(\T^2)$, then the Fourier-defined product agrees with the
projection of the ordinary product $u^r\in C_tL^1(\T^2)$.
\end{proposition}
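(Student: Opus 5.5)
The plan is to prove \eqref{eq:cutoffconv} by dominated convergence on the index set $\{(k_1,\ldots,k_r):(k_1+\cdots+k_r)_x\neq0\}$, with weight $\langle k_1+\cdots+k_r\rangle^{-S}$. Since $P_Nu$ has finitely many Fourier modes, $(P_Nu)^r$ is a trigonometric polynomial in $x,y$ with continuous-in-time coefficients. Its Fourier coefficients are the finite sums
\[
 \sum_{k_1+\cdots+k_r=k}\prod_j m_N(k_j)\widehat u(t,k_j).
\]
The difference to estimate, at a frequency $k$ with $k_x\neq0$, is therefore
\[
 \sum_{k_1+\cdots+k_r=k}\Bigl(\prod_j m_N(k_j)-1\Bigr)\prod_j\widehat u(t,k_j).
\]
Taking $C_t$ norms inside the sum and summing over $k$ with weight $\langle k\rangle^{-S}$ bounds $\norm{\mathbb P_{x\neq0}((P_Nu)^r)-\mathbb P_{x\neq0}(u^r)}_{\A^{-S}_{x\neq0,t}}$ by
\[
 \sum_{(k_1+\cdots+k_r)_x\neq0}\langle k_1+\cdots+k_r\rangle^{-S}
 \Bigl|\prod_j m_N(k_j)-1\Bigr|\,
 \norm{\prod_j\widehat u(\cdot,k_j)}_{C_t}.
\]
Each term tends to zero pointwise because $m_N(k_j)\to1$. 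Each term is also dominated by $(B^r+1)$ times the corresponding summand of $\operatorname{AF}_{S,r}^x(u)$, where $B=\sup_N\norm{m_N}_{\ell^\infty}$. Dominated convergence for series then gives \eqref{eq:cutoffconv}. The one point to check is that the multilinear structure lets the $C_t$ norm of the product be used directly: the scalar factor $\prod_j m_N(k_j)$ is time independent, so it pulls out of the $C_t$ norm.

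For \eqref{eq:cutoffderivative}, multiplication by $2\pi in$ has $\abs{2\pi in}\le2\pi\langle k\rangle$, so $\partial_x$ maps $\A^{-S}_{x\neq0,t}$ boundedly into $\A^{-S-1}_{x\neq0,t}$. The zero $x$-modes are killed by $\partial_x$ anyway, so $\partial_x[(P_Nu)^r]=\partial_x\mathbb P_{x\neq0}((P_Nu)^r)$. Convergence in $\A^{-S-1}_{x\neq0,t}$ implies distributional convergence. Indeed, for $\phi\in C^\infty_c([0,1)\times\T^2)$ the pairing is bounded by $\norm{\cdot}_{\A^{-S-1}}\sup_t\sum_k\langle k\rangle^{S+1}\abs{\widehat\phi(t,k)}$, which is finite since $\widehat\phi$ decays rapidly uniformly in $t$.

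For the final claim, I will pick a concrete family, the Fejér-type cutoffs $m_N(k)=(1-\abs n/N)_+(1-\abs m/N)_+$. These satisfy the hypotheses, and $P_N$ is convolution with a positive $L^1$-normalized kernel, so $P_Nu(t)\to u(t)$ in $L^r$ for each $t$. The $L^r$ bound is uniform and the convergence is uniform in $t$, since $u\in C_tL^r$ makes $\{u(t)\}$ compact in $L^r$ and approximate identities converge uniformly on compact sets. Hölder's inequality then gives $(P_Nu)^r\to u^r$ in $C_tL^1$. By \eqref{eq:L1Aminus} of Lemma~\ref{lem:L1wiener}, this implies $\mathbb P_{x\neq0}((P_Nu)^r)\to\mathbb P_{x\neq0}(u^r_{\mathrm{ord}})$ in $\A^{-S'}_{x\neq0,t}$ for any $S'>\max(S,2)$. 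The first part gives convergence to the Fourier-defined product in $\A^{-S}\subset\A^{-S'}$. Limits in $\A^{-S'}$ are unique coefficientwise, so the two products agree.

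I expect the main obstacle to be this last identification. The only subtle point is the uniformity in $t$ of $P_Nu\to u$, which is handled by the compactness argument above. If one prefers to avoid it, it suffices to argue coefficientwise at each fixed $t$ and $k$, where convergence of $\widehat{(P_Nu)^r}(t,k)$ only needs $L^1$ convergence at that time.
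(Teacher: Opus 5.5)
Your proposal is correct and takes essentially the same route as the paper. You use dominated convergence on the series defining $\operatorname{AF}_{S,r}^x(u)$ with the bounded, pointwise vanishing factor $\prod_j m_N(k_j)-1$, then boundedness of $\partial_x$ from $\A^{-S}_{x\neq0,t}$ to $\A^{-S-1}_{x\neq0,t}$, and then an approximate-identity cutoff with H\"older's inequality to identify the limit with the ordinary product. Your compactness argument for uniformity in $t$ and your explicit use of \eqref{eq:L1Aminus} to compare the two limits fill in details the paper leaves implicit.
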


\begin{proof}
After expanding the products, the norm of the difference in
\eqref{eq:cutoffconv} is bounded by
\[
 \sum_{(k_1+\cdots+k_r)_x\neq0}
 \abs{\prod_{j=1}^rm_N(k_j)-1}
 \langle k_1+\cdots+k_r\rangle^{-S}
 \norm{\prod_{j=1}^r\widehat u(\cdot,k_j)}_{C_t}.
\]
The first factor tends pointwise to zero and is uniformly bounded, while
the remaining series is \eqref{eq:periodic-AF}.  Dominated convergence
proves \eqref{eq:cutoffconv}, and applying $\partial_x$ proves
\eqref{eq:cutoffderivative}.

For smooth approximate-identity Fourier cutoffs,
$P_Nu\to u$ in $C_tL^r$, and H\"older's inequality gives
\[
 \norm{(P_Nu)^r-u^r}_{C_tL^1}
 \leq\sum_{j=0}^{r-1}
  \norm{P_Nu}_{C_tL^r}^{r-1-j}\norm u_{C_tL^r}^j
  \norm{P_Nu-u}_{C_tL^r}\longrightarrow0.
\]
The cutoff-independent limit is therefore the projection of the ordinary
product.
\end{proof}

\subsection{The relaxed equation and perturbation update}

Fix $r\in\{2,3\}$, $d\in\{3,5\}$, and
$\kappa\in\{-1,1\}$.  On $\T^2$ let $u$ and $E$ be smooth and real,
both with zero $x$-mean, and satisfy
\begin{equation}\label{eq:relaxed-general}
 \partial_tu+(-1)^{(d+1)/2}\partial_x^du
 +\kappa\partial_x^{-1}\partial_y^2u+\partial_x(u^r)
 =\partial_xE.
\end{equation}
For $u^+=u+w$, where $w$ has zero $x$-mean, define
\begin{equation}\label{eq:periodic-update-general}
 E^+=\mathbb P_{x\neq0}\left(
 E+(u+w)^r-u^r+(-1)^{(d+1)/2}\partial_x^{d-1}w
 +\kappa\partial_x^{-2}\partial_y^2w
 +\partial_t\partial_x^{-1}w\right).
\end{equation}
Then $(u^+,E^+)$ satisfies \eqref{eq:relaxed-general}.  On $\R^2$, assume
that $\widehat w$ has a fixed gap from $\{\xi=0\}$ and use the whole-space
update
\begin{equation}\label{eq:Euclidean-update-general}
 E^+=E+(u+w)^r-u^r+(-1)^{(d+1)/2}\partial_x^{d-1}w
 +\kappa\partial_x^{-2}\partial_y^2w
 +\partial_t\partial_x^{-1}w.
\end{equation}

The Fourier support of $w$ must avoid zero $x$-frequency, but no such
restriction is imposed on $w^r$.  When the large $x$-frequencies in the
factors of $w$ sum to zero, the corresponding terms in $w^r$ return to
low $x$-frequency and cancel $E$, up to the localization and
amplitude-truncation errors on $\R^2$.  The inverse powers of
$\partial_x$ in the update act only on $w$, while the relaxed equation
contains $\partial_x(w^r)$.

\subsection{Periodic intermittent profiles}

The quadratic perturbation requires an exact second moment for cancellation,
decay in $L^p$ for $p<2$, and separated Fourier support with nonnegative
coefficients.  Trigonometric polynomial versions of the intermittent profiles
in~\cite{GMPR} provide these properties.

\begin{lemma}\label{lem:slab}
Fix $0<\eps<1$.  For every sufficiently large
$\la>1$ such that $\mu=\la^\eps\in\mathbb N$, set
\begin{equation*}
 \nu=\la^{1+\eps}=\la\mu.
\end{equation*}
We call these choices of $\la$ admissible.
Then there exists a real, even trigonometric polynomial
$\rho_{\la,\eps}=\rho_{\la,\eps}(x)$ such that
\begin{align}
 &\int_\T\rho_{\la,\eps}\,dx=0,
 \qquad \int_\T\rho_{\la,\eps}^2\,dx=1,
 \label{eq:slabmoments}\\
 &\supp\widehat\rho_{\la,\eps}
 \subset(\mu\Z\setminus\{0\})\cap[-2\nu,2\nu],
 \notag\\
 &\norm{\rho_{\la,\eps}}_{L^p(\T)}
 \leq C_{p,\eps}\la^{(1-\eps)(1/2-1/p)},
 \qquad1\leq p\leq\infty,
 \label{eq:slabLp}\\
 &0\leq\widehat\rho_{\la,\eps}(n)
 \leq C_\eps\la^{-(1-\eps)/2}.
 \label{eq:slab-coefficient-bound}
\end{align}
\end{lemma}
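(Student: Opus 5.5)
We build $\rho_{\la,\eps}$ from a Fejér kernel placed on the lattice $\mu\Z$ and modulated to $x$-frequency $\nu$. Set $L=\la^{1-\eps}$; we may assume $L$ is an integer (or use $\lfloor L\rfloor$, which changes only constants). Let
\[
 F_L(z)=\sum_{\abs j\leq L}\Bigl(1-\tfrac{\abs j}{L+1}\Bigr)e^{2\pi ijz}\geq0
\]
be the Fejér kernel of order $L$. Define the unnormalized profile
\[
 g(x)=F_L(\mu x)\cos(2\pi\nu x)
\]
and set $\rho_{\la,\eps}=g/\norm g_{L^2(\T)}$.

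The required properties then follow in order.

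\emph{Evenness and reality.} These are immediate, since $F_L$ is real and even and cosine is even.

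\emph{Fourier support.} The coefficients of $F_L(\mu\cdot)$ sit at $\mu j$ with $\abs j\leq L$. Multiplying by $\cos(2\pi\nu x)$, with $\nu=\la\mu\in\mu\Z$, shifts them to $\mu(j\pm\la)$. Since $\abs j\leq L=\la^{1-\eps}<\la$, none of these frequencies vanishes, so the support lies in $\mu\Z\setminus\{0\}$. Each frequency also has modulus at most $\nu+\mu L\leq2\nu$. Because the support avoids zero, the mean of $\rho_{\la,\eps}$ vanishes.

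\emph{Nonnegative coefficients.} The two shifted copies $\mu(j+\la)$ and $\mu(j-\la)$ do not overlap, because $2L<2\la$. Hence each coefficient of $g$ equals $\tfrac12(1-\abs j/(L+1))$, which is nonnegative.

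\emph{Normalization.} By Parseval,
\[
 \norm g_2^2=\tfrac12\sum_{\abs j\leq L}\Bigl(1-\tfrac{\abs j}{L+1}\Bigr)^2\simeq L.
\]
Therefore $\widehat\rho_{\la,\eps}(n)\leq\tfrac12\norm g_2^{-1}\lesssim L^{-1/2}=\la^{-(1-\eps)/2}$, which is \eqref{eq:slab-coefficient-bound}. The normalization in \eqref{eq:slabmoments} holds by construction.

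\emph{$L^p$ bounds.} For $1\leq p\leq\infty$, use
\[
 \norm g_{L^p(\T)}\leq\norm{F_L(\mu\cdot)}_{L^p(\T)}=\norm{F_L}_{L^p(\T)}.
\]
The equality holds because $z\mapsto\mu z$ is a measure-preserving $\mu$-to-one covering of $\T$, and $\mu$ is an integer.

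The Fejér kernel satisfies $F_L(z)\lesssim\min(L,\,L^{-1}\abs z^{-2})$. Integrating this bound gives
\[
 \norm{F_L}_p^p\lesssim L^{p}\cdot L^{-1}+\int_{\abs z>1/L}L^{-p}\abs z^{-2p}\,dz\lesssim L^{p-1}
\]
for $p>1/2$. At $p=\infty$ the bound is $L$ directly. Hence $\norm{F_L}_p\lesssim L^{1-1/p}$ on the whole range $1\leq p\leq\infty$. Dividing by $\norm g_2\simeq L^{1/2}$ gives
\[
 \norm{\rho_{\la,\eps}}_p\lesssim L^{1/2-1/p}=\la^{(1-\eps)(1/2-1/p)},
\]
which is \eqref{eq:slabLp}.

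No step here is a genuine obstacle; the only care needed is bookkeeping. One must confirm that the modulated copies neither overlap nor reach zero, which uses $L<\la$ and $\nu\in\mu\Z$. One must also handle integrality: for admissible $\la$, $\mu$ is an integer and hence so is $\nu=\la\mu$, while $\la^{1-\eps}$ may be replaced by its integer part. Both points are settled by taking $\la$ sufficiently large, which is where that hypothesis is used.
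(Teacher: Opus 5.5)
\textbf{One step fails as written.} You assert that $\nu=\la\mu$ is an integer because $\mu$ is, and later that $\nu\in\mu\Z$. Neither follows. Admissibility only gives $\mu=\la^\eps\in\mathbb N$, so $\la=\mu^{1/\eps}$, and $\nu\in\mu\Z$ holds exactly when $\la\in\Z$. That fails in general: for $\eps=2/3$ and $\mu$ not a perfect square, $\la=\mu^{3/2}$ is irrational. In that case the frequencies $\mu j\pm\nu$ of $g$ do not lie in $\mu\Z$. If moreover $\nu\notin\Z$, the factor $\cos(2\pi\nu x)$ is not even $1$-periodic, so $g$ is not a trigonometric polynomial on $\T$. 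Your support, non-overlap and positivity steps all rest on $\nu\in\mu\Z$. (In the paper's iteration $1/\eps_q=2^{q+4}$ is an integer, so $\la_q\in\Z$ there, but the lemma is stated for every $\eps\in(0,1)$.)

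The repair is one line: modulate by $\cos(2\pi\mu Nx)$ with $N=\lfloor\la\rfloor$, or any integer with $L<N\leq\la$. The frequencies become $\mu(j\pm N)\in\mu\Z\setminus\{0\}$, of modulus at most $\mu N+\mu L\leq\nu+\la\leq2\nu$. The two copies are disjoint because $N>L$. The rest of your argument (Parseval, the coefficient bound, $\abs g\leq F_L(\mu\cdot)$, and the Fej\'er $L^p$ estimate) is unchanged. This is how the paper handles integrality in Lemma~\ref{lem:cubic-profile}, where the modulation $Q_J(\mu x)$ uses $J=\lfloor\la/\mu\rfloor$.

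\textbf{Comparison with the paper.} With that fix your proof is correct and genuinely different from the paper's. The paper periodizes a compactly supported bump $\phi_2=(\psi*\psi)/\norm{\psi*\psi}_{L^2}$ with $\int\psi=0$, at width $\la^{-1}$ and spacing $\mu^{-1}$. Poisson summation gives coefficients $J^{-1/2}\widehat\phi_2(j/J)\geq0$ on $\mu\Z$, and the zero mode vanishes because $\widehat\phi_2(0)=0$. The $L^p$ norms of this untruncated profile are computed exactly from the disjointness of the bumps. The cost is that its Fourier series is infinite, so the paper must truncate at $\abs k\leq2\nu$, bound the tail by Schwartz decay, and renormalize with $Z_{\la,\eps}=1+O_M(\mu^{-M})$.

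Your modulated Fej\'er kernel is a trigonometric polynomial from the outset, so normalization is exact and no tail analysis is needed. Your $L^p$ bound comes from the pointwise estimate $F_L(z)\lesssim\min(L,L^{-1}\abs z^{-2})$ rather than disjoint supports; this suffices because only $p\geq1$ is required. Mean zero and positivity come from the modulation and the separation of the two copies, not from $\int\phi_2=0$ and $\widehat\phi_2\propto\abs{\widehat\psi}^2$.

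As a side benefit, your Fourier support is a band of width about $2\la$ around $\pm\mu N$, at distance comparable to $\nu$ from the origin. The paper's coefficients instead sit mostly at $\abs k\lesssim\la$ and reach down to $\abs k=\mu$. Your construction is essentially the one-dimensional analogue of the paper's own cubic profiles.
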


\begin{proof}
Choose a nonzero real, even function $\psi\in C_c^\infty((-1/8,1/8))$
with $\int_\R\psi=0$, and set
$\phi_2=(\psi*\psi)/\norm{\psi*\psi}_{L^2(\R)}$.  Then
\begin{equation*}
 \phi_2\in C_c^\infty((-1/4,1/4)),\quad
 \phi_2\text{ is real and even},\quad
 \int_\R\phi_2=0,\quad \widehat\phi_2\geq0,
 \quad \int_\R\phi_2^2=1.
\end{equation*}

For the rest of the proof write $J=\la/\mu=\la^{1-\eps}$.  Define the
untruncated periodic function
\begin{equation}\label{eq:slab-untruncated}
 \rho^\circ_{\la,\eps}(x)
 =J^{1/2}\sum_{j\in\Z}\phi_2(\la x+Jj).
\end{equation}
Since $\la=\mu J$, translating $x$ by $1/\mu$ in
\eqref{eq:slab-untruncated} changes the summation index from $j$ to
$j+1$.  Thus
$\rho^\circ_{\la,\eps}$ is $1/\mu$-periodic and, because
$\mu\in\mathbb N$, is a
well-defined function on $\T$.  The supports of the translates of $\phi_2$ are
disjoint.  Integrating over the $\mu$ fundamental intervals of length
$1/\mu$, and then making the change of variables $z=\la x$, gives, for
$1\leq p<\infty$,
\begin{equation}\label{eq:slab-untruncated-Lp}
 \norm{\rho^\circ_{\la,\eps}}_{L^p(\T)}^p
 =J^{p/2-1}\norm{\phi_2}_{L^p(\R)}^p.
\end{equation}
The same disjointness gives
\begin{equation}\label{eq:slab-untruncated-Linfty}
 \norm{\rho^\circ_{\la,\eps}}_{L^\infty(\T)}
 =J^{1/2}\norm{\phi_2}_{L^\infty(\R)}.
\end{equation}
$\rho^\circ_{\la,\eps}$ has mean zero and
$\int_\T(\rho^\circ_{\la,\eps})^2=1$.

Poisson summation with lattice spacing $J$ yields the exact Fourier series
\begin{equation*}
 \rho^\circ_{\la,\eps}(x)
 =J^{-1/2}\sum_{j\in\Z}
  \widehat\phi_2(j/J)e^{2\pi i\mu jx}.
\end{equation*}
Thus its Fourier coefficients are nonnegative, the coefficient at zero
vanishes, and every frequency lies in $\mu\Z$.

To obtain a trigonometric polynomial, fix an even
$\chi\in C_c^\infty((-2,2))$ such that
$0\leq\chi\leq1$ and $\chi=1$ on $[-1,1]$, and define
\begin{equation*}
 \widetilde\rho_{\la,\eps}(x)
 =J^{-1/2}\sum_{j\in\Z}
  \chi(j/\la)\widehat\phi_2(j/J)e^{2\pi i\mu jx}.
\end{equation*}
At $k=\mu j$, the cutoff equals $\chi(k/\nu)$; it is one on
$\abs k\leq\nu$ and supported in
$\abs k<2\nu$.  Hence the profile is real and even, its Fourier coefficients are
nonnegative, it has mean zero, and
\begin{equation}\label{eq:slab-truncated-support}
 \supp\widehat{\widetilde\rho}_{\la,\eps}
 \subset(\mu\Z\setminus\{0\})\cap[-2\nu,2\nu].
\end{equation}

For every integer $a\geq0$ and every $M\geq0$, Schwartz decay of
$\widehat\phi_2$ and comparison with an integral give
\begin{equation}\label{eq:slab-uniform-tail}
 \norm{\partial_x^a(\rho^\circ_{\la,\eps}
       -\widetilde\rho_{\la,\eps})}_{L^\infty}
 \leq C_{a,M}J^{1/2}\la^a\mu^{-M}.
\end{equation}
Indeed, the difference contains only indices $\abs j\geq\la$, and for
arbitrarily large $N$ its left side is at most
\begin{align*}
 C_{a,N}J^{-1/2}\mu^a
 \sum_{\abs j\geq\la}\abs j^a(1+\abs j/J)^{-N}
 &\leq C_{a,N}J^{a+1/2}\mu^{2a+1-N}\\
 &\leq C_{a,M}J^{1/2}\la^a\mu^{-M}
\end{align*}
once $N\geq M+a+2$, since $\la=J\mu$.
Since $J=\mu^{(1-\eps)/\eps}$, \eqref{eq:slab-uniform-tail} with a
larger decay exponent gives
$\norm{\rho^\circ_{\la,\eps}-\widetilde\rho_{\la,\eps}}_{L^2}
=O_M(\mu^{-M})$.  Hence
\begin{equation}\label{eq:slab-normalizer-limit}
 \int_\T\widetilde\rho_{\la,\eps}^2\,dx
 =1+O_M(\mu^{-M})
 \qquad\text{for every }M>0.
\end{equation}
For all sufficiently large admissible $\la$, set
\begin{equation*}
 Z_{\la,\eps}
 =\left(\int_\T\widetilde\rho_{\la,\eps}^2\,dx\right)^{1/2},
 \qquad
 \rho_{\la,\eps}
 =Z_{\la,\eps}^{-1}\widetilde\rho_{\la,\eps}.
\end{equation*}
The normalized profile is real and even, has mean zero and nonnegative
Fourier coefficients, and retains the support in
\eqref{eq:slab-truncated-support}, while giving the exact identity
$\int_\T\rho_{\la,\eps}^2=1$.  Moreover,
$Z_{\la,\eps}=1+O_M(\mu^{-M})$ and
$1/2\leq Z_{\la,\eps}\leq2$ for large $\la$.  Its Fourier coefficients are
\begin{equation*}
 \widehat\rho_{\la,\eps}(k)=
 \begin{cases}
  Z_{\la,\eps}^{-1}J^{-1/2}
  \chi(j/\la)\widehat\phi_2(j/J),&k=\mu j,\\
  0,&k\notin\mu\Z.
 \end{cases}
\end{equation*}
Since $Z_{\la,\eps}^{-1}\leq2$ and $J=\la^{1-\eps}$,
\[
 0\leq\widehat\rho_{\la,\eps}(k)
 \leq2J^{-1/2}\norm{\widehat\phi_2}_{L^\infty}
 \leq C_\eps\la^{-(1-\eps)/2},
\]
which proves \eqref{eq:slab-coefficient-bound}.

Finally fix $1\leq p\leq\infty$.  Choose $M$ so large that
$\mu^{-M}\leq J^{-1/p}$ for all sufficiently large $\la$.  Equations
\eqref{eq:slab-untruncated-Lp}, \eqref{eq:slab-untruncated-Linfty},
\eqref{eq:slab-uniform-tail}, and \eqref{eq:slab-normalizer-limit} imply
\begin{align*}
 \norm{\rho_{\la,\eps}}_{L^p(\T)}
 &\leq2\left(\norm{\rho^\circ_{\la,\eps}}_{L^p(\T)}
 +\norm{\rho^\circ_{\la,\eps}
 -\widetilde\rho_{\la,\eps}}_{L^p(\T)}\right)\\
 &\leq C_{p,\eps}J^{1/2-1/p}
 =C_{p,\eps}\la^{(1-\eps)(1/2-1/p)}.
\end{align*}
\end{proof}

\section{Periodic quadratic estimates}

\subsection{The perturbation and its error decomposition}

Fix $d\in\{3,5\}$ and $\kappa\in\{-1,1\}$.  For $r=2$, write the updated
error as
\begin{align}
    E_O&=\mathbb P_{x\neq0}(E+w^2),\label{eq:RO}\\
    E_N&=2\mathbb P_{x\neq0}(uw),\label{eq:RN}\\
    E_D&=(-1)^{(d+1)/2}\partial_x^{d-1}w
       +\kappa\partial_x^{-2}\partial_y^2w,\label{eq:RD}\\
    E_T&=\partial_t\partial_x^{-1}w.\label{eq:RT}
\end{align}
These are the oscillation, Nash, dispersion, and temporal errors,
respectively.
For $I=[t_-,t_+]\Subset(0,1)$ and
$0<\tau<\operatorname{dist}(I,\{0,1\})$, choose
$h\in C_c^\infty((0,1))$ such that
\begin{equation}\label{eq:time-cutoff-lemma}
   0\leq h\leq1,
   \qquad h=1\ \text{on }I,
   \qquad \supp h\subset(t_--\tau,t_++\tau),
   \qquad \norm{h'}_{L^\infty}\leq C\tau^{-1},
\end{equation}
where $C$ is universal.

The square-root amplitude is not finitely supported in frequency, whereas
the perturbation must be.  Truncation at scale $\mu^{1/2}$ is negligible
relative to the profile spacing $\mu$.

\begin{lemma}\label{lem:amplitude-cutoff}
Let $E\in C^1([0,1];C^\infty(\T^2))$ be real and have fixed finite spatial
Fourier support.  Suppose
$A\geq1+2\norm{E}_{C_tL^\infty}$ and put
\[
   a=(A-E)^{1/2},
   \qquad b_L=P_{\leq L}^{x,y}a,
   \qquad L\geq2.
\]
For every integer $N\geq0$,
\begin{equation}\label{eq:amplitude-smooth-bounds}
   \norm{a}_{\A^N_t}+\norm{\partial_ta}_{\A^N_t}
      \leq C_{E,A,N}.
\end{equation}
Moreover, $b_L$ is real, its spatial Fourier support is contained in
$\{\langle k\rangle\leq2L\}$, and
\begin{equation}\label{eq:amplitude-uniform-bounds}
   \norm{b_L}_{C_tL^\infty}
     +\norm{\partial_tb_L}_{C_tL^\infty}
       \leq C_{E,A}.
\end{equation}
If $0\leq j\leq N$, then
\begin{equation*}
\begin{split}
   \norm{a-b_L}_{\A^j_t}
      &\leq C_{E,A,N}L^{j-N},\\
   \norm{a^2-b_L^2}_{\A^j_t}
      &\leq C_{E,A,N,j}L^{j-N}.
\end{split}
\end{equation*}
In particular, for $L=\mu^{1/2}$, every integer $N\geq0$, and every
$S>0$,
\begin{equation}\label{eq:amplitude-negative-tail}
   \norm{\mathbb P_{x\neq0}(b_L^2-a^2)}
      _{\A^{-S}_{x\neq0,t}}
      \leq C_{E,A,N}\mu^{-N/2}.
\end{equation}
\end{lemma}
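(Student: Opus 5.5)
The plan is to treat $a=(A-E)^{1/2}$ as a smooth composition $F(E)$ with $F(s)=(A-s)^{1/2}$. Since $A-E\geq1+\norm{E}_{C_tL^\infty}\geq1$, $F$ is smooth with all derivatives bounded on the range of $E$. Because $E\in C^1([0,1];C^\infty)$ has fixed finite spatial Fourier support, $\sup_t\norm{\partial_x^\alpha\partial_y^\beta E(t)}_{L^\infty}$ and $\sup_t\norm{\partial_x^\alpha\partial_y^\beta\partial_tE(t)}_{L^\infty}$ are finite for every $\alpha,\beta$.

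\textbf{Smooth bounds.} The chain rule (Fa\`a di Bruno) gives $\sup_t\norm{a(t)}_{C^K}+\sup_t\norm{\partial_ta(t)}_{C^K}\leq C_{E,A,K}$ for every $K$; for $\partial_ta=-\tfrac12(A-E)^{-1/2}\partial_tE$ this is the product rule. Integrating by parts in the Fourier coefficients gives $\abs{\widehat a(t,k)}\lesssim\langle k\rangle^{-K}\norm{a(t)}_{C^K}$ uniformly in $t$, and likewise for $\partial_ta$. Choosing $K=N+3$ and summing over $k\in\Z^2$ proves \eqref{eq:amplitude-smooth-bounds}. Continuity in $t$ of each coefficient holds because $a\in C^1_tC^\infty$.

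\textbf{Cutoff bounds.} Reality of $b_L$ follows because the symbol of $P_{\leq L}^{x,y}$ is real and even, and the support claim is immediate. For \eqref{eq:amplitude-uniform-bounds}, use $\abs{\widehat{b_L}}\leq\abs{\widehat a}$, so that $\norm{b_L}_{L^\infty}\leq\norm{a}_{\A^0_t}$; the same holds for $\partial_tb_L=P_{\leq L}^{x,y}\partial_ta$. The constant is then independent of $L$ by the $N=0$ case of \eqref{eq:amplitude-smooth-bounds}.

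\textbf{Tail estimate.} The difference $a-b_L$ only involves modes with $\langle k\rangle>L$, with coefficients bounded by $\abs{\widehat a}$. Hence, for $j\leq N$,
\[
\norm{a-b_L}_{\A^j_t}\leq\sum_{\langle k\rangle>L}\langle k\rangle^{j}\norm{\widehat a(\cdot,k)}_{C_t}\leq L^{j-N}\norm{a}_{\A^N_t}.
\]
For the squares, write $a^2-b_L^2=(a-b_L)(a+b_L)$ and use the algebra property $\norm{fg}_{\A^j_t}\leq 2^{j}\norm f_{\A^j_t}\norm g_{\A^j_t}$. This follows from $\langle k_1+k_2\rangle^j\leq 2^{j}\langle k_1\rangle^j\langle k_2\rangle^j$ for $j\geq0$, together with $\norm{fg}_{C_t}\leq\norm f_{C_t}\norm g_{C_t}$ on coefficients. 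Bounding $\norm{a+b_L}_{\A^j_t}\leq2\norm a_{\A^N_t}$ then gives the second estimate.

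\textbf{Negative-norm tail.} For \eqref{eq:amplitude-negative-tail}, bound the $\A^{-S}_{x\neq0,t}$ norm of the projection by the $\A^{0}_t$ norm, then apply the $j=0$ estimate with $L=\mu^{1/2}$. This gives $C_{E,A,N}\mu^{-N/2}$.

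The only delicate point is the uniformity in $t$ of the coefficient decay for $\partial_ta$. It requires the $C_t$ sup to be taken after the integration by parts, which is legitimate because $E$ and $\partial_tE$ are continuous in $t$ with values in the finite-dimensional space of trigonometric polynomials with the given Fourier support.
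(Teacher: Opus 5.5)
Your proposal is correct and follows essentially the same route as the paper. The paper uses smooth functional calculus with rapid Fourier decay for the $\A^N_t$ bounds, contractivity of the cutoff together with $\A^0\hookrightarrow L^\infty$, the tail bound $L^{j-N}\norm{a}_{\A^N_t}$, the Wiener algebra inequality for $a^2-b_L^2$, and finally $j=0$ with $\langle k\rangle^{-S}\leq1$ and $L=\mu^{1/2}$. You additionally supply the details the paper leaves implicit: Fa\`a di Bruno, integration by parts in the Fourier coefficients, and the explicit algebra constant $2^j$.
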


\begin{proof}
Smooth functional calculus and rapid Fourier decay give
\eqref{eq:amplitude-smooth-bounds}.  The cutoff gives the stated support,
and $b_L$ is real-valued.  The cutoff is contractive in nonnegative weighted
Wiener norms, and $\A^0\hookrightarrow L^\infty$ gives
\eqref{eq:amplitude-uniform-bounds}.

For $0\leq j\leq N$,
\[
 \norm{a-b_L}_{\A^j_t}
 \leq L^{j-N}\norm{a}_{\A^N_t}.
\]
The nonnegative weighted Wiener spaces are algebras, so
\[
 \norm{a^2-b_L^2}_{\A^j_t}
 \lesssim_j
   \norm{a-b_L}_{\A^j_t}
   \bigl(\norm{a}_{\A^j_t}+\norm{b_L}_{\A^j_t}\bigr).
\]
Taking $j=0$, using
$\langle k\rangle^{-S}\leq1$, and then setting $L=\mu^{1/2}$ gives
\eqref{eq:amplitude-negative-tail}.
\end{proof}

Fix
\begin{equation}\label{eq:onestep-fixed-parameters}
   0<\eps<1,
   \qquad 0<\beta<\frac12(1-\eps),
   \qquad S>d+1.
\end{equation}

Fix a pair $(u,E)$ with finite spatial Fourier support and let
\[
   K=\max\left(\{1\}\cup\left\{\langle(n,m)\rangle:
       (n,m)\in\supp\widehat u\cup\supp\widehat E\right\}\right).
\]
Choose $A$ so that
\begin{equation}\label{eq:Achoice}
    A\geq C_S\left(1+\norm{E}_{C_tL^\infty}
                         +\norm{E}_{\A^S_t}\right),
\end{equation}
where $C_S\geq1$ absorbs the weighted-Wiener algebra constant and is large enough
that the binomial series for $(A-E)^{1/2}$ converges absolutely.
Define the positive amplitude
\[
    a=(A-E)^{1/2}.
\]
Choose a closed interval $I\Subset(0,1)$ containing the time supports of
both $u$ and $E$.  With the profile from Lemma~\ref{lem:slab}, choose an
admissible $\la$ so large that
\begin{equation}\label{eq:periodic-frequency-separation}
    \mu\geq64,
    \qquad 16K\leq\mu^{1/2},
\end{equation}
and $\la^{-\beta}<\operatorname{dist}(I,\{0,1\})$.  Choose $h$ satisfying
\eqref{eq:time-cutoff-lemma} with $\tau=\la^{-\beta}$, and
set
\begin{equation}\label{eq:increment}
    b=P_{\leq\mu^{1/2}}^{x,y}a,
    \qquad
    w(t,x,y)=h(t)b(t,x,y)\rho_{\la,\eps}(x).
\end{equation}

Writing $k=(n,m)$, every $k\in\supp\widehat w$ has a decomposition
$k=(j,0)+q$, where
$j\in(\mu\Z\setminus\{0\})\cap[-2\nu,2\nu]$ belongs to
$\supp\widehat{\rho_{\la,\eps}}$ and
$\langle q\rangle\leq2\mu^{1/2}$.  Since $\mu\geq64$,
\begin{equation}\label{eq:wsupport}
 \abs n\geq\abs j-\abs{q_x}
      \geq\mu-2\mu^{1/2}\geq\frac12\mu,
 \qquad
 \abs n\leq2\nu+2\mu^{1/2}\leq3\nu,
 \qquad
 \abs m\leq2\mu^{1/2}
 \quad\text{on }\supp\widehat w.
\end{equation}
For fixed $u,E$, and $A$,
Lemmas~\ref{lem:amplitude-cutoff} and~\ref{lem:slab}, together with
$0\leq h\leq1$, give, for every $t$,
\[
  \norm{w(t)}_{L^p(\T^2)}
  \leq\norm{b(t)}_{L^\infty(\T^2)}
       \norm{\rho_{\la,\eps}}_{L^p(\T)}.
\]
Consequently,
\begin{equation}\label{eq:wLp}
    \norm{w}_{C_tL^p(\T^2)}
    \lesssim_{p,E,A,\eps}
    \la^{(1-\eps)(\frac12-\frac1p)}
    \longrightarrow0,
    \qquad 1\leq p<2.
\end{equation}

\subsection{Error estimates}

Throughout this subsection we assume
\eqref{eq:onestep-fixed-parameters}--\eqref{eq:increment}.  The pair $(u,E)$
and the constant $A$ are fixed before the admissible value of $\la$
is chosen.  The constants are uniform in $\la$ and may depend on
$u,E,A,S$, and $\eps$.

\subsubsection*{Dispersion error}

For $E_D$ in \eqref{eq:RD}, the Fourier multiplier satisfies
\begin{equation*}
 \abs{-(2\pi n)^{d-1}+\kappa\frac{m^2}{n^2}}
 \leq(2\pi\abs n)^{d-1}+\frac{m^2}{n^2}
 \lesssim_d\langle(n,m)\rangle^{d-1},
 \qquad n\neq0.
\end{equation*}
The symbol bound, Lemma~\ref{lem:L1wiener}, and \eqref{eq:wLp} with
$p=1$ give
\begin{equation}\label{eq:RDestimate}
    \norm{E_D}_{\A^{-S}_{x\neq0,t}}
    \lesssim_S\norm{w}_{C_tL^1}
    \lesssim_{E,A,S,\eps}\la^{-\frac12(1-\eps)},
    \qquad S>d+1.
\end{equation}
\subsubsection*{Nash error}

For $E_N=2\mathbb P_{x\neq0}(uw)$ in \eqref{eq:RN},
Lemma~\ref{lem:L1wiener} and \eqref{eq:wLp} with $p=1$ give
\begin{equation}\label{eq:RNestimate}
    \norm{E_N}_{\A^{-S}_{x\neq0,t}}
    \lesssim_S\norm{uw}_{C_tL^1}
    \leq\norm u_{C_tL^\infty}\norm w_{C_tL^1}
    \lesssim_{u,E,A,S,\eps}\la^{-\frac12(1-\eps)},
    \qquad S>2.
\end{equation}

\subsubsection*{Oscillation error}

For $E_O=\mathbb P_{x\neq0}(E+w^2)$ in \eqref{eq:RO}, the identities
$a^2=A-E$ and $hE=E$ give
\begin{equation*}
    E_O
    =h^2\mathbb P_{x\neq0}(b^2-a^2)
     +h^2\mathbb P_{x\neq0}
       \bigl(b^2(\rho_{\la,\eps}^2-1)\bigr).
\end{equation*}
The function $\rho_{\la,\eps}^2-1$ has zero $x$-mean and its nonzero
$x$-frequencies lie in $\mu\Z$.  Since $b^2$ has $x$-frequency at most
$4\mu^{1/2}$,
\begin{equation*}
 (n,m)\in\supp\widehat{b^2(\rho_{\la,\eps}^2-1)}
 \quad\Longrightarrow\quad
 \abs n\geq\mu-4\mu^{1/2}\geq\frac12\mu.
\end{equation*}
\[
   \norm{b^2(\rho_{\la,\eps}^2-1)}_{C_tL^1}
      \leq2\norm{b}_{C_tL^\infty}^2.
\]
Using $0\leq h\leq1$, the amplitude-tail estimate
\eqref{eq:amplitude-negative-tail} and the gap estimate
\eqref{eq:gapL1} therefore give, for every integer $N\geq0$,
\begin{equation}\label{eq:ROestimate}
    \norm{E_O}_{\A^{-S}_{x\neq0,t}}
    \leq C_{E,A,N}\mu^{-N/2}
       +C_{E,A,S}\mu^{2-S},
    \qquad S>2.
\end{equation}
\subsubsection*{Temporal error}

For $E_T=\partial_t\partial_x^{-1}w$ in \eqref{eq:RT}, differentiating
\eqref{eq:increment} gives
\[
   \partial_tw=h'b\rho_{\la,\eps}
      +h(\partial_tb)\rho_{\la,\eps}.
\]
Lemma~\ref{lem:amplitude-cutoff}, Lemma~\ref{lem:slab}, and
$\norm{h'}_{L^\infty}\lesssim\la^\beta$ therefore yield
\begin{equation}\label{eq:dwt}
\begin{split}
    \norm{\partial_tw}_{C_tL^1}
    &\leq
       \left(\norm{h'}_{L^\infty}\norm b_{C_tL^\infty}
          +\norm{\partial_tb}_{C_tL^\infty}\right)
          \norm{\rho_{\la,\eps}}_{L^1}\\
    &\lesssim_{E,A,\eps}
       (1+\la^\beta)\la^{-\frac12(1-\eps)}.
\end{split}
\end{equation}
Equations~\eqref{eq:wsupport} and \eqref{eq:dwt}, together with
Lemma~\ref{lem:L1wiener}, give, for $S>2$,
\begin{equation}\label{eq:RTestimate}
    \norm{E_T}_{\A^{-S}_{x\neq0,t}}
    \lesssim_{E,A,S,\eps}
       (1+\la^\beta)\la^{-\frac12(1-\eps)},
    \qquad S>2.
\end{equation}
By \eqref{eq:RO}--\eqref{eq:RT}, $E^+=E_O+E_N+E_D+E_T$.  The triangle
inequality, the choice $N=2$ in \eqref{eq:ROestimate}, and the bounds \eqref{eq:RDestimate},
\eqref{eq:RNestimate}, and \eqref{eq:RTestimate} give
\begin{equation}\label{eq:onestep-total-error}
   \norm{E^+}_{\A^{-S}_{x\neq0,t}}
   \leq C_{u,E,A,S,\eps}\bigl(
       \mu^{-1}+\mu^{2-S}
       +(1+\la^\beta)
          \la^{-\frac12(1-\eps)}\bigr).
\end{equation}
For fixed $u,E$, and $A$, the right side tends to zero as
$\la\to\infty$ through admissible values when $S>d+1$ and
$\beta<\frac12(1-\eps)$.

\subsection{Weighted Fourier estimate}

The bound
$\operatorname{AF}_S^x(w)\leq\norm{w}_{\A_t^0}^2$ would produce a term
of size $A$.  Expanding the two factors of $\rho_{\la,\eps}$ separates
the terms with zero total profile frequency.  Their part linear in $E$
contributes exactly $\norm{E}_{\A^{-S}_{x\neq0,t}}$, the remaining terms
are $O(A^{-1})$, and nonzero total profile frequency gives a factor
$\mu^{-S}$.

\begin{proposition}
\label{prop:periodic-absolute-summability}
Let $S>1$, and let $u,E\in C_tC^\infty(\T^2)$ be real functions with fixed
finite spatial Fourier support such that
\[
   \mathbb P_{x\neq0}u=u,
   \qquad \mathbb P_{x\neq0}E=E.
\]
Set
\[
   K=\max\left(\{1\}\cup\left\{\langle(n,m)\rangle:
       (n,m)\in\supp\widehat u\cup\supp\widehat E\right\}\right).
\]
Choose $A$ as in
\eqref{eq:Achoice}, with the constant there large enough that
\begin{equation}\label{eq:periodic-binomial-smallness}
   \frac{\norm{E}_{\A^0_t}}{A}\leq\frac12.
\end{equation}
Let $h=h(t)\in C([0,1])$ be a temporal cutoff satisfying
$0\leq h\leq1$ and $hE=E$, and let
\[
   b=P_{\leq\mu^{1/2}}^{x,y}(A-E)^{1/2},
   \qquad w=hb\rho_{\la,\eps},
\]
where $\rho_{\la,\eps}$ is given by Lemma~\ref{lem:slab}.  Assume
\eqref{eq:periodic-frequency-separation}.  Then
\begin{align}
    \operatorname{AF}_S^x(u,w)
      &\leq C_{u,E,S,\eps}A^{1/2}
        \la^{-\left(\frac{1-\eps}{2}+S\eps\right)},
      \label{eq:absolute-mixed}\\
    \operatorname{AF}_S^x(w)
      &\leq \norm{E}_{\A^{-S}_{x\neq0,t}}
       +C_{E,S}\bigl(A\mu^{-S}+A^{-1}\bigr).
      \label{eq:absolute-pure}
\end{align}
Consequently, for every $\eta>0$, one may first choose $A=A(\eta)$ and then
an admissible $\la=\la(\eta,A)$ sufficiently large so that
\begin{equation}\label{eq:periodic-AF-step}
   \operatorname{AF}_S^x(u+w)
      \leq\operatorname{AF}_S^x(u)
        +\norm{E}_{\A^{-S}_{x\neq0,t}}+\eta.
\end{equation}
\end{proposition}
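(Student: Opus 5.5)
Everything rests on the Fourier-support bookkeeping from \eqref{eq:wsupport}. Write $\widehat w(t,k)=h(t)\sum_{j}\widehat\rho_{\la,\eps}(j)\widehat b(t,k-(j,0))$, where $j$ runs over $\supp\widehat\rho_{\la,\eps}\subset\mu\Z\setminus\{0\}$ and $\widehat b$ is supported in $\langle q\rangle\le2\mu^{1/2}$. By \eqref{eq:amplitude-smooth-bounds} and the binomial expansion, $\norm{b}_{\A^0_t}\le\norm{a}_{\A^0_t}\lesssim A^{1/2}$. Each estimate is then a sum over profile frequencies, split by whether the total profile frequency vanishes.

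\emph{Mixed term \eqref{eq:absolute-mixed}.} In $\operatorname{AF}_S^x(u,w)$ the pair $(k_1,k_2)$ has $k_1\in\supp\widehat u$, so $\langle k_1\rangle\le K$, and $k_2=(j,0)+q$. The output is $k_1+k_2=(j,0)+q+k_1$. Since $\abs j\ge\mu$ and $\abs{q}+\abs{k_1}\le2\mu^{1/2}+K\le\mu/2$ by \eqref{eq:periodic-frequency-separation}, we get $\langle k_1+k_2\rangle\gtrsim\abs j$. Thus
\[
\operatorname{AF}_S^x(u,w)\lesssim\norm{u}_{\A^0_t}\norm{b}_{\A^0_t}\sum_{j\in\mu\Z\setminus\{0\}}\abs j^{-S}\,\widehat\rho_{\la,\eps}(j).
\]
Using \eqref{eq:slab-coefficient-bound} and $S>1$, the sum is at most $C\la^{-(1-\eps)/2}\mu^{-S}$. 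With $\mu=\la^\eps$ this gives exactly \eqref{eq:absolute-mixed}.

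\emph{Pure term \eqref{eq:absolute-pure}.} Expand $\widehat w(k_1)\widehat w(k_2)$ over profile frequencies $j_1,j_2$. For terms with $j_1+j_2\neq0$ we have $\abs{j_1+j_2}\ge\mu$, and the same separation argument gives $\langle k_1+k_2\rangle\gtrsim\mu$. Since the coefficients are nonnegative, their total is bounded by
\[
\mu^{-S}\norm{b}_{\A^0_t}^2\Bigl(\sum_j\widehat\rho_{\la,\eps}(j)\Bigr)^2 .
\]
Here $\sum_j\widehat\rho_{\la,\eps}(j)=\rho_{\la,\eps}(0)$, which is as large as $J^{1/2}$. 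This is a concern, since the claim is only $A\mu^{-S}$ with constants depending on $E,S$ (and implicitly $\eps$). I would instead keep the weight $\langle k_1+k_2\rangle^{-S}$ inside the $j$-sum. Summing over $j_1$ for fixed $j_1+j_2=\ell$ uses Cauchy--Schwarz together with $\sum_j\widehat\rho(j)^2=1$, leaving $\sum_{\ell\ne0}\abs\ell^{-S}\lesssim\mu^{-S}$ for $S>1$. This yields $C_{E,S}A\mu^{-S}$, since $\norm{b}_{\A^0_t}^2\lesssim A$.

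For terms with $j_2=-j_1$, the Fourier coefficient at $k$ is
\[
h^2\sum_j\widehat\rho(j)\widehat\rho(-j)\sum_{q_1+q_2=k}\widehat b(q_1)\widehat b(q_2)
=h^2\,\widehat{b^2}(k)\sum_j\widehat\rho(j)^2=h^2\,\widehat{b^2}(k),
\]
using evenness and $\int\rho_{\la,\eps}^2=1$. Absolute summation, however, places absolute values on the $\widehat b$ coefficients, so what actually must be bounded is
\[
\sum_{k_x\neq0}\langle k\rangle^{-S}\sum_{q_1+q_2=k}\norm{\widehat b(q_1)\widehat b(q_2)}_{C_t}.
\]
To handle this, expand $a=A^{1/2}(1-E/A)^{1/2}=A^{1/2}-\tfrac12A^{-1/2}E+A^{1/2}R$. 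The remainder satisfies $\norm{R}_{\A^0_t}\lesssim(\norm{E}_{\A^0_t}/A)^2$, which converges by \eqref{eq:periodic-binomial-smallness}. The truncation $P_{\le\mu^{1/2}}$ leaves the constant and $E$ untouched, since $K\le\mu^{1/2}/16$. In the expansion of $b\cdot b$, the pair constant$\times$constant sits at $k=0$ and is excluded because $k_x=0$. The cross terms constant$\times(-\tfrac12A^{-1/2}E)$, counted twice, contribute exactly $\sum_{k_x\ne0}\langle k\rangle^{-S}\norm{\widehat E(k)}_{C_t}=\norm{E}_{\A^{-S}_{x\neq0,t}}$. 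The $E\times E$ term is $O(A^{-1}\norm{E}_{\A^0_t}^2)$. The constant$\times R$ terms are $O(A\cdot A^{-2}\norm{E}_{\A^0_t}^2)=O(A^{-1})$, and the remaining terms are smaller still.

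\emph{Conclusion \eqref{eq:periodic-AF-step}.} Use the pointwise triangle inequality
\[
\operatorname{AF}_S^x(u+w)\le\operatorname{AF}_S^x(u)+2\operatorname{AF}_S^x(u,w)+\operatorname{AF}_S^x(w).
\]
First choose $A$ large so that $C_{E,S}A^{-1}<\eta/3$. Then take $\la$ large, so that both $A\mu^{-S}$ and the mixed bound fall below $\eta/3$.

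I expect the main obstacle to be the pure term with nonzero total profile frequency. A naive bound loses $\rho(0)^2\sim J$, so the proof must keep the sum over pairs $(j_1,j_2)$ with fixed total $\ell=j_1+j_2$ and apply Cauchy--Schwarz there. The zero-total part also requires care: the constant part of $b$ must be isolated so that the only $O(1)$ contribution is the exact term $\norm{E}_{\A^{-S}_{x\neq0,t}}$.
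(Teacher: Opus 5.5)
Your proposal is correct and follows the paper's proof essentially step for step. At zero total profile frequency you expand the amplitude into constant, linear-in-$E$ and remainder parts, which gives exactly $\norm{E}_{\A^{-S}_{x\neq0,t}}$ plus $O(A^{-1})$. For nonzero total $s=j_1+j_2$ you apply Cauchy--Schwarz over the pairs with that total, which gives $A\mu^{-S}$. For the mixed term you use the frequency gap $\langle k\rangle\gtrsim\abs{j}$ together with \eqref{eq:slab-coefficient-bound}.

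One minor point: when choosing $\la$ at the end, it is twice the mixed bound that must be below $\eta/3$, because of the factor $2$ in the triangle inequality.
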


\begin{proof}
Set $B=hb$ and expand $\operatorname{AF}_S^x(w)$ using the two Fourier
frequencies $j_1,j_2$ of $\rho_{\la,\eps}$.  When $j_1+j_2=0$, the total
frequency is the sum of the two frequencies from $B$.
Since $hE^j=E^j$ whenever $j\geq1$ and
\[
 P_{\leq\mu^{1/2}}^{x,y}1=1,
 \qquad P_{\leq\mu^{1/2}}^{x,y}E=E,
\]
the binomial expansion of $(A-E)^{1/2}$ gives
\begin{equation*}
\begin{split}
   B&=B_0+B_1+B_{\geq2},\\
   B_0&=A^{1/2}h,\\
   B_1&=-\frac{1}{2A^{1/2}}E,\\
   B_{\geq2}
      &=A^{1/2}\sum_{j\geq2}\binom{1/2}{j}(-1)^jA^{-j}
           P_{\leq\mu^{1/2}}^{x,y}(E^j).
\end{split}
\end{equation*}
The series converges absolutely in $\A^S_t$, and both the cutoff and
multiplication by $h$ are contractive there.
The Wiener algebra inequality, the fact that the
cutoff symbol lies between zero and one, and
\eqref{eq:periodic-binomial-smallness} imply
\begin{equation}\label{eq:periodic-amplitude-remainder}
 \norm{B_{\geq2}}_{\A^0_t}
 \leq A^{1/2}\sum_{j\geq2}
          \abs{\binom{1/2}{j}}
          \left(\frac{\norm E_{\A^0_t}}{A}\right)^j
 \leq C\norm E_{\A^0_t}^2A^{-3/2}.
\end{equation}
\begin{equation}\label{eq:periodic-amplitude-A0}
   \norm{B}_{\A^0_t}\leq C_EA^{1/2}.
\end{equation}

For $F,H\in\A^0_t$,
\begin{equation}\label{eq:AF-by-A0}
   \operatorname{AF}_S^x(F,H)
      \leq\norm{F}_{\A^0_t}\norm{H}_{\A^0_t}.
\end{equation}
For $B=B_0+B_1+B_{\geq2}$,
\begin{equation*}
   \operatorname{AF}_S^x(B)
      \leq\sum_{i,j\in\{0,1,\geq2\}}
          \operatorname{AF}_S^x(B_i,B_j).
\end{equation*}
The contribution linear in $E$ has the exact value
\begin{equation*}
   \operatorname{AF}_S^x(B_0,B_1)
       +\operatorname{AF}_S^x(B_1,B_0)
   =\sum_{\substack{k\in\Z^2\\k_x\neq0}}
       \langle k\rangle^{-S}
       \norm{h\widehat E(\cdot,k)}_{C_t}
     =\norm{E}_{\A^{-S}_{x\neq0,t}}.
\end{equation*}
All other nonzero contributions contain either two factors equal to $B_1$ or
at least one factor equal to $B_{\geq2}$.  By
\eqref{eq:AF-by-A0}, \eqref{eq:periodic-amplitude-remainder}, and $A\geq1$,
their sum is bounded by $C_EA^{-1}$.  It follows that
\begin{equation}\label{eq:periodic-amplitude-AF}
   \operatorname{AF}_S^x(B)
      \leq\norm{E}_{\A^{-S}_{x\neq0,t}}+C_{E,S}A^{-1}.
\end{equation}

Write
\begin{equation*}
   \rho_{\la,\eps}(x)=\sum_{j\in\Z}c_je^{2\pi ijx},
   \qquad c_j=\widehat\rho_{\la,\eps}(j).
\end{equation*}
Lemma~\ref{lem:slab} gives
\begin{equation}\label{eq:periodic-profile-coefficients}
\begin{gathered}
   c_j\geq0,\qquad c_{-j}=c_j,\qquad
   c_j=0\quad\text{unless}\quad
       j\in(\mu\Z\setminus\{0\})\cap[-2\nu,2\nu],\\
   \sum_{j\in\Z}c_j^2=1,\qquad
   c_j\leq\norm{\rho_{\la,\eps}}_{L^1(\T)}
       \lesssim_\eps\la^{-\frac12(1-\eps)}.
\end{gathered}
\end{equation}
For $k\in\Z^2$,
\begin{equation}\label{eq:periodic-w-Fourier}
   \widehat w(t,k)
      =\sum_{j\in\Z}c_j
          \widehat B(t,k-(j,0)).
\end{equation}
For $j\in\supp\widehat\rho_{\la,\eps}$, the sets
$(j,0)+\supp\widehat B$ in \eqref{eq:periodic-w-Fourier} are pairwise
disjoint.  Indeed, distinct nonzero frequencies in
$\supp\widehat\rho_{\la,\eps}$ differ by at least
$\mu$, whereas $\widehat B$ is supported where
$\langle\ell\rangle\leq2\mu^{1/2}$, and $4\mu^{1/2}<\mu$ for
$\mu\geq64$.  Thus at most one summand in
\eqref{eq:periodic-w-Fourier} is nonzero for each $k$, and
\begin{equation*}
\begin{split}
   \operatorname{AF}_S^x(w)
   ={}&\sum_{j_1,j_2\in\Z}\abs{c_{j_1}c_{j_2}}
      \sum_{\substack{\ell_1,\ell_2\in\Z^2\\
       j_1+j_2+\ell_{1,x}+\ell_{2,x}\neq0}}
       \langle(j_1+j_2,0)+\ell_1+\ell_2\rangle^{-S}\\
   &\hspace{35mm}\times
       \norm{\widehat B(\cdot,\ell_1)
              \widehat B(\cdot,\ell_2)}_{C_t}.
\end{split}
\end{equation*}

Consider first the terms with $j_1+j_2=0$.  Their inner sum is exactly
$\operatorname{AF}_S^x(B)$: the remaining frequency is
$\ell_1+\ell_2$, and the definition retains exactly the terms with
$\ell_{1,x}+\ell_{2,x}\neq0$; the transverse sum is unrestricted.  The
coefficient of this inner sum is
\begin{equation*}
   \sum_{j\in\Z}\abs{c_jc_{-j}}
      =\sum_{j\in\Z}c_j^2=1.
\end{equation*}
For the remaining terms, group them by $s=j_1+j_2$.  Cauchy--Schwarz and
\eqref{eq:periodic-profile-coefficients} give
\[
 \sum_{j\in\Z}\abs{c_jc_{s-j}}
 \leq\left(\sum_{j\in\Z}c_j^2\right)^{1/2}
      \left(\sum_{j\in\Z}c_{s-j}^2\right)^{1/2}=1.
\]
A nonzero $s$ lies in $\mu\Z$, and on the amplitude support
\[
   \abs{\ell_{1,x}+\ell_{2,x}}
      \leq4\mu^{1/2}\leq\frac12\abs s.
\]
Consequently,
\begin{equation*}
\begin{aligned}
 \sum_{\substack{j_1,j_2\in\Z\\j_1+j_2\neq0}}\!
   \abs{c_{j_1}c_{j_2}}
   \sum_{\ell_1,\ell_2\in\Z^2}
       \langle(j_1+j_2,0)+\ell_1+\ell_2\rangle^{-S}
       \norm{\widehat B(\cdot,\ell_1)
              \widehat B(\cdot,\ell_2)}_{C_t}
 &\leq C_S\norm{B}_{\A^0_t}^2
       \sum_{s\in\mu\Z\setminus\{0\}}\abs s^{-S}\\
 &= C_S\mu^{-S}\norm{B}_{\A^0_t}^2
       \sum_{j\in\Z\setminus\{0\}}\abs j^{-S}\\
 &\leq C_{E,S}A\mu^{-S}.
\end{aligned}
\end{equation*}
Combining the terms with $j_1+j_2=0$ and $j_1+j_2\neq0$ with
\eqref{eq:periodic-amplitude-AF} proves \eqref{eq:absolute-pure}.

For the mixed term, let $q\in\supp\widehat u$,
$\ell\in\supp\widehat B$, and
$j\in\supp\widehat\rho_{\la,\eps}\setminus\{0\}$.  From
$\abs{q_x}\leq K$, $\abs{\ell_x}\leq2\mu^{1/2}$,
$16K\leq\mu^{1/2}$, and $\mu\geq64$, we have
\begin{equation}\label{eq:mixed-frequency-gap}
   \abs{q_x+j+\ell_x}
      \geq\abs j-K-2\mu^{1/2}
      \geq\frac12\abs j.
\end{equation}
Substituting \eqref{eq:periodic-w-Fourier} into
Definition~\ref{def:Q} and using \eqref{eq:mixed-frequency-gap} gives
\begin{equation}\label{eq:mixed-expanded}
\begin{split}
   \operatorname{AF}_S^x(u,w)
      &\leq\sum_{q\in\Z^2}\sum_{j\in\Z\setminus\{0\}}
       \sum_{\ell\in\Z^2}
       \langle q+(j,0)+\ell\rangle^{-S}\abs{c_j}
       \norm{\widehat u(\cdot,q)
              \widehat B(\cdot,\ell)}_{C_t}\\
      &\leq C_S\norm{u}_{\A^0_t}\norm{B}_{\A^0_t}
          \sum_{j\in\Z\setminus\{0\}}\abs{c_j}\abs j^{-S}.
\end{split}
\end{equation}
By \eqref{eq:periodic-profile-coefficients}, every nonzero
$j\in\supp\widehat\rho_{\la,\eps}$ belongs to $\mu\Z$, and
\begin{equation}\label{eq:profile-weighted-l1}
 \sum_{j\in\Z\setminus\{0\}}\abs{c_j}\abs j^{-S}
 \leq\norm{\rho_{\la,\eps}}_{L^1}
      \sum_{p\in\Z\setminus\{0\}}\abs{\mu p}^{-S}
 \leq C_{S,\eps}\la^{-\frac12(1-\eps)}\mu^{-S}.
\end{equation}
Equations \eqref{eq:periodic-amplitude-A0}, \eqref{eq:mixed-expanded}, and
\eqref{eq:profile-weighted-l1} prove \eqref{eq:absolute-mixed}.

Finally, by Definition~\ref{def:Q},
\begin{equation}\label{eq:Qexpansion}
   \operatorname{AF}_S^x(u+w)
      \leq\operatorname{AF}_S^x(u)
       +2\operatorname{AF}_S^x(u,w)+\operatorname{AF}_S^x(w).
\end{equation}
Given $\eta>0$, first increase $A$ so that the binomial series converges and
$C_{E,S}A^{-1}<\eta/3$.  Keeping $A$ fixed, choose an admissible $\la$ so
large that \eqref{eq:periodic-frequency-separation} holds and
\[
   C_{E,S}A\mu^{-S}<\frac\eta3,
   \qquad
   2C_{u,E,S,\eps}A^{1/2}
       \la^{-\left(\frac{1-\eps}{2}+S\eps\right)}<\frac\eta3.
\]
Inserting these estimates into \eqref{eq:Qexpansion} proves
\eqref{eq:periodic-AF-step}.
\end{proof}

\subsection{Periodic quadratic iteration}

\begin{proposition}\label{prop:onestep}
Fix $d\in\{3,5\}$, $\kappa\in\{-1,1\}$, and
\[
   0<\eps<1,
   \qquad 0<\beta<\frac12(1-\eps),
   \qquad S>d+1.
\]
Let $(u,E)$ be a smooth real pair with finite spatial Fourier support that
satisfies \eqref{eq:relaxed-general} with $r=2$.  Assume that both $u$ and
$E$ have zero $x$-mean and are supported in a closed interval
$I\Subset(0,1)$.
Given finitely many $1\leq p_j<2$ and numbers $\eta,\delta_+>0$, there is
$A_0\geq1$ such that the following holds for every $A\geq A_0$ and every
sufficiently large admissible $\la$.  For
$h\in C_c^\infty((0,1))$ satisfying \eqref{eq:time-cutoff-lemma} with
$\tau=\la^{-\beta}$, define $w$ by \eqref{eq:increment} and
$(u^+,E^+)$ by \eqref{eq:periodic-update-general} with $r=2$.  Then
$(u^+,E^+)$ is a smooth real pair with finite spatial Fourier support,
satisfies the relaxed equation, and both $u^+$ and $E^+$ have zero
$x$-mean.  Moreover,
\begin{align}
    \norm{w}_{C_tL^{p_j}}&<\eta
       \quad\text{for every }j,\notag\\
    \norm{E^+}_{\A^{-S}_{x\neq0,t}}&<\delta_+,
       \label{eq:onesteperror}\\
    \operatorname{AF}_S^x(u^+)
       &\leq\operatorname{AF}_S^x(u)
          +\norm{E}_{\A^{-S}_{x\neq0,t}}+\eta.
       \label{eq:onestepQ}
\end{align}
The Fourier support of $w$ satisfies \eqref{eq:wsupport} and is disjoint
from $\supp\widehat u\cup\supp\widehat E$.  If
$I=[t_-,t_+]$, then both $u^+$ and $E^+$ are supported in
$(t_--\la^{-\beta},t_++\la^{-\beta})\Subset(0,1)$.
\end{proposition}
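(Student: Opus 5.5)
The proof assembles the estimates already proved in this section, in a fixed order of parameter choices: first $A$, then $\la$. Since the statement lets $A\geq A_0$ be arbitrary, $A_0$ must absorb every requirement on $A$. These are the lower bound \eqref{eq:Achoice}, convergence of the binomial series, the smallness \eqref{eq:periodic-binomial-smallness}, and $C_{E,S}A^{-1}<\eta/3$ from the proof of Proposition~\ref{prop:periodic-absolute-summability}. With $A$ fixed, all remaining conditions are of the form ``$\la$ sufficiently large''. These are \eqref{eq:periodic-frequency-separation}, $\la^{-\beta}<\operatorname{dist}(I,\{0,1\})$ (which lets us choose $h$ as in \eqref{eq:time-cutoff-lemma} with $\tau=\la^{-\beta}$), and the smallness requirements below.

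Structural properties come first.
\begin{enumerate}[label=\textup{(\roman*)}]
 \item Since $b=P^{x,y}_{\leq\mu^{1/2}}a$ and $\rho_{\la,\eps}$ are real trigonometric polynomials in space, $w=hb\rho_{\la,\eps}$ is smooth and real with finite spatial Fourier support.
 \item $w$ has zero $x$-mean by \eqref{eq:wsupport}, since $\abs n\geq\mu/2$ there.
 \item $E^+$ is given by \eqref{eq:periodic-update-general}, which contains $\mathbb P_{x\neq0}$ and finitely supported terms, so it is smooth, real and zero $x$-mean. Its time smoothness uses $a\in C^1_t$ with values in the Wiener spaces, by Lemma~\ref{lem:amplitude-cutoff}, together with the smoothness of $h$. (If more time regularity is needed, I would note that $E$ is smooth in $t$ and the binomial series preserves this.)
 \item The relaxed equation for $(u^+,E^+)$ is the algebraic identity stated after \eqref{eq:periodic-update-general}. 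One applies $\partial_x$ to $E^+-E$ and uses $\partial_x\partial_x^{-1}=\mathbb P_{x\neq0}$ on zero-mean functions and $\partial_x\mathbb P_{x\neq0}=\partial_x$.
 \item Disjointness of $\supp\widehat w$ from $\supp\widehat u\cup\supp\widehat E$ follows from \eqref{eq:wsupport}: there $\abs n\geq\mu/2>K$ by $16K\leq\mu^{1/2}$.
 \item The time support follows since $\supp h\subset(t_--\la^{-\beta},t_++\la^{-\beta})$, $u$ and $E$ are supported in $I$, and every term of $E^+$ other than $E$ carries a factor $h$ or $h'$ or involves $u$.
\end{enumerate}

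For the quantitative bounds:
\begin{enumerate}[label=\textup{(\roman*)}]
 \item The $L^{p_j}$ bound follows from \eqref{eq:wLp}: for fixed $A$, the decay $\la^{(1-\eps)(1/2-1/p_j)}\to0$ since $p_j<2$, and there are finitely many $j$.
 \item The error bound \eqref{eq:onesteperror} follows from \eqref{eq:onestep-total-error}, whose right side tends to zero for $S>d+1$ and $\beta<\frac12(1-\eps)$, with $A$ fixed. The decomposition $E^+=E_O+E_N+E_D+E_T$ is exact because $(u+w)^2-u^2=2uw+w^2$.
 \item For \eqref{eq:onestepQ}, apply \eqref{eq:periodic-AF-step} of Proposition~\ref{prop:periodic-absolute-summability}. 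Its hypotheses are $S>1$, zero $x$-means, $0\leq h\leq1$, and $hE=E$. The last holds since $h=1$ on $I\supset\supp_tE$.
\end{enumerate}

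The only delicate point, and the one I expect to be the main obstacle, is uniformity. The constants in \eqref{eq:onestep-total-error} depend on $A$, and \eqref{eq:absolute-pure} has the competing terms $A\mu^{-S}$ and $A^{-1}$. So $A_0$ must be fixed from the $A^{-1}$ term alone, and then, for each $A\geq A_0$, one takes $\la\geq\la_0(A)$. This matches the quantifier order in the statement (``for every $A\geq A_0$ and every sufficiently large admissible $\la$''), where the threshold for $\la$ may depend on $A$.
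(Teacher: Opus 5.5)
Your proposal is correct and follows the paper's proof essentially step for step. You fix $A_0$ from \eqref{eq:Achoice}, \eqref{eq:periodic-binomial-smallness}, and the requirement $C_{E,S}A^{-1}<\eta/3$; then, for each fixed $A\geq A_0$, you take admissible $\la$ large and invoke \eqref{eq:wLp}, \eqref{eq:onestep-total-error}, and \eqref{eq:absolute-mixed}--\eqref{eq:absolute-pure} with \eqref{eq:Qexpansion}. Your explicit checks of the structural claims (zero $x$-mean, the relaxed identity, disjointness via $\mu/2>K$, and the time support) fill in details that the paper leaves implicit.
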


\begin{proof}
Set
\[
   K=\max\left(\{1\}\cup\left\{\langle(n,m)\rangle:
       (n,m)\in\supp\widehat u\cup\supp\widehat E\right\}\right).
\]
Choose $A_0$ large enough that every $A\geq A_0$ satisfies
\eqref{eq:Achoice}, \eqref{eq:periodic-binomial-smallness}, and
\begin{equation}\label{eq:onestep-A-choice}
   C_{E,S}A^{-1}<\frac\eta3.
\end{equation}
Fix any $A\geq A_0$ for the rest of the proof.  By \eqref{eq:Achoice},
$A-E\geq A/2$; hence the amplitude is smooth and real, and
Lemma~\ref{lem:amplitude-cutoff} applies.

Choose an admissible $\la$ sufficiently large that Lemma~\ref{lem:slab}
applies, \eqref{eq:periodic-frequency-separation} holds, and
$\la^{-\beta}<\operatorname{dist}(I,\{0,1\})$.  If $I=[t_-,t_+]$,
choose $h$ from \eqref{eq:time-cutoff-lemma} with
$\tau=\la^{-\beta}$.  Since $E$ is supported in $I$, $hE=E$.
Define $b,w,u^+$, and $E^+$ by \eqref{eq:increment} and
\eqref{eq:periodic-update-general}.

By \eqref{eq:wsupport} and \eqref{eq:periodic-frequency-separation},
$\supp\widehat w$ is disjoint from
$\supp\widehat u\cup\supp\widehat E$.

For all sufficiently large admissible $\la$, \eqref{eq:wLp} gives
$\norm w_{C_tL^{p_j}}<\eta$ for every $j$.

The combined error estimate \eqref{eq:onestep-total-error} tends to zero
as $\la\to\infty$ through admissible values, which proves
\eqref{eq:onesteperror}.

Finally, \eqref{eq:absolute-mixed}, \eqref{eq:absolute-pure}, and
\eqref{eq:Qexpansion} give
\begin{equation*}
 \operatorname{AF}_S^x(u^+)
 \leq\operatorname{AF}_S^x(u)
      +\norm E_{\A^{-S}_{x\neq0,t}}
      +C_{E,S}(A^{-1}+A\mu^{-S})
      +2C_{u,E,S,\eps}A^{1/2}
         \la^{-\left(\frac{1-\eps}{2}+S\eps\right)}.
\end{equation*}
Equation~\eqref{eq:onestep-A-choice} controls the $A^{-1}$ term.  With
$A$ fixed, the other two terms tend to zero as $\la\to\infty$ through
admissible values, proving \eqref{eq:onestepQ}.
\end{proof}

\section{\texorpdfstring{Whole-space localization and the $x$-frequency gap}
{Whole-space localization and the x-frequency gap}}

We use the Euclidean--periodic decoupling inequality~\cite[Lemma~2.11]{KSflex}.  In the form needed here, for
$1\leq p\leq\infty$, $a\in W^{3,p}(\R^2)$, and a periodically extended
$U\in L^p(\T^2)$, it gives
\begin{equation}\label{eq:Euclidean-periodic-decoupling}
    \norm{aU}_{L^p(\R^2)}
    \lesssim
    \norm{a}_{W^{3,p}(\R^2)}
    \norm{U}_{L^p(\T^2)}.
\end{equation}
The obstruction to a compactly supported amplitude is the transverse term
in the error:
\[
    \kappa\partial_x^{-2}\partial_y^2w,
    \qquad
    \widehat{\bigl(\kappa\partial_x^{-2}\partial_y^2w\bigr)}(\xi,\eta)
    =\kappa\frac{\eta^2}{\xi^2}\widehat w(\xi,\eta),
\]
which is singular on the entire hyperplane $\{\xi=0\}$.  If one takes
$w=a(x,y)e^{2\pi iKx}$ with $a\in C_c^\infty(\R^2)$, then
\[
    \widehat w(0,\eta)=\widehat a(-K,\eta).
\]
If $\widehat a(-K,\eta_0)\neq0$ for some $\eta_0\neq0$, continuity implies
that $(\eta^2/\xi^2)\widehat w(\xi,\eta)$ is not locally integrable near
$(0,\eta_0)$.

We therefore project the localized amplitude to low frequencies.  The
resulting Schwartz perturbation is supported away from $\{\xi=0\}$, where
$\partial_x^{-1}$ and $\partial_x^{-2}$ are well defined.

\subsection{Whole-space Wiener estimates and cutoff independence}

We measure errors in differentiated form:
\begin{equation}\label{eq:Euclidean-error-norm}
    \norm{\partial_xF}_{\A^{-S}_{\R^2,t}}
    =2\pi\int_{\R^2}\abs\xi\langle k\rangle^{-S}
      \norm{\widehat F(\cdot,k)}_{C_t}\,dk.
\end{equation}
For the transverse part, differentiation leaves the symbol
$\eta^2/\abs\xi$; the exact gap and the cone estimate
\eqref{eq:Euclidean-cone-ratio} control this weight.

If $u\in C_tL^1(\R^2)$ and
$\operatorname{AF}^{\partial_x}_{S,r}(u)<\infty$, then
$t\mapsto\partial_x[u^r](t)$ is continuous in $\A^{-S}_{\R^2}$.
Indeed, let $t_j\to t$.  Then
$\widehat u(t_j,k)\to\widehat u(t,k)$ for every
$k$.  The difference of the products in
\eqref{eq:Euclidean-nonlinearity} is bounded by
$2\prod_{j=1}^r\norm{\widehat u(\cdot,k_j)}_{C_t}$, the integrable
majorant in \eqref{eq:Euclidean-Q}.  Dominated convergence against
\eqref{eq:Euclidean-Q}, followed by the change of variables
$k=k_1+\cdots+k_r$, therefore gives
\[
 \norm{\partial_x[u^r](t_j)-\partial_x[u^r](t)}
       _{\A^{-S}_{\R^2}}\longrightarrow0.
\]
If $\widehat u(t,\xi,\eta)=0$ for $\abs\xi<c_0$, then, for every
$\phi\in\mathcal S(\R^2)$ and every fixed time,
\begin{equation}\label{eq:Euclidean-transverse-pairing}
 \abs{\ip{\partial_x^{-1}\partial_y^2u}{\phi}}
 \leq \frac{2\pi}{c_0}\norm{u}_{L^1}
 \int_{\R^2}\eta^2\abs{\widehat\phi(\xi,\eta)}\,d\xi\,d\eta.
\end{equation}
Thus the transverse term is a tempered distribution.  For fixed $c_0>0$,
the map $u\mapsto\partial_x^{-1}\partial_y^2u$ is continuous from
$\{u\in L^1:\widehat u=0\text{ for }\abs\xi<c_0\}$ to
$\mathcal S'(\R^2)$.

\begin{proposition}
\label{prop:Euclidean-cutoff}
Fix $r\in\{2,3\}$ and $S>0$.  Suppose $u\in C_tL^1(\R^2)$ satisfies
$\operatorname{AF}^{\partial_x}_{S,r}(u)<\infty$.  Let $P_N$ have a time-independent
multiplier $m_N$ with $\sup_N\norm{m_N}_{L^\infty}<\infty$ and
$m_N(k)\to1$ for almost every $k$.  In \eqref{eq:Euclidean-nonlinearity},
the integrand defining $\partial_x[(P_Nu)^r]$ contains the additional
factor $\prod_{j=1}^rm_N(k_j)$.  Then
\begin{equation}\label{eq:Euclidean-cutoff-independence}
 \partial_x[(P_Nu)^r]\longrightarrow\partial_x[u^r]
 \quad\text{in }\A^{-S}_{\R^2,t}.
\end{equation}
In particular, this includes every spatial mollifier
$m_\eps(k)=\widehat\rho(\eps k)$ with $\rho\in\mathcal S(\R^2)$ and
$\int\rho=1$.  If in addition $u\in C_tL^r(\R^2)$, then
$\partial_x[u^r]=\partial_x(u^r)$ in the ordinary distributional sense.
\end{proposition}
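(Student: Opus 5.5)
The argument follows the proof of Proposition~\ref{prop:cutoff}, with sums replaced by integrals and dominated convergence on $(\R^2)^r$. First fix $t$. By \eqref{eq:Euclidean-nonlinearity} with the extra factor $\prod_j m_N(k_j)$, after the change of variables $k=k_1+\cdots+k_r$,
\[
 \norm{\partial_x[(P_Nu)^r]-\partial_x[u^r]}_{\A^{-S}_{\R^2,t}}
 \leq 2\pi\int_{(\R^2)^r}\Bigl|\prod_{j=1}^rm_N(k_j)-1\Bigr|
 \abs{\xi_1+\cdots+\xi_r}\langle k_1+\cdots+k_r\rangle^{-S}
 \norm{\prod_{j=1}^r\widehat u(\cdot,k_j)}_{C_t}\prod_j dk_j .
\]
Here the supremum over $t$ is moved inside the $k$-integral, exactly as in the definition of the $\A^{-S}_{\R^2,t}$ norm, and Tonelli is applied. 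The multiplier is time-independent, so the factor $\abs{\prod m_N-1}$ comes out of the $C_t$ norm.

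The bracket is bounded by $(\sup_N\norm{m_N}_{L^\infty})^r+1$. It tends to zero for almost every $(k_1,\dots,k_r)$, since the set where some $m_N(k_j)\not\to1$ is a finite union of null sets in $(\R^2)^r$. The remaining integrand is integrable by \eqref{eq:Euclidean-Q}, so dominated convergence gives \eqref{eq:Euclidean-cutoff-independence}. For a mollifier, $m_\eps(k)=\widehat\rho(\eps k)\to\widehat\rho(0)=1$ for every $k$, and $\abs{m_\eps}\leq\norm\rho_{L^1}$, so the hypotheses hold.

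For the last claim, assume $u\in C_tL^r$ and take $P_N$ a mollifier, so $P_Nu\to u$ in $L^r$ at each fixed $t$. For the smooth function $P_Nu\in L^1\cap L^\infty$, the ordinary product $(P_Nu)^r$ lies in $L^1$. Its Fourier transform is the iterated convolution of $m_N\widehat u$; this is justified because each $\widehat{P_Nu}$ is bounded and the convolution integral converges absolutely, which is what the finiteness of the AF quantity with the bounded factor guarantees almost everywhere. Hence $\partial_x[(P_Nu)^r]=\partial_x((P_Nu)^r)$ as distributions.

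As in Proposition~\ref{prop:cutoff}, H\"older's inequality gives $(P_Nu)^r\to u^r$ in $L^1$, hence $\partial_x((P_Nu)^r)\to\partial_x(u^r)$ in $\mathcal S'$. Convergence in $\A^{-S}_{\R^2}$ also implies convergence in $\mathcal S'$, because
\[
 \abs{\ip{G}{\phi}}\leq\norm G_{\A^{-S}}\sup\langle k\rangle^S\abs{\widehat\phi}.
\]
Uniqueness of distributional limits then identifies $\partial_x[u^r]$ with $\partial_x(u^r)$.

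The main obstacle is the identification step for smooth truncations: matching the Fourier transform of the $L^1$ product with the absolutely convergent convolution integral. I would handle it by applying Fubini against a Schwartz test function, using the finiteness of the AF quantity as the integrable majorant. The dominated convergence steps themselves are routine.
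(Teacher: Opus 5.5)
Your proposal is correct and follows the paper's proof: the same Minkowski bound after the change of variables $k=k_1+\cdots+k_r$, dominated convergence with majorant $\bigl((\sup_N\norm{m_N}_{L^\infty})^r+1\bigr)$ times the integrand of \eqref{eq:Euclidean-Q}, and then mollification plus H\"older's inequality to identify $\partial_x[u^r]$ with $\partial_x(u^r)$. The paper leaves implicit the identification $\partial_x[(P_Nu)^r]=\partial_x((P_Nu)^r)$ that you flag; for Schwartz mollifiers it is immediate, since $\widehat{P_Nu}=\widehat\rho(\eps\,\cdot)\,\widehat u\in L^1$ lets Fourier inversion and Fubini apply directly, without needing the AF majorant.
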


\begin{proof}
After the change of variables
$k=k_1+\cdots+k_r$, Minkowski's integral inequality bounds the norm of
the difference in \eqref{eq:Euclidean-cutoff-independence} by
\[
 2\pi\int_{(\R^2)^r}\abs{\xi_1+\cdots+\xi_r}
 \langle k_1+\cdots+k_r\rangle^{-S}
 \abs{\prod_{j=1}^rm_N(k_j)-1}
 \norm{\prod_{j=1}^r\widehat u(\cdot,k_j)}_{C_t}
 \prod_{j=1}^rdk_j.
\]
The integrand tends to zero almost everywhere and is bounded by
$\bigl((\sup_N\norm{m_N}_{L^\infty})^r+1\bigr)$ times the integrand in
\eqref{eq:Euclidean-Q}.  Dominated convergence proves
\eqref{eq:Euclidean-cutoff-independence}.  For spatial mollifiers,
$u*\rho_\eps\in C_t(L^1\cap L^\infty)$, so
$(u*\rho_\eps)^r\in C_tL^1$.  If $u\in C_tL^r$, take
$P_Nu=u*\rho_{\eps_N}$ with $\eps_N\downarrow0$.  Then $P_Nu\to u$ in
$C_tL^r$, and H\"older's inequality gives
\[
 \norm{(P_Nu)^r-u^r}_{C_tL^1}
 \leq\sum_{j=0}^{r-1}
  \norm{P_Nu}_{C_tL^r}^{r-1-j}\norm u_{C_tL^r}^j
  \norm{P_Nu-u}_{C_tL^r}\longrightarrow0.
\]
Thus the distributional limit is $\partial_x(u^r)$, while
\eqref{eq:Euclidean-cutoff-independence} gives $\partial_x[u^r]$.
\end{proof}

\subsection{A spatial cutoff with nonnegative Fourier transform}

\begin{lemma}
\label{lem:Euclidean-positive-approximation}
There exists a real, even function $\chi\in C_c^\infty(\R^2)$ such that
\begin{equation*}
   0\leq\chi\leq1,
   \qquad \chi(0)=1,
   \qquad \widehat\chi\geq0,
   \qquad \int_{\R^2}\widehat\chi(k)\,dk=1.
\end{equation*}
Put $\chi_L(z)=\chi(z/L)$.  For every $L\geq1$,
\begin{equation}\label{eq:Euclidean-spatial-cutoff-moment}
\begin{gathered}
   \widehat{\chi_L}\geq0,
   \qquad
   \int_{\R^2}\widehat{\chi_L}(k)\,dk=1,\\
   \int_{\R^2}\abs z
      \bigl(\widehat{\chi_L}*\widehat{\chi_L}\bigr)(z)\,dz
      \leq\frac{C_\chi}{L}.
\end{gathered}
\end{equation}
If $S\geq1$ and $F\in L^1(\R^2)$ is nonnegative, then
\begin{equation*}
 2\pi\int_{\R^2}\abs\xi\langle k\rangle^{-S}
       \bigl[(\widehat{\chi_L}*\widehat{\chi_L})*F\bigr](k)\,dk
 \leq
  2\pi\int_{\R^2}\abs\xi\langle k\rangle^{-S}F(k)\,dk
  +\frac{C_{\chi,S}}{L}\norm F_{L^1(\R^2)}.
\end{equation*}
For $A>0$,
\begin{equation}\label{eq:localized-constant-scaling}
    \norm{\partial_x(A\chi_L^2)}_{\A^{-S}_{\R^2}}
      \leq \frac{C_\chi A}{L}.
\end{equation}
\end{lemma}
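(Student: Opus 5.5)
The construction is the standard squaring trick.  Fix a nonzero real, even $\psi\in C_c^\infty(\R^2)$ supported in the unit ball and put $\theta=\psi*\psi$, which is real, even, compactly supported, and satisfies $\widehat\theta=\widehat\psi^2\geq0$, with $\theta(0)=\norm\psi_{L^2}^2>0$.  Since $\widehat\theta\geq0$ and $\widehat\theta\in\mathcal S$, Fourier inversion gives
\[
\abs{\theta(z)}\leq\int\widehat\theta=\theta(0).
\]
I would therefore set $\chi=\theta/\theta(0)$.  This $\chi$ is real, even, and compactly supported, with $\chi(0)=1$, $\widehat\chi\geq0$, and $\int\widehat\chi=\chi(0)=1$.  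The bound $\chi\leq1$ follows from the inequality above.  The remaining requirement $\chi\geq0$ is not automatic.  To obtain it, I would take $\psi\geq0$ and radially nonincreasing; then $\psi*\psi\geq0$ pointwise, which gives $0\leq\chi\leq1$.

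For the scaled cutoff, $\widehat{\chi_L}(k)=L^2\widehat\chi(Lk)\geq0$, and $\int\widehat{\chi_L}=\chi_L(0)=1$.  Moreover $\widehat{\chi_L}*\widehat{\chi_L}=\widehat{\chi_L^2}=L^2\widehat{\chi^2}(L\,\cdot)$.  The function $\widehat{\chi^2}=\widehat\chi*\widehat\chi$ is nonnegative and Schwartz, so $\int\abs z\,\widehat{\chi^2}(z)\,dz=:C_\chi<\infty$.  The change of variables $z\mapsto z/L$ then gives the moment bound $C_\chi/L$.

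For the convolution inequality, write $G=\widehat{\chi_L}*\widehat{\chi_L}\geq0$, which has total mass $1$.  By Tonelli,
\[
\int\abs\xi\langle k\rangle^{-S}(G*F)(k)\,dk=\iint G(z)F(k')\,\abs{\xi'+z_x}\,\langle k'+z\rangle^{-S}\,dz\,dk'.
\]
The key step is the pointwise bound on the weight $g(k)=\abs\xi\langle k\rangle^{-S}$.  For $S\geq1$ this weight is Lipschitz: at points with $\xi\neq0$ its gradient is bounded by
\[
\langle k\rangle^{-S}+S\abs\xi\,\abs k\,\langle k\rangle^{-S-2}\leq 1+S,
\]
and $g$ is continuous across $\{\xi=0\}$.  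Hence $g(k'+z)\leq g(k')+(1+S)\abs z$.  Integrating against $G(z)F(k')$ and using $\int G=1$ together with the moment bound yields the stated inequality with $C_{\chi,S}=2\pi(1+S)C_\chi$.  This Lipschitz estimate is the only step needing care, and it is where the hypothesis $S\geq1$ enters.

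Finally, $\widehat{\chi_L^2}=G$, so
\[
\norm{\partial_x(A\chi_L^2)}_{\A^{-S}_{\R^2}}=2\pi A\int\abs\xi\langle k\rangle^{-S}G(k)\,dk\leq2\pi A\int\abs kG(k)\,dk\leq 2\pi AC_\chi/L,
\]
which is \eqref{eq:localized-constant-scaling} after renaming the constant.
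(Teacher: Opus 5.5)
Your proposal is correct and follows essentially the same route as the paper: $\chi=\psi*\psi/\norm\psi_{L^2}^2$ with $\psi\geq0$ even; scaling of $\widehat{\chi_L}*\widehat{\chi_L}=\widehat{\chi_L^2}$ for the first-moment bound; the global Lipschitz bound for $\abs\xi\langle k\rangle^{-S}$ together with Tonelli for the convolution inequality; and $\abs\xi\langle k\rangle^{-S}\leq\abs k$ for the last estimate. Two small remarks: radial monotonicity of $\psi$ is unnecessary, and your gradient bound $1+S$ holds for every $S\geq0$, so $S\geq1$ is not what makes the Lipschitz step work (it only makes the weight bounded, which this argument does not use).
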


\begin{proof}
Choose a nonzero, nonnegative, real, even function
$\psi\in C_c^\infty(\R^2)$ and set
\[
   \chi=\frac{\psi*\psi}{\norm\psi_{L^2}^2}.
\]
Then $\chi$ is real, even, nonnegative, and compactly supported.  Moreover,
\[
   \chi(0)=\frac{1}{\norm\psi_{L^2}^2}
       \int_{\R^2}\psi(y)^2\,dy=1,
\]
and Cauchy--Schwarz gives, for every $z\in\R^2$,
\[
   0\leq\chi(z)
    =\frac{1}{\norm\psi_{L^2}^2}
       \int_{\R^2}\psi(y)\psi(y-z)\,dy
    \leq1.
\]
The Fourier transform satisfies
\[
   \widehat\chi(k)
      =\frac{\abs{\widehat\psi(k)}^2}{\norm\psi_{L^2}^2}\geq0,
   \qquad
   \int_{\R^2}\widehat\chi(k)\,dk=\chi(0)=1.
\]
The Fourier scaling formula gives
\[
   \widehat{\chi_L}(k)=L^2\widehat\chi(Lk)\geq0,
   \qquad \int_{\R^2}\widehat{\chi_L}(k)\,dk=1.
\]
Furthermore,
\[
 \int_{\R^2}\abs z
      \bigl(\widehat{\chi_L}*\widehat{\chi_L}\bigr)(z)\,dz
 =\frac1L\int_{\R^2}\abs z
      \bigl(\widehat\chi*\widehat\chi\bigr)(z)\,dz
 \leq\frac{C_\chi}{L}.
\]
If $S\geq1$, the function
$k\mapsto2\pi\abs\xi\langle k\rangle^{-S}$ is bounded and globally
Lipschitz.  Away from $\xi=0$,
\[
   \abs{\nabla\bigl(2\pi\abs\xi\langle k\rangle^{-S}\bigr)}
   \leq C_S\left(\langle k\rangle^{-S}
       +\abs\xi\abs k\langle k\rangle^{-S-2}\right)
   \leq C_S.
\]
Integrating along a line segment, splitting once at $\xi=0$ if necessary,
gives
\begin{equation*}
   2\pi\abs{\xi+z_x}\langle k+z\rangle^{-S}
   \leq2\pi\abs\xi\langle k\rangle^{-S}+C_S\abs z
   \qquad(k,z\in\R^2).
\end{equation*}
Tonelli's theorem and \eqref{eq:Euclidean-spatial-cutoff-moment} give
\begin{align*}
 &2\pi\int_{\R^2}\abs\xi\langle k\rangle^{-S}\,
       \bigl[(\widehat{\chi_L}*\widehat{\chi_L})*F\bigr](k)\,dk\\
 &=\iint_{\R^2\times\R^2}
       2\pi\abs{(r+z)_x}\langle r+z\rangle^{-S}
       \bigl(\widehat{\chi_L}*\widehat{\chi_L}\bigr)(z)
       F(r)\,dz\,dr\\
 &\leq2\pi\int_{\R^2}\abs{r_x}\langle r\rangle^{-S}F(r)\,dr
   +C_S\iint_{\R^2\times\R^2}
       \abs z
       \bigl(\widehat{\chi_L}*\widehat{\chi_L}\bigr)(z)
       F(r)\,dz\,dr\\
 &\leq2\pi\int_{\R^2}\abs{r_x}\langle r\rangle^{-S}F(r)\,dr
   +\frac{C_{\chi,S}}{L}\norm F_{L^1}.
\end{align*}
Finally,
\[
 \norm{\partial_x(A\chi_L^2)}_{\A^{-S}_{\R^2}}
 =2\pi A\int_{\R^2}\abs\xi\langle k\rangle^{-S}
       \bigl(\widehat{\chi_L}*\widehat{\chi_L}\bigr)(k)\,dk
 \leq\frac{C_\chi A}{L}.
\]
\end{proof}

\subsection{The localized amplitude and frequency gap}

For $M\in\R$, set
\[
    \norm{F}_{\A^M_{\R^2,t}}
    :=\int_{\R^2}\langle k\rangle^M
      \norm{\widehat F(\cdot,k)}_{C_t}\,dk.
\]
Fix the function $\chi$ from
Lemma~\ref{lem:Euclidean-positive-approximation} and put
$\chi_L(z)=\chi(z/L)$.
Let $P_{\leq\ell}^{x,y}$ have a real, even multiplier
$m_\ell\in C_c^\infty(\R^2)$ with
\begin{equation*}
    0\leq m_\ell\leq1,
    \qquad m_\ell=1\ \text{when }\abs k\leq\ell,
    \qquad \supp m_\ell\subset\{k\in\R^2:\abs k\leq2\ell\}.
\end{equation*}
Fix $d\in\{3,5\}$ and $\kappa\in\{-1,1\}$.  Suppose a smooth real pair
$(u,E)$ satisfies \eqref{eq:relaxed-general} with $r=2$
on $[0,1]\times\R^2$.  We assume that $u$ has compact Fourier support away
from $\{\xi=0\}$ and that
\[
    E\in C([0,1];\mathcal S(\R^2)).
\]
Fix $M>S+6$ and choose
\begin{equation}\label{eq:Euclidean-A-choice}
    A\geq C_M\left(1+\norm{E}_{C_tL^\infty}
                    +\norm{E}_{\A^M_{\R^2,t}}\right),
\end{equation}
where $C_{\mathrm{alg},M}$ denotes a weighted Wiener algebra product
constant and $C_M$ is chosen so that
$C_{\mathrm{alg},M}\norm{E}_{\A^M_{\R^2,t}}/A\leq1/2$.
Then
$\sum_{j\geq1}\binom{1/2}{j}(-E/A)^j$ converges absolutely in
$\A^M_{\R^2,t}$.  Define
\begin{equation}\label{eq:Euclidean-amplitude}
    a_L=\chi_L(A-E)^{1/2},
    \qquad a_{L,\ell}=P_{\leq\ell}^{x,y}a_L.
\end{equation}

Keep $0<\eps<1$ fixed.  For an admissible $\la$ so large that
$\la^\eps\geq8\ell$, set
\[
    \mu=\la^\eps,
    \qquad \nu=\la^{1+\eps}=\la\mu,
\]
take the profile $\rho_{\la,\eps}$ from Lemma~\ref{lem:slab}, extended
periodically in $x$, and choose $0\leq h\leq1$ with $h=1$ on
$\supp_tE$.  Set
\begin{equation}\label{eq:Euclidean-increment}
    w(t,x,y)=h(t)a_{L,\ell}(t,x,y)\rho_{\la,\eps}(x).
\end{equation}
Since $\supp\widehat{a_{L,\ell}}
\subset\{k\in\R^2:\abs k\leq2\ell\}$,
\begin{equation}\label{eq:Euclidean-cone}
    \supp\widehat w(t)\subset
    \left\{(\xi,\eta):
       \mu-2\ell\leq\abs\xi\leq2\nu+2\ell,
       \quad \abs\eta\leq2\ell
    \right\}.
\end{equation}
Thus
\begin{equation}\label{eq:Euclidean-cone-ratio}
    \abs\xi\geq\frac\mu2,
    \qquad \abs{\frac\eta\xi}\leq\frac{4\ell}{\mu}.
\end{equation}

Lemma~\ref{lem:slab} and \eqref{eq:Euclidean-periodic-decoupling} give
\begin{equation}\label{eq:Euclidean-Lp}
    \norm{w}_{C_tL^p(\R^2)}
    \lesssim
    \norm{h a_{L,\ell}}_{C_tW^{3,p}(\R^2)}
    \la^{(1-\eps)(1/2-1/p)},
    \qquad 1\leq p<2.
\end{equation}
Since $h=1$ on $\supp_tE$,
\begin{equation}\label{eq:Euclidean-cancellation}
 E+w^2=h^2\Bigl((1-\chi_L^2)E+A\chi_L^2
 +(a_{L,\ell}^2-a_L^2)+a_{L,\ell}^2(\rho_{\la,\eps}^2-1)\Bigr).
\end{equation}

\section{Whole-space absolute Fourier estimates}

\subsection{The localized square-root amplitude}

Spatial localization replaces the exact amplitude identity by an
approximate one.  The two terms linear in $E$ have combined absolute
coefficient one; the remaining terms are controlled by the first moment
of the cutoff and the higher powers of $E/A$.

\begin{proposition}
\label{prop:Euclidean-amplitude-exact-coefficient}
Fix $S\geq1$ and $M>S+6$.  Let $E$ be real with
$\norm{E}_{\A^M_{\R^2,t}}<\infty$, and let $0\leq h\leq1$ be a smooth
function of time satisfying $hE=E$.  Choose $A$ as in
\eqref{eq:Euclidean-A-choice}, with the constant there large enough that
\begin{equation}\label{eq:Euclidean-amplitude-smallness}
   A\geq2\norm E_{C_tL^\infty},
   \qquad
   \frac{C_{\mathrm{alg},M}\norm E_{\A^M_{\R^2,t}}}{A}\leq\frac12.
\end{equation}
For $L,\ell\geq1$, define $a_L$ and $a_{L,\ell}$ by
\eqref{eq:Euclidean-amplitude}.  Then
\begin{equation}\label{eq:Euclidean-amplitude-exact-coefficient}
   \operatorname{AF}^{\partial_x}_S(h a_{L,\ell})
   \leq
   \norm{\partial_xE}_{\A^{-S}_{\R^2,t}}
   +C_{E,S,M,\chi}
      \left(\frac{A}{L}+\frac1L+\frac1A\right).
\end{equation}
\end{proposition}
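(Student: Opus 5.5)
The plan mirrors the periodic argument for \eqref{eq:periodic-amplitude-AF}.

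\textbf{Decomposition.} Expand the square root by the binomial series, which converges absolutely in $\A^M_{\R^2,t}$ by \eqref{eq:Euclidean-amplitude-smallness}. Write
\[
 h a_{L,\ell}=B_0+B_1+B_{\geq2},\qquad B_0=A^{1/2}h\,P_{\leq\ell}^{x,y}\chi_L,\qquad B_1=-\tfrac{1}{2A^{1/2}}P_{\leq\ell}^{x,y}(\chi_LE),
\]
and let $B_{\geq2}=A^{1/2}\sum_{j\geq2}\binom{1/2}{j}(-1)^jA^{-j}P_{\leq\ell}^{x,y}(\chi_LE^j)$. Here $hE^j=E^j$ has been used, so $h$ survives only in $B_0$. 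Then $\operatorname{AF}^{\partial_x}_S(ha_{L,\ell})$ is bounded by the nine cross terms $\operatorname{AF}^{\partial_x}_S(B_i,B_j)$.

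\textbf{Crude bound for the cross terms.} For any $F,G$,
\[
 \operatorname{AF}^{\partial_x}_S(F,G)\leq C_S\norm F_{\A^0_{\R^2,t}}\norm G_{\A^0_{\R^2,t}},
\]
since $\abs{\xi_1+\xi_2}\langle k_1+k_2\rangle^{-S}\leq1$ for $S\geq1$.

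\textbf{Size of the pieces.} The multiplier $m_\ell$ lies in $[0,1]$ and $\widehat{\chi_L}\geq0$ has integral one, so multiplication by $\chi_L$ and the cutoff are contractive on $\A^0$. This gives
\[
 \norm{B_0}_{\A^0}\leq A^{1/2},\qquad \norm{B_1}_{\A^0}\leq\tfrac12A^{-1/2}\norm E_{\A^0},\qquad \norm{B_{\geq2}}_{\A^0}\lesssim\norm E_{\A^0}^2A^{-3/2}.
\]
The last bound uses the algebra property of $\A^0$ together with smallness. Consequently every term with two $B_1$ factors or at least one $B_{\geq2}$ factor is $O_E(A^{-1})$. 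This includes $(B_0,B_{\geq2})$, whose size is $A^{1/2}\cdot A^{-3/2}$.

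\textbf{The $B_0$--$B_0$ term.} This term carries no small factor, so it must be handled using the weight $\abs{\xi_1+\xi_2}$ rather than the crude bound. In Fourier space $\abs{\widehat{B_0}(t,k)}\leq A^{1/2}m_\ell(k)\widehat{\chi_L}(k)\leq A^{1/2}\widehat{\chi_L}(k)$. Hence
\[
 \operatorname{AF}^{\partial_x}_S(B_0)\leq A\cdot2\pi\int\abs\xi\langle k\rangle^{-S}(\widehat{\chi_L}*\widehat{\chi_L})(k)\,dk\leq\frac{C_\chi A}{L},
\]
exactly as in \eqref{eq:localized-constant-scaling}, i.e. Lemma~\ref{lem:Euclidean-positive-approximation} with $F=\delta$, or directly from the first-moment bound \eqref{eq:Euclidean-spatial-cutoff-moment}.

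\textbf{The terms linear in $E$.} This is the main step. We have
\[
 \operatorname{AF}^{\partial_x}_S(B_0,B_1)+\operatorname{AF}^{\partial_x}_S(B_1,B_0)\leq\int\!\!\int 2\pi\abs{\xi_1+\xi_2}\langle k_1+k_2\rangle^{-S}\,\widehat{\chi_L}(k_1)\,\sup_t\abs{\widehat{\chi_L E}(t,k_2)}\,dk_1dk_2.
\]
Here the two factors $\tfrac12$ combine to coefficient one, and $h\leq1$, $m_\ell\leq1$ are dropped. Next, $\sup_t\abs{\widehat{\chi_LE}(t,k_2)}\leq(\widehat{\chi_L}*F)(k_2)$ with $F(k)=\norm{\widehat E(\cdot,k)}_{C_t}$, which is nonnegative and lies in $L^1$ because $\norm E_{\A^0}<\infty$. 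The double integral is therefore at most
\[
 2\pi\int\abs\xi\langle k\rangle^{-S}\bigl[(\widehat{\chi_L}*\widehat{\chi_L})*F\bigr](k)\,dk.
\]
Lemma~\ref{lem:Euclidean-positive-approximation} bounds this by
\[
 \norm{\partial_xE}_{\A^{-S}_{\R^2,t}}+\frac{C_{\chi,S}}{L}\norm E_{\A^0},
\]
using that $2\pi\int\abs\xi\langle k\rangle^{-S}F\,dk$ is exactly the norm \eqref{eq:Euclidean-error-norm}.

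\textbf{Conclusion.} Summing the contributions gives \eqref{eq:Euclidean-amplitude-exact-coefficient}. The one subtlety I expect is that the convolution majorant must be taken before the supremum in time. This is harmless, since the bound holds pointwise in $t$ and $F$ already contains the supremum.
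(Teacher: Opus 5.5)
Your proposal is correct and follows the paper's proof essentially step for step: the binomial expansion of $h a_L$ with $hE^j=E^j$, the first-moment bound $C_\chi A/L$ for the degree-zero term, the two linear terms combined with total coefficient one and handled by Lemma~\ref{lem:Euclidean-positive-approximation} with $F(k)=\norm{\widehat E(\cdot,k)}_{C_t}$, and an $O(A^{-1})$ bound for everything of degree at least two in $E$. The only cosmetic difference is that the paper first removes the cutoff via $\operatorname{AF}^{\partial_x}_S(h a_{L,\ell})\leq\operatorname{AF}^{\partial_x}_S(h a_L)$, using $\abs{m_\ell}\leq1$, whereas you keep $P_{\leq\ell}^{x,y}$ inside each piece and drop $m_\ell\leq1$ at the level of the majorants; the two are equivalent.
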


\begin{proof}
The lower bound for $A$ gives $A-E\geq A/2$.  The binomial series
\begin{equation}\label{eq:Euclidean-sqrt-expansion}
   a_L
   =\sum_{j=0}^{\infty}\binom{1/2}{j}(-1)^j
       A^{1/2-j}\chi_LE^j
\end{equation}
converges absolutely in $\A^M_{\R^2,t}$, and
$h a_{L,\ell}\in C_tL^1(\R^2)\cap\A^0_{\R^2,t}$.  Since $S\geq1$,
\[
 \operatorname{AF}^{\partial_x}_S(h a_{L,\ell})
 \leq C_S\norm{h a_{L,\ell}}_{\A^0_{\R^2,t}}^2<\infty.
\]

To see that the convergence of the series is
uniform in $L\geq1$, $\widehat\chi\geq0$ and scaling give
\[
 \norm{\chi_L}_{\A^M_{\R^2}}
 =\int_{\R^2}\langle k\rangle^M L^2\widehat\chi(Lk)\,dk
 =\int_{\R^2}\langle z/L\rangle^M\widehat\chi(z)\,dz
 \leq\int_{\R^2}\langle z\rangle^M\widehat\chi(z)\,dz.
\]
Iterating the weighted Wiener algebra inequality therefore gives, for
every $j\geq1$,
\[
 \norm{\chi_LE^j}_{\A^M_{\R^2,t}}
 \leq C_{\chi,M}C_{\mathrm{alg},M}^j
       \norm E_{\A^M_{\R^2,t}}^j.
\]
The second condition in \eqref{eq:Euclidean-amplitude-smallness} gives
convergence in $\A^M_{\R^2,t}$ uniformly for $L\geq1$.
In particular, the series estimate and the contraction of
$P_{\leq\ell}^{x,y}$ give
\begin{equation}\label{eq:Euclidean-amplitude-Wiener-bounds}
 \norm{a_L}_{\A^M_{\R^2,t}}
 +\norm{a_{L,\ell}}_{\A^0_{\R^2,t}}
 +\norm{h a_{L,\ell}}_{\A^0_{\R^2,t}}
 \leq C_{E,M,\chi}A^{1/2}.
\end{equation}

Since $h a_{L,\ell}=P_{\leq\ell}^{x,y}(h a_L)$ and
$\abs{m_\ell}\leq1$,
\begin{equation}\label{eq:Euclidean-AF-cutoff-contraction}
   \operatorname{AF}^{\partial_x}_S(h a_{L,\ell})
      \leq\operatorname{AF}^{\partial_x}_S(h a_L).
\end{equation}

Because $hE=E$, one has $hE^j=E^j$ for every $j\geq1$.  Multiplying
\eqref{eq:Euclidean-sqrt-expansion} by $h$ therefore yields
\begin{equation}\label{eq:Euclidean-h-amplitude-series}
 h a_L=A^{1/2}h\chi_L-\frac{1}{2A^{1/2}}\chi_LE
 +\sum_{j\geq2}\binom{1/2}{j}(-1)^j
        A^{1/2-j}\chi_LE^j.
\end{equation}

The term of degree zero in $E$ satisfies
\begin{align*}
 2\pi A\iint_{\R^2\times\R^2}
   \abs{\xi_1+\xi_2}\langle k_1+k_2\rangle^{-S}
   \widehat{\chi_L}(k_1)\widehat{\chi_L}(k_2)
   \,dk_1\,dk_2
 &=2\pi A\int_{\R^2}\abs\xi\langle k\rangle^{-S}
      \bigl(\widehat{\chi_L}*\widehat{\chi_L}\bigr)(k)\,dk\\
 &\leq\frac{C_\chi A}{L}.
\end{align*}

The two terms linear in $E$ have combined absolute coefficient one.
By Minkowski's integral inequality and
$\widehat{\chi_L}\geq0$,
\begin{equation*}
   \norm{\widehat{\chi_LE}(\cdot,k)}_{C_t}
      \leq\int_{\R^2}\widehat{\chi_L}(z)
         \norm{\widehat E(\cdot,k-z)}_{C_t}\,dz.
\end{equation*}
The two cross terms have identical nonnegative majorants.  Set
$F(k)=\norm{\widehat E(\cdot,k)}_{C_t}$.  Tonelli's theorem and a change of
variables, followed by
Lemma~\ref{lem:Euclidean-positive-approximation}, bound the linear
contribution by
\[
 2\pi\int_{\R^2}\abs\xi\langle k\rangle^{-S}
   \bigl[(\widehat{\chi_L}*\widehat{\chi_L})*F\bigr](k)\,dk
 \leq \norm{\partial_xE}_{\A^{-S}_{\R^2,t}}
   +\frac{C_{\chi,S}}{L}\norm E_{\A^0_{\R^2,t}}.
\]
Set $R=\norm E_{\A^0_{\R^2,t}}$.  The Wiener algebra inequality gives
$\norm{E^j}_{\A^0_{\R^2,t}}\leq R^j$.  Since
$\widehat{\chi_L}\geq0$ and $\int\widehat{\chi_L}=1$, Minkowski's inequality
and Tonelli's theorem imply, for every $j\geq1$,
\begin{align*}
 \norm{\chi_LE^j}_{\A^0_{\R^2,t}}
 &\leq\iint_{\R^2\times\R^2}\widehat{\chi_L}(z)
    \norm{\widehat{E^j}(\cdot,k-z)}_{C_t}\,dz\,dk\\
 &=\norm{E^j}_{\A^0_{\R^2,t}}
 \leq R^j.
\end{align*}
For $S\geq1$, the multiplier
$2\pi\abs\xi\langle k\rangle^{-S}$ is bounded.  Hence the terms of total
degree at least two in $E$ are bounded by
\[
 C_SA\sum_{\substack{i,j\geq0\\i+j\geq2}}
   \abs{\binom{1/2}{i}}\abs{\binom{1/2}{j}}
   \left(\frac RA\right)^{i+j}
 \leq C_SA\left(\frac RA\right)^2
 \leq\frac{C_{E,S}}{A}.
\]

Combining the constant, linear, and higher-order terms in the expansion
with \eqref{eq:Euclidean-AF-cutoff-contraction} proves
\eqref{eq:Euclidean-amplitude-exact-coefficient}.
\end{proof}

\subsection{\texorpdfstring{The term $w^2$}{The term w squared}}

Write the periodic profile as
\begin{equation}\label{eq:Euclidean-localized-perturbation-Fourier}
    \rho_{\la,\eps}(x)
    =\sum_{n\in\mu\Z\setminus\{0\}}c_ne^{2\pi inx},
    \qquad
    c_{-n}=\overline{c_n},
    \qquad \sum_n\abs{c_n}^2=1,
    \qquad c_n=0\quad\text{for }\abs n>2\nu.
\end{equation}

\begin{proposition}

Let the hypotheses of
Proposition~\ref{prop:Euclidean-amplitude-exact-coefficient} hold with
$S>2$, let
$\rho_{\la,\eps}$ be given by Lemma~\ref{lem:slab}, and suppose
$\mu\geq8\ell$.  For
\[
   w=h a_{L,\ell}\rho_{\la,\eps},
\]
\begin{equation}\label{eq:Euclidean-pure-exact-coefficient}
 \operatorname{AF}^{\partial_x}_S(w)
 \leq\norm{\partial_xE}_{\A^{-S}_{\R^2,t}}
   +C_{E,S,M,\chi}\left(
       \frac{A}{L}+\frac1L+\frac1A+A\mu^{1-S}\right).
\end{equation}
\end{proposition}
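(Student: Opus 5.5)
The plan is to imitate the periodic argument of Proposition~\ref{prop:periodic-absolute-summability}, with $B=ha_{L,\ell}$ replacing the periodic amplitude and Proposition~\ref{prop:Euclidean-amplitude-exact-coefficient} replacing \eqref{eq:periodic-amplitude-AF}. Since $\rho_{\la,\eps}$ is a trigonometric polynomial in $x$ alone, its Fourier transform on $\R^2$ is the atomic measure $\sum_n c_n\delta_{(n,0)}$. Hence
\[
 \widehat w(t,k)=\sum_{n}c_n\widehat B(t,k-(n,0)).
\]
Because $\supp\widehat B\subset\{\abs k\leq2\ell\}$ and distinct frequencies $n\in\supp c$ differ by at least $\mu\geq8\ell>4\ell$, the translates are pairwise disjoint. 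So for each $k$ at most one summand is nonzero, and $\norm{\widehat w(\cdot,k)}_{C_t}=\sum_n\abs{c_n}\norm{\widehat B(\cdot,k-(n,0))}_{C_t}$. Substituting this into \eqref{eq:Euclidean-Q} with $r=2$ gives
\[
 \operatorname{AF}^{\partial_x}_S(w)
 =\sum_{n_1,n_2}\abs{c_{n_1}c_{n_2}}\,2\pi\iint
 \abs{n_1+n_2+\xi_1+\xi_2}\,\langle(n_1+n_2,0)+\ell_1+\ell_2\rangle^{-S}
 \norm{\widehat B(\cdot,\ell_1)\widehat B(\cdot,\ell_2)}_{C_t}\,d\ell_1\,d\ell_2 .
\]

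Next I split the sum by $s=n_1+n_2$. For $s=0$, the inner integral is exactly $\operatorname{AF}^{\partial_x}_S(B)$. Its coefficient is $\sum_n\abs{c_nc_{-n}}=\sum_n\abs{c_n}^2=1$, using $c_{-n}=\overline{c_n}$. Proposition~\ref{prop:Euclidean-amplitude-exact-coefficient} then supplies $\norm{\partial_xE}_{\A^{-S}_{\R^2,t}}+C(A/L+1/L+1/A)$.

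For $s\neq0$, we have $s\in\mu\Z$, and on the amplitude support $\abs{\xi_1+\xi_2}\leq4\ell\leq\abs s/2$. Therefore
\[
 \abs{s+\xi_1+\xi_2}\,\langle\cdot\rangle^{-S}\lesssim\abs s^{1-S}.
\]
Cauchy--Schwarz gives $\sum_{n}\abs{c_nc_{s-n}}\leq1$ for each $s$. The contribution is then at most
\[
 C\norm{B}_{\A^0_{\R^2,t}}^2\sum_{s\in\mu\Z\setminus\{0\}}\abs s^{1-S}
 =C\mu^{1-S}\norm B_{\A^0_{\R^2,t}}^2\sum_{p\neq0}\abs p^{1-S}.
\]
The series converges because $S>2$. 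Finally, $\norm{B}_{\A^0_{\R^2,t}}\leq C_{E,M,\chi}A^{1/2}$ by \eqref{eq:Euclidean-amplitude-Wiener-bounds}, so this term is $\lesssim A\mu^{1-S}$.

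The main point requiring care is the disjointness and bookkeeping step. Taking $\norm{\cdot}_{C_t}$ of $\widehat w(t,k)$ and turning the square of a sum into a sum of products is legitimate only because of the support separation $\mu\geq8\ell$. Without it, cross terms would spoil the exact coefficient one. I would also check that the weight $\abs{\xi}$, rather than $\langle k\rangle$, costs exactly one power of $\abs s$. This is why the error is $\mu^{1-S}$ rather than $\mu^{-S}$, and why $S>2$ is needed for summability.
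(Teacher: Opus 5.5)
Your proposal is correct and follows the paper's proof essentially step for step. Disjointness of the translates (from $\mu\geq8\ell$) gives the exact expansion. The $n_1+n_2=0$ terms reproduce $\operatorname{AF}^{\partial_x}_S(ha_{L,\ell})$ with coefficient $\sum_n\abs{c_n}^2=1$ and are handled by Proposition~\ref{prop:Euclidean-amplitude-exact-coefficient}. The $n_1+n_2=s\neq0$ terms are controlled by Cauchy--Schwarz, the weight bound $\lesssim\abs s^{1-S}$, summation over $\mu\Z\setminus\{0\}$ using $S>2$, and $\norm{ha_{L,\ell}}_{\A^0_{\R^2,t}}\lesssim A^{1/2}$. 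One small point: the bilinear quantity actually needs the product identity $\norm{\widehat w(\cdot,k_1)\widehat w(\cdot,k_2)}_{C_t}=\abs{c_{n_1}c_{n_2}}\norm{\widehat B(\cdot,k_1-(n_1,0))\widehat B(\cdot,k_2-(n_2,0))}_{C_t}$, not the single-factor identity you state; it follows from the same disjointness, and your displayed expansion is the correct one.
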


\begin{proof}
The Fourier support of $h a_{L,\ell}$ is contained in
$\{k\in\R^2:\abs k\leq2\ell\}$, while
distinct frequencies $n$ in \eqref{eq:Euclidean-localized-perturbation-Fourier} differ by
at least $\mu\geq8\ell$.  Hence the translates of
$\supp\widehat{h a_{L,\ell}}$ by these frequencies are pairwise disjoint,
and every point of $\supp\widehat w$ belongs to a unique translate.
Consequently,
\begin{align*}
 \operatorname{AF}^{\partial_x}_S(w)
 ={}&2\pi\sum_{n_1,n_2}\abs{c_{n_1}c_{n_2}}
 \iint_{\R^2\times\R^2}
   \abs{n_1+n_2+\xi_1+\xi_2}\\
 &\quad\times
   \left\langle(n_1+n_2,0)+k_1+k_2\right\rangle^{-S}
   \norm{\widehat{h a_{L,\ell}}(\cdot,k_1)
          \widehat{h a_{L,\ell}}(\cdot,k_2)}_{C_t}
   \,dk_1\,dk_2.
\end{align*}

When $n_1+n_2=0$, the inner integral is
$\operatorname{AF}^{\partial_x}_S(h a_{L,\ell})$, with coefficient one by
\eqref{eq:Euclidean-localized-perturbation-Fourier}.
Proposition~\ref{prop:Euclidean-amplitude-exact-coefficient} therefore bounds
the part with $n_1+n_2=0$ by
\[
 \norm{\partial_xE}_{\A^{-S}_{\R^2,t}}
 +C_{E,S,M,\chi}\left(\frac AL+\frac1L+\frac1A\right).
\]

For the remaining frequency pairs, Cauchy--Schwarz gives
\[
 \sum_n\abs{c_nc_{s-n}}
 \leq\left(\sum_n\abs{c_n}^2\right)^{1/2}
      \left(\sum_n\abs{c_{s-n}}^2\right)^{1/2}=1.
\]
Consequently, if $S>2$,
\begin{equation}\label{eq:Euclidean-frequency-sum-bound}
   \sum_{s\in\mu\Z\setminus\{0\}}
      \left(\sum_n\abs{c_nc_{s-n}}\right)\abs{s}^{1-S}
   \leq\mu^{1-S}\sum_{j\in\Z\setminus\{0\}}\abs j^{1-S}
   \leq C_S\mu^{1-S}.
\end{equation}
For $s\neq0$ and $\abs{k_1},\abs{k_2}\leq2\ell$, one has
\[
   \abs{k_1+k_2}\leq4\ell\leq\frac\mu2\leq\frac{\abs s}{2}.
\]
It follows that
\[
 \abs{s+\xi_1+\xi_2}
 \left\langle(s,0)+k_1+k_2\right\rangle^{-S}
 \leq C_S\abs{s}^{1-S}.
\]
Inserting this bound in the exact Fourier expansion, grouping by
$s=n_1+n_2$, and applying \eqref{eq:Euclidean-frequency-sum-bound}, the
part with $n_1+n_2\neq0$ is at most
\[
 C_S\mu^{1-S}
    \norm{h a_{L,\ell}}_{\A^0_{\R^2,t}}^2.
\]
Finally, \eqref{eq:Euclidean-amplitude-Wiener-bounds} gives
$\norm{h a_{L,\ell}}_{\A^0_{\R^2,t}}\leq C_EA^{1/2}$.
Adding the contributions with $n_1+n_2=0$ and $n_1+n_2\neq0$ gives
\eqref{eq:Euclidean-pure-exact-coefficient}.
\end{proof}

\subsection{The mixed term}

In $uw$, the frequency $n\in\supp\widehat\rho_{\la,\eps}$ cannot cancel.
Hence the output $x$-frequency is comparable to $n$, and
$2\pi\abs\xi\langle k\rangle^{-S}\lesssim_S\abs n^{1-S}$.

\begin{proposition}
\label{prop:Euclidean-mixed-absolute}
Fix $0<\eps<1$ and $S>2$.  Let $u\in C_tL^1(\R^2)$ satisfy
\[
   \norm{u}_{\A^0_{\R^2,t}}<\infty,
   \qquad
   \operatorname{AF}^{\partial_x}_S(u)<\infty,
   \qquad
   \widehat u(t,\xi,\eta)=0\quad\text{whenever }\abs\xi>K
\]
for every $t\in[0,1]$ and some $K\geq1$.  Under the hypotheses of
Proposition~\ref{prop:Euclidean-amplitude-exact-coefficient}, let
\[
   w=h a_{L,\ell}\rho_{\la,\eps},
\]
where $\rho_{\la,\eps}$ is given by Lemma~\ref{lem:slab}, and suppose
\begin{equation}\label{eq:Euclidean-mixed-separation}
   \mu\geq8\ell,
   \qquad
   \mu>8K.
\end{equation}
Then the projections of $\supp\widehat u$ and $\supp\widehat w$ onto the
$\xi$-axis are disjoint, and
\begin{equation}\label{eq:Euclidean-mixed-absolute}
\begin{split}
 \operatorname{AF}^{\partial_x}_S(u,w)
 &\leq C_{S,\eps}
   \norm{u}_{\A^0_{\R^2,t}}
   \norm{h a_{L,\ell}}_{\A^0_{\R^2,t}}
   \la^{-\frac12(1-\eps)}\mu^{1-S}\\
 &\leq C_{u,E,S,M,\eps,\chi}A^{1/2}
   \la^{-\frac12(1-\eps)-\eps(S-1)}.
\end{split}
\end{equation}
Consequently,
\begin{equation}\label{eq:Euclidean-combined-absolute}
\begin{split}
 \operatorname{AF}^{\partial_x}_S(u+w)
 \leq{}&\operatorname{AF}^{\partial_x}_S(u)
   +\norm{\partial_xE}_{\A^{-S}_{\R^2,t}}\\
 &+C_{E,S,M,\chi}\left(
       \frac A L+\frac1L+\frac1A+A\mu^{1-S}\right)\\
 &+C_{u,E,S,M,\eps,\chi}A^{1/2}
   \la^{-\frac12(1-\eps)-\eps(S-1)}.
\end{split}
\end{equation}
\end{proposition}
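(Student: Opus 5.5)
We follow the periodic mixed-term argument of Proposition~\ref{prop:periodic-absolute-summability} and expand $w$ in profile frequencies. Write $B=ha_{L,\ell}$ and use the Fourier series \eqref{eq:Euclidean-localized-perturbation-Fourier}. Then
\[
 \widehat w(t,k)=\sum_{n}c_n\widehat B(t,k-(n,0)),
\]
and $\supp\widehat B\subset\{\abs k\leq2\ell\}$. For $n\neq0$ we have $\abs n\geq\mu$, so the $\xi$-projection of $\supp\widehat w$ lies in $\abs\xi\geq\mu-2\ell\geq\tfrac34\mu$. The $\xi$-projection of $\supp\widehat u$ lies in $\abs\xi\leq K<\mu/8$. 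This gives the disjointness claim.

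For the bound on $\operatorname{AF}^{\partial_x}_S(u,w)$, we insert the expansion into \eqref{eq:Euclidean-Q} with $r=2$ and use the triangle inequality in $n$. This bounds the quantity by
\[
2\pi\sum_{n\neq0}\abs{c_n}\iint\abs{\xi_1+n+\xi_2}\langle k_1+(n,0)+k_2\rangle^{-S}\norm{\widehat u(\cdot,k_1)\widehat B(\cdot,k_2)}_{C_t}\,dk_1\,dk_2 .
\]
On the support we have $\abs{\xi_1}\leq K\leq\abs n/8$ and $\abs{k_2}\leq2\ell\leq\abs n/4$. The transverse frequency of $k_1$ is not controlled, but we do not need it. Since $\abs{\xi_1+n+\xi_2}\geq\abs n/2$, we get $\langle k_1+(n,0)+k_2\rangle\geq\abs n/2$. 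Also $\abs{\xi_1+n+\xi_2}\leq2\abs n$. Hence the weight is at most $C_S\abs n^{1-S}$. Integrating gives the bound $C_S\norm u_{\A^0_{\R^2,t}}\norm B_{\A^0_{\R^2,t}}\sum_{n\neq0}\abs{c_n}\abs n^{1-S}$.

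It remains to estimate the sum. By \eqref{eq:slab-coefficient-bound}, or by $\abs{c_n}\leq\norm{\rho_{\la,\eps}}_{L^1}\lesssim\la^{-(1-\eps)/2}$ from \eqref{eq:slabLp}, together with $n\in\mu\Z\setminus\{0\}$ and $S>2$, we get
\[
\sum\abs{c_n}\abs n^{1-S}\leq C_{S,\eps}\la^{-(1-\eps)/2}\mu^{1-S}.
\]
This gives the first line of \eqref{eq:Euclidean-mixed-absolute}. The second line follows from \eqref{eq:Euclidean-amplitude-Wiener-bounds}, which gives $\norm B_{\A^0}\leq CA^{1/2}$, and from $\mu=\la^\eps$.

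For \eqref{eq:Euclidean-combined-absolute}, we expand $\operatorname{AF}^{\partial_x}_S(u+w)\leq\operatorname{AF}^{\partial_x}_S(u)+2\operatorname{AF}^{\partial_x}_S(u,w)+\operatorname{AF}^{\partial_x}_S(w)$. This is the triangle inequality for $\abs{\widehat{(u+w)}(k_1)\widehat{(u+w)}(k_2)}$, together with the symmetry of the two cross terms under $k_1\leftrightarrow k_2$. We then apply \eqref{eq:Euclidean-pure-exact-coefficient} and the mixed bound just proved, absorbing the factor $2$ into the constant.

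The only delicate point is the weight bound. Because $u$ has no transverse frequency restriction, we must lower-bound $\langle\cdot\rangle$ by the $x$-component alone. The $x$-gap supplied by \eqref{eq:Euclidean-mixed-separation} makes this possible.
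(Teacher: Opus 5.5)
Your proposal is correct and follows the paper's proof essentially step for step. It uses the same expansion of $\widehat w$ over the profile frequencies $n\in\mu\Z\setminus\{0\}$, the same lower bound on the output frequency using only its $x$-component (giving the weight $C_S|n|^{1-S}$), the same coefficient bound $|c_n|\lesssim\la^{-(1-\eps)/2}$ summed over $\mu\Z\setminus\{0\}$, and the same quadratic expansion combined with the pure-term estimate. The only difference is that the paper states the two-sided bound with the constants $\tfrac58|n|$ and $\tfrac{11}{8}|n|$ instead of your $|n|/2$ and $2|n|$, which is immaterial.
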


\begin{proof}
The Fourier expansion of the perturbation is
\begin{equation}\label{eq:Euclidean-mixed-w-Fourier}
 \widehat w(t,k)
 =\sum_{n\in\mu\Z\setminus\{0\}}c_n
    \widehat{h a_{L,\ell}}\bigl(t,k-(n,0)\bigr).
\end{equation}
Let $k=(\xi,\eta)$ belong to the Fourier support of $u$, let
$z=(z_x,z_y)$ belong to the Fourier support of $h a_{L,\ell}$, and let
$n$ be a nonzero Fourier frequency in
\eqref{eq:Euclidean-mixed-w-Fourier}.  Then
\[
   \abs\xi\leq K,
   \qquad
   \abs{z_x}\leq2\ell,
   \qquad
   \abs n\geq\mu.
\]
The assumption $\norm u_{\A^0_{\R^2,t}}<\infty$ and Fourier inversion give
$u\in C_tL^\infty$; hence $uw\in C_tL^1$ and
$\widehat{uw}=\widehat u*\widehat w$.
By \eqref{eq:Euclidean-mixed-separation},
\begin{equation}\label{eq:Euclidean-mixed-frequency-gap}
 \frac58\abs n\leq\abs{\xi+n+z_x}\leq\frac{11}{8}\abs n.
\end{equation}
The same separation makes the projections of $\supp\widehat u$ and
$\supp\widehat w$ onto the $\xi$-axis disjoint.  Moreover,
\eqref{eq:Euclidean-mixed-frequency-gap} gives
\begin{equation}\label{eq:Euclidean-mixed-weight}
 2\pi\abs{\xi+n+z_x}
   \left\langle k+(n,0)+z\right\rangle^{-S}
 \leq C_S\abs n^{1-S}.
\end{equation}

Insert the finite expansion \eqref{eq:Euclidean-mixed-w-Fourier} into
\eqref{eq:Euclidean-Q}, apply the triangle inequality inside
the $C_t$ norm, and change variables in the second frequency integral.
Tonelli's theorem and \eqref{eq:Euclidean-mixed-weight} give
\begin{align*}
 \operatorname{AF}^{\partial_x}_S(u,w)
 &\leq2\pi\sum_{n\in\mu\Z\setminus\{0\}}\abs{c_n}
   \iint_{\R^2\times\R^2}
   \frac{\abs{\xi+n+z_x}}
        {\left\langle k+(n,0)+z\right\rangle^S}
   \norm{\widehat u(\cdot,k)
     \widehat{h a_{L,\ell}}(\cdot,z)}_{C_t}\,dk\,dz\\
 &\leq C_S\sum_{n\in\mu\Z\setminus\{0\}}
      \abs{c_n}\abs n^{1-S}
   \iint_{\R^2\times\R^2}
     \norm{\widehat u(\cdot,k)}_{C_t}
     \norm{\widehat{h a_{L,\ell}}(\cdot,z)}_{C_t}
       \,dk\,dz\\
 &=C_S\norm{u}_{\A^0_{\R^2,t}}
      \norm{h a_{L,\ell}}_{\A^0_{\R^2,t}}
      \sum_{n\in\mu\Z\setminus\{0\}}
         \abs{c_n}\abs n^{1-S}.
\end{align*}
By \eqref{eq:slab-coefficient-bound}, every Fourier coefficient satisfies
\[
   \abs{c_n}\leq C_\eps\la^{-\frac12(1-\eps)}.
\]
Extending the finite sum to the whole nonzero lattice, we obtain
\[
 \sum_{n\in\mu\Z\setminus\{0\}}\abs{c_n}\abs n^{1-S}
 \leq C_\eps\la^{-\frac12(1-\eps)}
   \sum_{j\in\Z\setminus\{0\}}\abs{\mu j}^{1-S}
 \leq C_{S,\eps}\la^{-\frac12(1-\eps)}\mu^{1-S}.
\]
By \eqref{eq:Euclidean-amplitude-Wiener-bounds},
$\norm{h a_{L,\ell}}_{\A^0_{\R^2,t}}\leq C_{E,M,\chi}A^{1/2}$.
Since $\mu=\la^\eps$, these estimates prove
\eqref{eq:Euclidean-mixed-absolute}.

The quadratic expansion of $\operatorname{AF}^{\partial_x}_S(u+w)$, together
with \eqref{eq:Euclidean-mixed-absolute} and
\eqref{eq:Euclidean-pure-exact-coefficient}, gives
\eqref{eq:Euclidean-combined-absolute}.
\end{proof}

\section{Whole-space quadratic iteration}

\begin{proposition}\label{prop:Euclidean-onestep}
Fix $d\in\{3,5\}$, $\kappa\in\{-1,1\}$,
$0<\eps<1$, $0<\beta<\frac12(1-\eps)$, $S>d+1$, and $M>S+6$.
Let $u,E\in C^\infty([0,1];\mathcal S(\R^2))$ be a real pair satisfying
\eqref{eq:relaxed-general} with $r=2$.  Assume that $u$ and $E$ have compact spatial
Fourier support and that
\[
  \widehat u(t,\xi,\eta)=0\quad\text{for }\abs\xi<c_0
\]
for some $c_0>0$, and that
$\supp_tu\cup\supp_tE\subset I$ for a closed interval $I\Subset(0,1)$.
Let
\[
 K=\max\left(\{1,c_0\}\cup
 \{\abs\xi:(\xi,\eta)\in\supp\widehat u\}\right).
\]
Given finitely many exponents $1\leq p_j<2$ and numbers
$\delta_w,\delta_+>0$, one can choose parameters in the order
\[
 A\ \longrightarrow\ L\ \longrightarrow\ \ell\ \longrightarrow\ \la
\]
and a cutoff $h$ satisfying \eqref{eq:time-cutoff-lemma} with
$\tau=\la^{-\beta}$, for which $w=h a_{L,\ell}\rho_{\la,\eps}$ and
$u^+=u+w$, with $E^+$ defined by
\eqref{eq:Euclidean-update-general} for $r=2$, satisfy
\begin{align}
 \norm w_{C_tL^{p_j}(\R^2)}&<\delta_w,
 \label{eq:Euclidean-onestep-Lp}\\
 \norm{\partial_xE^+}_{\A^{-S}_{\R^2,t}}&<\delta_+,
 \notag\\
 \operatorname{AF}^{\partial_x}_S(u^+)
 &\leq\operatorname{AF}^{\partial_x}_S(u)
       +\norm{\partial_xE}_{\A^{-S}_{\R^2,t}}+\delta_w.
 \label{eq:Euclidean-onestep-absolute}
\end{align}
The pair $(u^+,E^+)$ is smooth and real, is Schwartz in space, has compact
spatial Fourier support, satisfies \eqref{eq:relaxed-general} with
$r=2$, and is
supported in the open $\la^{-\beta}$-neighborhood of $I$.  Furthermore,
\begin{equation}\label{eq:Euclidean-onestep-support}
 \supp\widehat w\subset
 \left\{(\xi,\eta):
   \mu-2\ell\leq\abs\xi\leq2\nu+2\ell,
   \ \abs\eta\leq2\ell\right\},
\end{equation}
where
\begin{equation}\label{eq:Euclidean-onestep-separation}
 \mu\geq8\ell,
 \qquad \mu>8K.
\end{equation}
Moreover,
\[
 \widehat{u^+}(t,\xi,\eta)=0\qquad(\abs\xi<c_0),
 \qquad
 \supp\widehat u\cap\supp\widehat w=\varnothing.
\]

Once $A,L,$ and $\ell$ are fixed, every sufficiently large admissible
$\la$ works.
\end{proposition}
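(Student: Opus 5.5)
Fix $A$ first via \eqref{eq:Euclidean-A-choice} and \eqref{eq:Euclidean-amplitude-smallness}, large enough that the $C/A$ terms in \eqref{eq:Euclidean-combined-absolute} and the oscillation error are below a fraction of $\min(\delta_w,\delta_+)$. Then choose $L$ so that $A/L$ and $1/L$ are small. The $1/L$ smallness enters both the absolute bound and the localization errors $\norm{\partial_x(A\chi_L^2)}_{\A^{-S}_{\R^2}}\leq C_\chi A/L$ from \eqref{eq:localized-constant-scaling} and $\partial_x((1-\chi_L^2)E)$. For the latter, $E$ is Schwartz, so $(1-\chi_L^2)E\to0$ in $\A^0_{\R^2,t}$ as $L\to\infty$, and $2\pi\abs\xi\langle k\rangle^{-S}$ is bounded. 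Next choose $\ell$ so that $\norm{a_{L,\ell}^2-a_L^2}_{\A^0_{\R^2,t}}$ is small; this works because $a_L\in\A^M_{\R^2,t}$ and $\norm{a_L-a_{L,\ell}}_{\A^0}\leq\ell^{-M}\norm{a_L}_{\A^M}$. Finally take $\la$ admissible with $\mu\geq8\ell$, $\mu>8K$, and $\la^{-\beta}<\operatorname{dist}(I,\{0,1\})$, together with $h$ from \eqref{eq:time-cutoff-lemma} with $\tau=\la^{-\beta}$. Then $hE=E$, $hu=u$, and the support claims follow: \eqref{eq:Euclidean-cone} gives \eqref{eq:Euclidean-onestep-support}, and $\mu-2\ell\geq\mu/2>4K\geq4c_0$ gives both the gap for $u^+$ and disjointness from $\supp\widehat u$. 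The pair $(u^+,E^+)$ satisfies the relaxed equation by the algebraic identity behind \eqref{eq:Euclidean-update-general}. Note that $\partial_x^{-1}w$ and $\partial_x^{-2}w$ are Schwartz with compact Fourier support because $\widehat w$ vanishes near $\xi=0$.

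The $L^p$ bound \eqref{eq:Euclidean-onestep-Lp} follows from \eqref{eq:Euclidean-Lp}. The $W^{3,p}$ norm of $ha_{L,\ell}$ is fixed once $A,L,\ell$ are fixed, and $\la^{(1-\eps)(1/2-1/p_j)}\to0$. The absolute bound \eqref{eq:Euclidean-onestep-absolute} is \eqref{eq:Euclidean-combined-absolute}: the $A\mu^{1-S}$ term and the mixed term vanish as $\la\to\infty$ with $A,L,\ell$ fixed, because $S>2$.

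For $\partial_xE^+$, split $E^+=E_O+E_N+E_D+E_T$ as in Section~3, without the projection.

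The oscillation error comes from \eqref{eq:Euclidean-cancellation}, so $\partial_x E_O$ has four pieces:
\begin{itemize}
\item $(1-\chi_L^2)E$ and $A\chi_L^2$, handled by the choice of $L$;
\item $a_{L,\ell}^2-a_L^2$, handled by the choice of $\ell$;
\item $a_{L,\ell}^2(\rho^2-1)$. Here $\rho^2-1$ has zero mean and frequencies in $\mu\Z$, while $a_{L,\ell}^2$ has frequencies in $\abs k\leq4\ell\leq\mu/2$. Hence its Fourier transform lives in $\abs\xi\geq\mu/2$, where $\abs\xi\langle k\rangle^{-S}\lesssim\mu^{1-S}$. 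Its $\A^0$ norm is bounded by $\norm{a_{L,\ell}}_{\A^0}^2\sum_{n}\abs{(c*c)_n}\leq C_EA$ by Cauchy--Schwarz. This gives $O(A\mu^{1-S})$.
\end{itemize}

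For the Nash term $\partial_x(uw)$, I would follow the mixed estimate, with $u$ replacing one factor. This gives a bound $C\norm u_{\A^0}\norm{ha_{L,\ell}}_{\A^0}\la^{-(1-\eps)/2}\mu^{1-S}$.

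For dispersion and time, I would bound $\int\langle k\rangle^{-S}(\abs\xi^{d}+\eta^2/\abs\xi)\norm{\widehat w(\cdot,k)}_{C_t}\,dk$. By \eqref{eq:Euclidean-cone-ratio}, the symbol is $\lesssim\langle k\rangle^{d}$, and $\norm{\widehat w}_{L^1_kC_t}\leq\sum\abs{c_n}\norm{ha_{L,\ell}}_{\A^0}\lesssim A^{1/2}\nu\la^{-(1-\eps)/2}\cdot\mu^{-1}$. Rather than this crude count, restrict to $\abs\xi\geq\mu/2$ and use $\langle k\rangle^{d-S}\lesssim\abs n^{d-S}$ summed against $\abs{c_n}\lesssim\la^{-(1-\eps)/2}$ over $n\in\mu\Z$. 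Since $S>d+1$, this yields $\la^{-(1-\eps)/2}\mu^{d-S}$.

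The temporal term has multiplier $\abs\xi\cdot\abs\xi^{-1}=1$. Its bound is $(\norm{h'}_\infty+1)\la^{-(1-\eps)/2}\mu^{-S}$ times amplitude norms, where $\norm{h'}_\infty\lesssim\la^\beta$. This is small because $\beta<(1-\eps)/2$.

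The main obstacle I expect is the bookkeeping for the oscillation term. Only $\abs\xi$ (not $\abs\xi^{-1}$) appears in it, so the gap must supply $\mu^{1-S}$ despite the $A$-sized coefficient. This requires ordering $A$ before $\la$, and making sure the $\ell$-truncation error is measured in $\A^0$ uniformly in $\la$.
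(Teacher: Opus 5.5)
Your parameter order $A\to L\to\ell\to\la$, the splitting of $\partial_xE^+$ into oscillation, Nash, dispersion and temporal parts, and your use of \eqref{eq:Euclidean-Lp}, \eqref{eq:Euclidean-combined-absolute} and the mixed estimate all follow the paper. Two of your choices differ from the paper but are harmless variants:
\begin{itemize}
\item you treat $(1-\chi_L^2)E$ by a qualitative approximate-identity argument, where the paper uses a first-moment rate;
\item you bound the dispersion and temporal terms by summing per-translate decay against $\abs{c_n}\lesssim\la^{-(1-\eps)/2}$ and $\norm{ha_{L,\ell}}_{\A^0_{\R^2,t}}$, where the paper uses $\abs{\widehat w}\leq\norm w_{L^1}$ on the disks plus decoupling.
\end{itemize}
The step that fails is the one you call the crux: the term $a_{L,\ell}^2(\rho_{\la,\eps}^2-1)$.

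You bound its $\A^0_{\R^2,t}$ norm by $\norm{a_{L,\ell}}_{\A^0_{\R^2,t}}^2\sum_s\abs{(c*c)_s}$ and claim the sum is $O(1)$ by Cauchy--Schwarz. Cauchy--Schwarz gives only the pointwise bound $\abs{\widehat{\rho_{\la,\eps}^2}(s)}\leq\sum_n\abs{c_nc_{s-n}}\leq1$ for each $s$.

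The sum itself is large. Since $\widehat\rho_{\la,\eps}\geq0$, one has $\sum_s\widehat{\rho_{\la,\eps}^2}(s)=\rho_{\la,\eps}(0)^2=\norm{\rho_{\la,\eps}}_{L^\infty}^2$. The inequality $1=\int\rho_{\la,\eps}^2\leq\norm{\rho_{\la,\eps}}_{L^\infty}\norm{\rho_{\la,\eps}}_{L^1}$ together with \eqref{eq:slabLp} then gives $\norm{\rho_{\la,\eps}}_{L^\infty}^2\gtrsim\la^{1-\eps}$.

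The translates of $\supp\widehat{a_{L,\ell}^2}$ by $s\in\mu\Z$ are disjoint. Hence the $\A^0_{\R^2,t}$ norm of $a_{L,\ell}^2(\rho_{\la,\eps}^2-1)$ is exactly $\bigl(\rho_{\la,\eps}(0)^2-1\bigr)\norm{a_{L,\ell}^2}_{\A^0_{\R^2,t}}$. Bounding the weight by its supremum $\mu^{1-S}$ on $\{\abs\xi\geq\mu/2\}$ therefore gives only a bound of size $A\la^{1-\eps}\mu^{1-S}=A\la^{1-\eps S}$. This does not tend to zero when $\eps S\leq1$, which is exactly the regime of the later iteration, where $\eps_q\to0$ with $S$ fixed.

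The fix, which is the paper's argument, is to keep the weight frequency by frequency. For $s\in\mu\Z\setminus\{0\}$ and $\abs k\leq4\ell\leq\abs s/2$ one has $2\pi\abs{s+k_x}\langle(s,0)+k\rangle^{-S}\leq C_S\abs s^{1-S}$. Hence
\[
 \norm{\partial_x\bigl[h^2a_{L,\ell}^2(\rho_{\la,\eps}^2-1)\bigr]}_{\A^{-S}_{\R^2,t}}
 \leq C_S\norm{a_{L,\ell}^2}_{\A^0_{\R^2,t}}
 \sum_{s\in\mu\Z\setminus\{0\}}\abs{\widehat{\rho_{\la,\eps}^2}(s)}\abs s^{1-S}
 \leq C_{E}A\mu^{1-S}\sum_{j\neq0}\abs j^{1-S},
\]
and the last sum converges because $S>2$. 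With this replacement your $O(A\mu^{1-S})$ conclusion holds, and the rest of your argument goes through.
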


\begin{proof}
Fix $(u,E)$, the finite list of exponents, and the two tolerances.
Choose $A\geq1$ so large that \eqref{eq:Euclidean-A-choice},
\eqref{eq:Euclidean-amplitude-smallness}, and
\begin{equation*}
 \frac{C_{E,S,M,\chi}}{A}<\frac{\delta_w}8
\end{equation*}
  hold.  Keeping this $A$ fixed, choose $L\geq1$ sufficiently large that
\begin{equation*}
 \frac{C_{E,\chi}+C_\chi A}{L}<\frac{\delta_+}{8},
 \qquad
 C_{E,S,M,\chi}\left(\frac{A}{L}+\frac1L\right)<\frac{\delta_w}8.
\end{equation*}
After $A$ and $L$ have been fixed, choose $\ell\geq1$ sufficiently large
that $C_{E,M,\chi}A\ell^{-M}<\delta_+/8$.

Choose an admissible $\la$ large enough that
\eqref{eq:Euclidean-onestep-separation} holds and
$\la^{-\beta}<\operatorname{dist}(I,\{0,1\})$.  Choose $h$ satisfying
\eqref{eq:time-cutoff-lemma} with $\tau=\la^{-\beta}$.  Then
\begin{equation*}
 0\leq h\leq1,\qquad h=1\ \text{on }I,\qquad
 \norm{h'}_{L^\infty}\leq C\la^\beta.
\end{equation*}
Since $E$ is supported in $I$, one has $hE=E$.  Define $w,u^+,$ and $E^+$
by \eqref{eq:Euclidean-increment} and
\eqref{eq:Euclidean-update-general}.

\subsubsection*{Dispersion error}

The dispersion error in $\partial_xE^+$ is
\[
 \partial_x\left((-1)^{(d+1)/2}\partial_x^{d-1}w
 +\kappa\partial_x^{-2}\partial_y^2w\right).
\]
By \eqref{eq:Euclidean-mixed-w-Fourier}, the Fourier support of $w$ is
contained in the disks
\[
 \{(\xi,\eta):\abs{(\xi-n,\eta)}\leq2\ell\},
 \qquad n\in\mu\Z\setminus\{0\},\quad \abs n\leq2\nu.
\]
On each disk, $\abs\xi\simeq\abs n$, $\abs\eta\leq2\ell$, and
$\langle(\xi,\eta)\rangle^{-S}\lesssim_S\abs n^{-S}$.
Since $\abs{\widehat w(t,\xi,\eta)}\leq\norm{w(t)}_{L^1}$, integration
over the disks and summation over $n\in\mu\Z\setminus\{0\}$ give
\begin{align*}
 \norm{\partial_x^dw}_{\A^{-S}_{\R^2,t}}
 &\leq C_S\norm w_{C_tL^1}
 \sum_{n\in\mu\Z\setminus\{0\}}
 \int_{\abs{(\xi-n,\eta)}\leq2\ell}
   \abs\xi^d\langle(\xi,\eta)\rangle^{-S}\,d\xi\,d\eta\\
 &\leq C_S\ell^2\norm w_{C_tL^1}
   \sum_{n\in\mu\Z\setminus\{0\}}\abs n^{d-S}
 \leq C_S\ell^2\mu^{d-S}\norm w_{C_tL^1},\\
 \norm{\partial_x(\partial_x^{-2}\partial_y^2w)}
      _{\A^{-S}_{\R^2,t}}
 &\leq C_S\norm w_{C_tL^1}
 \sum_{n\in\mu\Z\setminus\{0\}}
 \int_{\abs{(\xi-n,\eta)}\leq2\ell}
   \frac{\eta^2}{\abs\xi}\langle(\xi,\eta)\rangle^{-S}
   \,d\xi\,d\eta\\
 &\leq C_S\ell^4\norm w_{C_tL^1}
   \sum_{n\in\mu\Z\setminus\{0\}}\abs n^{-S-1}
 \leq C_S\ell^4\mu^{-S-1}\norm w_{C_tL^1}.
\end{align*}
Consequently,
\begin{equation}\label{eq:Euclidean-onestep-dispersion}
 \norm{\partial_x\bigl((-1)^{(d+1)/2}\partial_x^{d-1}w
       +\kappa\partial_x^{-2}\partial_y^2w)}_{\A^{-S}_{\R^2,t}}
 \leq C_S\left(\ell^2\mu^{d-S}
             +\ell^4\mu^{-S-1}\right)\norm w_{C_tL^1}.
\end{equation}
By \eqref{eq:Euclidean-Lp} with $p=1$, this is at most
\[
 C_{E,A,L,\ell,S,\eps}\left(
 \la^{\eps(d-S)-\frac12(1-\eps)}
 +\la^{-\eps(S+1)-\frac12(1-\eps)}\right),
\]
which tends to zero because $S>d+1$.

\subsubsection*{Nash error}

The Nash error is $\partial_x(2uw)$.  Minkowski's inequality and
Proposition~\ref{prop:Euclidean-mixed-absolute}, applied using
\eqref{eq:Euclidean-onestep-separation}, give
\begin{equation}\label{eq:Euclidean-onestep-Nash}
 \norm{\partial_x(2uw)}_{\A^{-S}_{\R^2,t}}
 \leq C_{u,E,S,M,\chi,\eps}A^{1/2}
 \la^{-\frac12(1-\eps)-\eps(S-1)}.
\end{equation}

\subsubsection*{Oscillation error}

The oscillation error is
\[
 \partial_x(E+w^2)
 =\partial_x\left[h^2\left((1-\chi_L^2)E+A\chi_L^2
 +(a_{L,\ell}^2-a_L^2)
 +a_{L,\ell}^2(\rho_{\la,\eps}^2-1)\right)\right].
\]
Put
$K_L=\widehat{\chi_L}*\widehat{\chi_L}$.  Since
$K_L\geq0$ and $\int_{\R^2}K_L=1$,
\[
 \widehat{(1-\chi_L^2)E}(t,k)
 =\int_{\R^2}K_L(z)
   \bigl(\widehat E(t,k)-\widehat E(t,k-z)\bigr)\,dz.
\]
Minkowski's inequality and the fundamental theorem of calculus give
\begin{align*}
 \int_{\R^2}
 \norm{\widehat{(1-\chi_L^2)E}(\cdot,k)}_{C_t}\,dk
 &\leq\int_{\R^2}\abs z
 (\widehat{\chi_L}*\widehat{\chi_L})(z)\,dz
 \int_0^1\int_{\R^2}
 \norm{\nabla\widehat E(\cdot,k-\sigma z)}_{C_t}\,dk\,d\sigma\\
 &\leq\frac{C_\chi}{L}
 \int_{\R^2}\norm{\nabla\widehat E(\cdot,k)}_{C_t}\,dk.
\end{align*}
Here the last step is \eqref{eq:Euclidean-spatial-cutoff-moment}.
Therefore,
\begin{equation*}
 \norm{\partial_x\bigl(h^2(1-\chi_L^2)E\bigr)}
      _{\A^{-S}_{\R^2,t}}
 \leq\frac{C_{E,\chi}}{L}.
\end{equation*}
The localized constant term satisfies
\begin{equation*}
 \norm{\partial_x(h^2A\chi_L^2)}_{\A^{-S}_{\R^2,t}}
 \leq\frac{C_\chi A}{L}
\end{equation*}
by \eqref{eq:localized-constant-scaling}.  Since $m_\ell=1$ on
$\{\abs k\leq\ell\}$, \eqref{eq:Euclidean-amplitude-Wiener-bounds}
gives
\[
 \norm{a_L-a_{L,\ell}}_{\A^0_{\R^2,t}}
 \leq\int_{\abs k>\ell}\norm{\widehat a_L(\cdot,k)}_{C_t}\,dk
 \leq\ell^{-M}\norm{a_L}_{\A^M_{\R^2,t}}
 \leq C_{E,M,\chi}A^{1/2}\ell^{-M}.
\]
The Wiener algebra inequality therefore yields
\begin{equation*}
 \norm{\partial_x\bigl(h^2(a_{L,\ell}^2-a_L^2)\bigr)}
      _{\A^{-S}_{\R^2,t}}
 \leq C_S\norm{a_{L,\ell}-a_L}_{\A^0_{\R^2,t}}
 \left(\norm{a_{L,\ell}}_{\A^0_{\R^2,t}}
       +\norm{a_L}_{\A^0_{\R^2,t}}\right)
 \leq C_{E,M,\chi}A\ell^{-M}.
\end{equation*}

For the remaining term, write
\[
 \rho_{\la,\eps}^2-1
 =\sum_{s\in\mu\Z\setminus\{0\}}
 \widehat{\rho_{\la,\eps}^2}(s)e^{2\pi isx}.
\]
The normalization gives $\widehat{\rho_{\la,\eps}^2}(0)=1$.  Since
\[
 \abs{\widehat{\rho_{\la,\eps}^2}(s)}
 \leq\sum_n\abs{c_nc_{s-n}},
\]
\eqref{eq:Euclidean-frequency-sum-bound} yields
\begin{equation}\label{eq:Euclidean-onestep-profile-convolution}
 \sum_{s\in\mu\Z\setminus\{0\}}
 \abs{\widehat{\rho_{\la,\eps}^2}(s)}\abs s^{1-S}
 \leq C_S\mu^{1-S}.
\end{equation}
Since $\supp\widehat{a_{L,\ell}^2}\subset\{\abs k\leq4\ell\}$ and
$\mu\geq8\ell$, for $s\neq0$ and $\abs k\leq4\ell$ one has
\[
 2\pi\abs{s+k_x}\left\langle(s,0)+k\right\rangle^{-S}
 \leq C_S\abs s^{1-S}.
\]
Therefore, \eqref{eq:Euclidean-onestep-profile-convolution} and
\eqref{eq:Euclidean-amplitude-Wiener-bounds} give
\begin{align*}
 \norm{\partial_x\left[
 h^2a_{L,\ell}^2(\rho_{\la,\eps}^2-1)\right]}
 _{\A^{-S}_{\R^2,t}}
 &\leq C_S\sum_{s\in\mu\Z\setminus\{0\}}
 \abs{\widehat{\rho_{\la,\eps}^2}(s)}\abs s^{1-S}
 \norm{a_{L,\ell}^2}_{\A^0_{\R^2,t}}\\
 &\leq C_{E,S,M,\chi}A\mu^{1-S}.
\end{align*}
These estimates and \eqref{eq:Euclidean-cancellation} yield
\begin{equation}\label{eq:Euclidean-onestep-oscillation}
 \norm{\partial_x(E+w^2)}_{\A^{-S}_{\R^2,t}}
 \leq\frac{C_{E,\chi}}{L}+\frac{C_\chi A}{L}
 +C_{E,M,\chi}A\ell^{-M}+C_{E,S,M,\chi}A\mu^{1-S}.
\end{equation}

\subsubsection*{Temporal error}

The temporal error in $\partial_xE^+$ is $\partial_tw$, where
\[
 \partial_tw=h'a_{L,\ell}\rho_{\la,\eps}
 +h(\partial_ta_{L,\ell})\rho_{\la,\eps}.
\]
Since $A-E\geq A/2$,
\[
 \partial_ta_L=-\frac{\chi_L\partial_tE}{2(A-E)^{1/2}},
 \qquad
 \partial_ta_{L,\ell}=P_{\leq\ell}^{x,y}\partial_ta_L,
\]
and hence
\[
 \norm{a_{L,\ell}}_{C_tW^{3,1}}
 +\norm{\partial_ta_{L,\ell}}_{C_tW^{3,1}}
 \leq C_{E,A,L,\ell}.
\]
Applying \eqref{eq:Euclidean-periodic-decoupling} with $p=1$ gives
\begin{align*}
 \norm{\partial_tw}_{C_tL^1}
 &\leq C\left(\norm{h'}_{L^\infty}
       \norm{a_{L,\ell}}_{C_tW^{3,1}}
       +\norm{\partial_ta_{L,\ell}}_{C_tW^{3,1}}\right)
       \norm{\rho_{\la,\eps}}_{L^1(\T^2)}\\
 &\leq C_{E,A,L,\ell,\eps}
       (1+\la^\beta)\la^{-\frac12(1-\eps)}.
\end{align*}
The gap in \eqref{eq:Euclidean-cone} defines $\partial_x^{-1}w$ and gives
$\partial_x\partial_t\partial_x^{-1}w=\partial_tw$.  Since $S>2$,
\begin{equation}\label{eq:Euclidean-onestep-temporal}
 \norm{\partial_x(\partial_t\partial_x^{-1}w)}
      _{\A^{-S}_{\R^2,t}}
 \leq\left(\int_{\R^2}\langle k\rangle^{-S}\,dk\right)
       \norm{\partial_tw}_{C_tL^1}
 \leq C_{E,A,L,\ell,S,\eps}
       (1+\la^\beta)\la^{-\frac12(1-\eps)}
 \longrightarrow0.
\end{equation}

By \eqref{eq:Euclidean-update-general},
\begin{align*}
 \norm{\partial_xE^+}_{\A^{-S}_{\R^2,t}}
 \leq{}&\norm{\partial_x(E+w^2)}_{\A^{-S}_{\R^2,t}}
 +\norm{\partial_x(2uw)}_{\A^{-S}_{\R^2,t}}\\
 &+\norm{\partial_x\bigl((-1)^{(d+1)/2}\partial_x^{d-1}w+
       \kappa\partial_x^{-2}\partial_y^2w)}_{\A^{-S}_{\R^2,t}}
 +\norm{\partial_x(\partial_t\partial_x^{-1}w)}
       _{\A^{-S}_{\R^2,t}}.
\end{align*}
For the absolute Fourier bound, Proposition~\ref{prop:Euclidean-mixed-absolute}
gives
\begin{align*}
 \operatorname{AF}^{\partial_x}_S(u^+)
 \leq{}&\operatorname{AF}^{\partial_x}_S(u)
 +\norm{\partial_xE}_{\A^{-S}_{\R^2,t}}\\
 &+C_{E,S,M,\chi}\left(
   \frac{A}{L}+\frac1L+\frac1A+A\mu^{1-S}\right)\\
 &+C_{u,E,S,M,\chi,\eps}A^{1/2}
 \la^{-\frac12(1-\eps)-\eps(S-1)}.
\end{align*}
The choices of $A,L,$ and $\ell$ control all $\la$-independent remainder
terms.  It remains to impose the separation and time-support conditions,
the finitely many bounds \eqref{eq:Euclidean-onestep-Lp}, the
$\la$-dependent terms in
\eqref{eq:Euclidean-onestep-dispersion}--
\eqref{eq:Euclidean-onestep-temporal}, and the two terms containing
$\mu$ or $\la$ in the last display.  The support conditions hold for all
sufficiently large admissible $\la$, and every listed norm or error term
tends to zero.  One sufficiently large choice of $\la$ therefore proves
\eqref{eq:Euclidean-onestep-Lp}--
\eqref{eq:Euclidean-onestep-absolute}.

\end{proof}

\section{Cubic estimates for the modified KP equations}

We construct one profile with frequency scale $\la$ in both variables and
another with $x$-frequency scale $\la$ and $y$-frequency scale $\la^2$.
The stronger $L^2$ decay of the latter yields the $x$-Sobolev exponent
$1/2$.

\begin{lemma}[Two-dimensional cubic profiles]\label{lem:cubic-profile}
Fix $0<\eps<1$.  For every sufficiently large admissible $\la$, set
\begin{equation*}
 \mu=\la^\eps,\qquad
 J=\left\lfloor\frac{\la}{\mu}\right\rfloor.
\end{equation*}
There are real trigonometric polynomials
$\rho_{3,\la,\eps}$ and
$\widetilde{\rho}_{3,\la,\eps}$ such that
\begin{align}
 &\int_{\T^2}(\rho_{3,\la,\eps})^3\,dx\,dy
 =\int_{\T^2}(\widetilde{\rho}_{3,\la,\eps})^3\,dx\,dy=1,
 \label{eq:cubic-profile-moment}\\
 &\widehat\rho_{3,\la,\eps}(n,m)\geq0,
 \qquad
 \widehat{\widetilde{\rho}}_{3,\la,\eps}(n,m)\geq0,\notag\\
 &\supp\widehat\rho_{3,\la,\eps}
 \cup\supp\widehat{\widetilde{\rho}}_{3,\la,\eps}
 \subset(\mu\Z)^2,
 \label{eq:cubic-profile-positivity}\\
 &7\mu J\leq\abs n\leq17\mu J
 \quad\text{on }
 \supp\widehat\rho_{3,\la,\eps}
 \cup\supp\widehat{\widetilde{\rho}}_{3,\la,\eps}.
 \label{eq:cubic-profile-support}
\end{align}
For large $\la$,
\begin{equation}\label{eq:cubic-profile-support-bounds}
 \begin{aligned}
 \supp\widehat\rho_{3,\la,\eps}
 &\subset\{(n,m):3\la\leq\abs n\leq18\la,\ \abs m\leq2\la\},\\
 \supp\widehat{\widetilde{\rho}}_{3,\la,\eps}
 &\subset\{(n,m):3\la\leq\abs n\leq18\la,\ \abs m\leq2\la^2\}.
 \end{aligned}
\end{equation}
For $1\leq p\leq\infty$,
\begin{align}
 \norm{\rho_{3,\la,\eps}}_{L^p(\T^2)}
 &\leq C_{p,\eps}\la^{2(1-\eps)(1/3-1/p)},\notag\\
 \norm{\widetilde{\rho}_{3,\la,\eps}}_{L^p(\T^2)}
 &\leq C_{p,\eps}\la^{3(1-\eps)(1/3-1/p)}.
 \label{eq:cubic-profile-Lp}
\end{align}
Their Fourier coefficients obey
\begin{align}
 \widehat\rho_{3,\la,\eps}(k)
 &\leq C_\eps\la^{-4(1-\eps)/3},\notag\\
 \widehat{\widetilde{\rho}}_{3,\la,\eps}(k)
 &\leq C_\eps\la^{-2(1-\eps)},
 \label{eq:cubic-profile-coefficient-bound}\\
 \norm{\rho_{3,\la,\eps}}_{L^2(\T^2)}^2
 &\leq C_\eps\la^{-2(1-\eps)/3},\notag\\
 \norm{\widetilde{\rho}_{3,\la,\eps}}_{L^2(\T^2)}^2
 &\leq C_\eps\la^{-(1-\eps)},\notag\\
 \norm{\rho_{3,\la,\eps}}_{L^3(\T^2)}^3
 +\norm{\widetilde{\rho}_{3,\la,\eps}}_{L^3(\T^2)}^3
 &\leq C_\eps.
 \label{eq:cubic-profile-endpoints}
\end{align}
Moreover,
\begin{align}
 \sum_{k\in\Z^2}\widehat\rho_{3,\la,\eps}(k)
 &\leq C_\eps\la^{2(1-\eps)/3},\notag\\
 \sum_{k\in\Z^2}\widehat{\widetilde{\rho}}_{3,\la,\eps}(k)
 &\leq C_\eps\la^{1-\eps}.
 \label{eq:cubic-profile-Fourier-mass}
\end{align}
\begin{equation}
\begin{aligned}
 0\leq\widehat{(\rho_{3,\la,\eps})^3}(s)
 &\leq C_\eps,\\
 0\leq\widehat{(\widetilde{\rho}_{3,\la,\eps})^3}(s)
 &\leq C_\eps
 \qquad(s\in\Z^2).
\end{aligned}
\label{eq:cubic-profile-cubic-coefficient-bound}
\end{equation}
\end{lemma}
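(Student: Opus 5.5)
Each profile will be built as a tensor product of two one-dimensional Fejér-type kernels, modulated to $x$-frequency of order $\la$ and sampled on the lattice $\mu\Z$; it is then normalized so that the cubic moment equals one. We take $J=\lfloor\la/\mu\rfloor$, the Fejér kernel $F_J(\theta)=\sum_{\abs j\leq J}(1-\abs j/J)e^{2\pi ij\theta}$, and $G_J=F_J^2$. The square $G_J$ has nonnegative coefficients supported in $\abs j\leq2J$, has $L^p(\T)$ norm comparable to $J^{2-1/p}$, and satisfies $G_J\geq cJ^2$ on $\abs\theta\leq c/J$. Set
\[
 R(x)=\cos(2\pi\cdot12\mu Jx)\,G_J(\mu x)\quad(\text{or the version with frequency }\pm12\mu J\text{ shifted by }\mu j),
\]
so that the $x$-frequencies are $\pm12\mu J+\mu j$ with $\abs j\leq2J$. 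These satisfy $10\mu J\leq\abs n\leq14\mu J$, which lies inside \eqref{eq:cubic-profile-support}, and $\mu J\in[\la/2,\la]$ gives \eqref{eq:cubic-profile-support-bounds}. In $y$ we use $G_J(\mu y)$ for the isotropic profile and $G_{J'}(\mu y)$ with $J'\simeq J^2\mu$ for the parabolic profile. The latter has $y$-frequencies up to $2\mu J'\lesssim\la^2$, matching the stated box. All Fourier coefficients are nonnegative and lie in $(\mu\Z)^2$.

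The first and central step is the cubic moment. Because the coefficients are nonnegative, $\widehat{\rho^3}(s)\geq0$ for every $s$, and the nonnegativity in \eqref{eq:cubic-profile-cubic-coefficient-bound} follows at once. Positivity of the mean $\int\rho^3$ is harder, because $\cos^3$ averages to zero. This is the main obstacle. I would first fix it by using three frequencies that can sum to zero, namely the carrier $\cos(2\pi\cdot 8\mu Jx)+\cos(2\pi\cdot16\mu Jx)$, since $8+8-16=0$. A cube of this carrier has a positive mean (equal to $3/4$ times the product of the amplitudes), all its frequencies still lie in the range $[7,17]\mu J$, and every cubic coefficient stays nonnegative. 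The unnormalized moment is then $\int R^3\simeq\int G_J^3(\mu x)\,dx\simeq J^5$, multiplied by the corresponding $y$ factor. A more careful check would localize to $\abs{\mu x}\lesssim1/J$, where the carrier is essentially constant, but positivity of all Fourier coefficients already makes the mean equal to the sum of the contributions from the frequency triples with zero sum. Each such contribution is nonnegative, and the diagonal ones give a lower bound of order $\norm{G_J}_{L^3}^3$ by Parseval applied to the $G_J$ factors.

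We then divide by the cube root of the total moment. The $L^p$ bounds follow from scaling: the normalized profile equals $\|\cdot\|_3^{-1}$ times a product of bumps of width $1/(\mu J)$, repeated $\mu$ times, so $\norm\rho_p\simeq N^{1/3-1/p}$ with $N=J^2$ (isotropic) or $N=JJ'/\mu\cdot\mu\simeq J^3$ up to $\la^{O(\eps)}$ factors (parabolic). The exponents $2(1-\eps)$ and $3(1-\eps)$ in \eqref{eq:cubic-profile-Lp} come from this. The $L^2$ and $L^3$ endpoints are the cases $p=2$ and $p=3$. For the Fourier bounds, the coefficient bound is $\widehat\rho(k)\leq\norm\rho_{L^1}\lesssim N^{-2/3}$, and the Fourier mass is $\sum_k\widehat\rho(k)=\rho(0)=\norm\rho_\infty\lesssim N^{1/3}$ by positivity. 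Finally, $\widehat{\rho^3}(s)\leq\norm{\rho^3}_{L^1}\lesssim1$ gives the upper bound in \eqref{eq:cubic-profile-cubic-coefficient-bound}.

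The delicate points are: matching the parabolic exponents exactly, which may require choosing $J'=J^2$ and tolerating $\la^{C\eps}$ losses absorbed into the constants; and confirming that the moment is exactly normalizable. The latter holds because the moment is a strictly positive number, so the cube-root normalization is always available.
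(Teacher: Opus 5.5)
Your construction is essentially the paper's: Fej\'er-type bumps at scale $J$ on the lattice $\mu\Z$, the zero-sum carrier $8+8-16=0$, cube-root normalization, and positivity for the Fourier mass and cubic coefficients. Your lower bound $\int C^3G^3=\sum_s\widehat{C^3}(s)\widehat{G^3}(-s)\geq\widehat{C^3}(0)\int G^3$ is a cleaner substitute for the paper's count of zero-sum triples with Fej\'er weights at least $3/4$. However, two of your parameter choices make parts of the lemma false as written.

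First, the $x$-window \eqref{eq:cubic-profile-support} fails. The coefficients of $G_J=F_J^2$ extend to $\abs j\leq 2J-2$. So the carriers $\pm8\mu J,\pm16\mu J$ produce $x$-frequencies filling $[6\mu J+2\mu,10\mu J-2\mu]\cup[14\mu J+2\mu,18\mu J-2\mu]$. You checked only the carrier frequencies, not their $\pm2\mu J$ spread, although you did include that spread for the $12\mu J$ carrier. No choice of carriers fixes this. With spread $2\mu J$, every carrier must lie in $[9\mu J,15\mu J]$ in absolute value, but a zero-sum triple contains two carriers of the same sign, whose sum is at least $18\mu J$. You need a narrower $x$-kernel, such as $F_J(\mu x)$ itself (spread below $\mu J$, as in the paper) or $G_{\lfloor J/2\rfloor}(\mu x)$. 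The exponents are unchanged, because the normalized $L^p$ norms scale like $V^{1/p-1/3}$ in the bump volume $V$, and a constant-factor change of the $x$-kernel does not affect that.

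Second, the parabolic choice $J'\simeq J^2\mu$ is too large. The bump then has volume $\simeq(JJ')^{-1}=(J^3\mu)^{-1}$, so $\norm{\widetilde\rho}_{L^p}\simeq(J^3\mu)^{1/3-1/p}$. For $p>3$ this exceeds the required $C_{p,\eps}\la^{3(1-\eps)(1/3-1/p)}$ by the factor $\mu^{1/3-1/p}=\la^{\eps(1/3-1/p)}$. Likewise $\sum_k\widehat{\widetilde\rho}(k)=\widetilde\rho(0,0)\simeq J\mu^{1/3}$ violates \eqref{eq:cubic-profile-Fourier-mass}. These are growing powers of $\la$, so they cannot be absorbed into constants $C_{p,\eps}$, $C_\eps$ that must be uniform in $\la$. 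Your fallback $J'=J^2$ is the right choice and costs nothing. The $y$-frequencies stay below $2\mu J^2\leq 2\la^2$, the volume is exactly $J^{-3}$, and every stated bound follows; this is the paper's factor $F_{J^2}(\mu y)$. With these two corrections your argument proves the lemma.
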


\begin{proof}
Define the Fej\'er kernel and trigonometric polynomial by
\[
 F_J(z)=\sum_{\abs a<J}\left(1-\frac{\abs a}{J}\right)e^{2\pi iaz},
 \qquad
 Q_J(z)=\sum_{q\in\{\pm8J,\pm16J\}}e^{2\pi iqz}.
\]
The unnormalized profiles are
\begin{equation*}
 \begin{aligned}
 R(x,y)
 &=Q_J(\mu x)F_J(\mu x)F_J(\mu y),\\
 \widetilde R(x,y)
 &=Q_J(\mu x)F_J(\mu x)F_{J^2}(\mu y),
 \end{aligned}
\end{equation*}
Normalize them by
\[
 \rho_{3,\la,\eps}
 =\frac{R}
 {\left(\int_{\T^2}R^3\,dx\,dy\right)^{1/3}},
 \qquad
 \widetilde{\rho}_{3,\la,\eps}
 =\frac{\widetilde R}
 {\left(\int_{\T^2}(\widetilde R)^3\,dx\,dy\right)^{1/3}}.
\]
The Fourier coefficients are
\begin{equation}\label{eq:cubic-profile-raw-Fourier}
 \begin{aligned}
 \widehat R(\mu a,\mu b)
 &=\sum_{q\in\{\pm8J,\pm16J\}}
 \left(1-\frac{\abs{a-q}}J\right)_+
 \left(1-\frac{\abs b}J\right)_+,\\
 \widehat{\widetilde R}(\mu a,\mu b)
 &=\sum_{q\in\{\pm8J,\pm16J\}}
 \left(1-\frac{\abs{a-q}}J\right)_+
 \left(1-\frac{\abs b}{J^2}\right)_+.
 \end{aligned}
\end{equation}
Both transforms vanish off $(\mu\Z)^2$.  The four translates in
\eqref{eq:cubic-profile-raw-Fourier} are disjoint, so their coefficients
lie in $[0,1]$, and
\[
 7J\leq\abs a\leq17J
\]
on either support; the transverse bounds are $\abs b\leq J$ and
$\abs b\leq J^2$, respectively.

The Fej\'er estimates
\begin{equation*}
 \norm{F_J}_{L^p(\T)}\leq C_pJ^{1-1/p},
 \qquad1\leq p\leq\infty,
\end{equation*}
give
\begin{align}
 \norm{R}_{L^p(\T^2)}
 &\leq C_pJ^{2(1-1/p)},\notag\\
 \norm{\widetilde R}_{L^p(\T^2)}
 &\leq C_pJ^{3(1-1/p)}.
 \label{eq:cubic-profile-raw-Lp}
\end{align}
All Fourier coefficients are nonnegative.  Using the zero-sum carrier triple
$(8J,8J,-16J)$,
\[
 \int_{\T^2}R^3\,dx\,dy\geq cJ^4,
 \qquad
 \int_{\T^2}(\widetilde R)^3\,dx\,dy\geq cJ^6.
\]
For this carrier triple, choose the first two $x$-frequency deviations with
magnitude at most $J/8$ and set the third equal to minus their sum.  This
gives at least $cJ^2$ choices in the $x$-variable.  In the $y$-variable,
choose the first two frequencies with magnitude at most $J/8$ for $R$ and
at most $J^2/8$ for $\widetilde R$, and set the third equal to minus their
sum.  Every Fej\'er coefficient selected in this way is at least $3/4$.
This gives at least $cJ^2$ and $cJ^4$ choices, respectively.  The
$p=3$ case of
\eqref{eq:cubic-profile-raw-Lp} gives the reverse bounds, and hence
\begin{equation}\label{eq:cubic-profile-normalizer}
 \begin{aligned}
 cJ^{4/3}
 &\leq\left(\int_{\T^2}R^3\,dx\,dy\right)^{1/3}
 \leq CJ^{4/3},\\
 cJ^2
 &\leq\left(\int_{\T^2}(\widetilde R)^3\,dx\,dy\right)^{1/3}
 \leq CJ^2.
 \end{aligned}
\end{equation}
Equations \eqref{eq:cubic-profile-raw-Fourier},
\eqref{eq:cubic-profile-raw-Lp}, and
\eqref{eq:cubic-profile-normalizer} give
\eqref{eq:cubic-profile-moment}--\eqref{eq:cubic-profile-endpoints}, since
$\frac12\la^{1-\eps}\leq J\leq\la^{1-\eps}$ for large $\la$.
Indeed, normalization bounds the coefficients by $CJ^{-4/3}$ and $CJ^{-2}$,
while the $p=2$ case of \eqref{eq:cubic-profile-raw-Lp} gives $L^2$ norms
bounded by $CJ^{-1/3}$ and $CJ^{-1/2}$, respectively; the $p=3$ bounds
become uniform.
Nonnegativity of the Fourier coefficients also gives
\[
 \sum_k\widehat\rho_{3,\la,\eps}(k)
 =\rho_{3,\la,\eps}(0,0)\lesssim J^{2/3},
 \qquad
 \sum_k\widehat{\widetilde{\rho}}_{3,\la,\eps}(k)
 =\widetilde{\rho}_{3,\la,\eps}(0,0)\lesssim J,
\]
which proves \eqref{eq:cubic-profile-Fourier-mass}.  The Fourier
coefficients of each cube are nonnegative because those of the profile are
nonnegative.  Moreover,
\[
 \abs{\widehat{(\rho_{3,\la,\eps})^3}(s)}
 \leq\norm{\rho_{3,\la,\eps}}_{L^3}^3,
 \qquad
 \abs{\widehat{(\widetilde{\rho}_{3,\la,\eps})^3}(s)}
 \leq\norm{\widetilde{\rho}_{3,\la,\eps}}_{L^3}^3.
\]
Thus \eqref{eq:cubic-profile-endpoints} gives
\eqref{eq:cubic-profile-cubic-coefficient-bound}.
\end{proof}

\subsection{Periodic cubic estimates}

The cubic expansion contains both $u^2w$ and $uw^2$.  Frequency separation
controls $u^2w$; in $uw^2$, the two perturbation frequencies can cancel,
so the decay comes from the small $L^2$ mass of the profile.  At zero total
profile frequency, the terms in $w^3$ that are linear in $E$ must
contribute $-E$.

\begin{proposition}
\label{prop:cubic-periodic-estimate}
Fix $0<\eps<1$ and $S>2$.  Let $\rho$ be either
$\rho_{3,\la,\eps}$ or
$\widetilde{\rho}_{3,\la,\eps}$ from
Lemma~\ref{lem:cubic-profile}.  Let $u$ and $E$ be real functions in
$C_tC^\infty(\T^2)$ with finite spatial Fourier support and zero
$x$-mean.  Choose
$A\geq1+2\norm E_{C_tL^\infty}$ so large that
$\norm E_{\A^0_t}/A\leq1/2$.  Let $0\leq h\leq1$ depend only on time
and satisfy $hE=E$.  For a sufficiently large admissible $\la$,
put
\begin{equation*}
 b=P_{\leq\mu^{1/2}}^{x,y}(A-E)^{1/3},
 \qquad B=hb,
 \qquad w=B\rho.
\end{equation*}
Assume that
\[
 \mu^{1/2}>16\max\bigl(\{1\}\cup
 \{\abs k:k\in\supp\widehat u\cup\supp\widehat E\}\bigr),
 \qquad \mu\geq256.
\]
Then
\begin{align*}
 \operatorname{AF}_{S,3}^x(w)
 &\leq\norm E_{\A^{-S}_{x\neq0,t}}
   +C_{E,S}\left(A^{-1}+A\mu^{-S}\right),\\
 \operatorname{AF}_{S,3}^x(u,u,w)
 &\leq C_{u,E,S,\eps}A^{1/3}
   \la^{-4(1-\eps)/3}\mu^{-S},\\
 \operatorname{AF}_{S,3}^x(u,w,w)
 &\leq C_{u,E,S,\eps}A^{2/3}
   \la^{-2(1-\eps)/3}.
\end{align*}
\end{proposition}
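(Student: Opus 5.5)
The plan is to follow the proof of Proposition~\ref{prop:periodic-absolute-summability}, with the quadratic binomial expansion replaced by the expansion of $(A-E)^{1/3}$ and the pairing of two profile frequencies replaced by triples. First I expand
\[
 B=hb=B_0+B_1+B_{\geq2},\qquad B_0=A^{1/3}h,\qquad B_1=-\tfrac13A^{-2/3}E,
\]
with $B_{\geq2}=A^{1/3}\sum_{j\geq2}\binom{1/3}{j}(-1)^jA^{-j}P_{\leq\mu^{1/2}}^{x,y}(E^j)$. Here I use $hE^j=E^j$ and the fact that the cutoff fixes $1$ and $E$. The bound $\norm E_{\A^0_t}/A\leq1/2$ gives $\norm{B_{\geq2}}_{\A^0_t}\lesssim_E A^{-5/3}$ and $\norm B_{\A^0_t}\lesssim_E A^{1/3}$.

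Next I write $\rho=\sum_j c_je^{2\pi ij\cdot z}$ with $j\in(\mu\Z)^2$ and $c_j\geq0$. The sets $j+\supp\widehat B$ are pairwise disjoint, because lattice points differ by at least $\mu>4\mu^{1/2}$. Hence
\[
 \operatorname{AF}^x_{S,3}(w)=\sum_{j_1,j_2,j_3}c_{j_1}c_{j_2}c_{j_3}\sum_{\ell_1,\ell_2,\ell_3}\langle s+\ell_1+\ell_2+\ell_3\rangle^{-S}\norm{\textstyle\prod\widehat B(\cdot,\ell_i)}_{C_t},
\qquad s=j_1+j_2+j_3,
\]
restricted to nonzero total $x$-frequency. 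I group the terms by $s$. The total coefficient at $s$ is exactly $\widehat{\rho^3}(s)\geq0$, since all $c_j\geq0$.

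For $s=0$ the coefficient is $\widehat{\rho^3}(0)=\int\rho^3=1$ by \eqref{eq:cubic-profile-moment}, and the inner sum is $\operatorname{AF}^x_{S,3}(B)$. Expanding trilinearly, the $B_0^3$ term is $A\,\|h^3\widehat{1}\|$ at frequency $0$, which is excluded because its $x$-frequency vanishes. The three terms $B_0B_0B_1$ give exactly $3\cdot\tfrac13 A^{2/3}A^{-2/3}\norm{h^2E}=\norm E_{\A^{-S}_{x\neq0,t}}$. The term $B_0B_1B_1$ is bounded by $A^{1/3}A^{-4/3}\norm E_{\A^0}^2\lesssim A^{-1}$. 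Every term containing $B_{\geq2}$ is at most $A^{2/3}A^{-5/3}\lesssim A^{-1}$, and $B_1^3$ is even smaller.

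For $s\neq0$ we have $|s|\geq\mu$ and $|\ell_1+\ell_2+\ell_3|\leq6\mu^{1/2}\leq|s|/2$, so the weight is at most $C|s|^{-S}$. Using $\widehat{\rho^3}(s)\leq C_\eps$ from \eqref{eq:cubic-profile-cubic-coefficient-bound} and summing $\sum_{s\in(\mu\Z)^2\setminus0}|s|^{-S}\lesssim\mu^{-S}$ (valid since $S>2$), this part is at most $C\mu^{-S}\norm B_{\A^0}^3\lesssim A\mu^{-S}$. This proves the first bound.

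For $\operatorname{AF}(u,u,w)$, the profile frequency $j\neq0$ cannot cancel. By \eqref{eq:cubic-profile-support} and the smallness of the $u$ and $B$ frequencies relative to $\mu^{1/2}$, the output frequency is at least $|j|/2$. The bound is therefore $\norm u_{\A^0}^2\norm B_{\A^0}\sum_j c_j|j|^{-S}$. I then use the coefficient bound $c_j\leq C\la^{-4(1-\eps)/3}$ from \eqref{eq:cubic-profile-coefficient-bound} and the lattice sum $\lesssim\mu^{-S}$; for $\widetilde\rho$ the coefficient bound $\la^{-2(1-\eps)}$ is even better.

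For $\operatorname{AF}(u,w,w)$, the two profile frequencies can cancel, so I do not use the weight. I bound $\langle\cdot\rangle^{-S}\leq1$ and get $\norm u_{\A^0}\norm B_{\A^0}^2\big(\sum_j c_j\big)^2$. That estimate is too lossy, so the main obstacle is this $uw^2$ term. For fixed $s$, I instead use $\sum_{j_1+j_2=s}c_{j_1}c_{j_2}=\widehat{\rho^2}(s)$ and group by $s$. Near $s=0$, more precisely for $|s|\lesssim K+\mu^{1/2}$ where only $s=0$ survives on the lattice, the coefficient is $\norm\rho_2^2\leq C\la^{-2(1-\eps)/3}$. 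For $s\neq0$, the weight contributes $|s|^{-S}$, and $\widehat{\rho^2}(s)\leq\norm\rho_2^2$ with $\sum_s|s|^{-S}\lesssim\mu^{-S}\leq1$. Altogether this gives $A^{2/3}\la^{-2(1-\eps)/3}$, using \eqref{eq:cubic-profile-endpoints}.
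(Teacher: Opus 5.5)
Your proposal is correct and follows the paper's proof essentially step for step. It uses the same cube-root expansion $B=B_0+B_1+B_{\geq2}$, in which the three terms linear in $E$ give exactly $\norm E_{\A^{-S}_{x\neq0,t}}$ and the remaining terms are $O(A^{-1})$. It uses the same exact expansion over disjoint profile translates grouped by total profile frequency, with $\widehat{\rho^3}(0)=1$ and the uniform cubic-coefficient bound plus a lattice sum for $s\neq0$. It bounds the $uuw$ term by the profile coefficient bound and a lattice sum, and handles the $uww$ term by grouping over $s=q_1+q_2$ with $\widehat{\rho^2}(s)\leq\norm{\rho}_{L^2}^2$.
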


\begin{proof}
The cube-root series in $\A^0_t$ gives
\begin{equation}\label{eq:cubic-periodic-amplitude-expansion}
 \begin{split}
 B={}&B_0+B_1+B_{\geq2},\\
 B_0={}&A^{1/3}h,\\
 B_1={}&-\frac{1}{3A^{2/3}}E,\\
 B_{\geq2}={}&A^{1/3}\sum_{j\geq2}
  \binom{1/3}{j}(-1)^jA^{-j}
  P_{\leq\mu^{1/2}}^{x,y}(E^j).
 \end{split}
\end{equation}
Since $\supp\widehat E\subset\{\langle k\rangle<\mu^{1/2}\}$,
$P_{\leq\mu^{1/2}}^{x,y}E=E$; also $hE^j=E^j$ for $j\geq1$.
The Wiener algebra inequality gives
\begin{equation}\label{eq:cubic-periodic-amplitude-remainder}
 \norm{B_{\geq2}}_{\A^0_t}
 \leq C_EA^{-5/3},
 \qquad
 \norm B_{\A^0_t}\leq C_EA^{1/3}.
\end{equation}

Expand $\operatorname{AF}_{S,3}^x(B)$ using
\eqref{eq:cubic-periodic-amplitude-expansion}.  The three terms linear in
$E$ have total absolute coefficient one and contribute
\[
 \sum_{k_x\neq0}\langle k\rangle^{-S}
  \norm{h^2\widehat E(\cdot,k)}_{C_t}
=\norm E_{\A^{-S}_{x\neq0,t}}.
\]
Every remaining term in $\operatorname{AF}_{S,3}^x(B)$ contains
either two copies of $B_1$ or one copy of $B_{\geq2}$.  Using
$\langle k\rangle^{-S}\leq1$ and
\eqref{eq:cubic-periodic-amplitude-remainder} gives
\begin{equation}\label{eq:cubic-periodic-amplitude-AF}
 \operatorname{AF}_{S,3}^x(B)
 \leq\norm E_{\A^{-S}_{x\neq0,t}}+C_EA^{-1}.
\end{equation}

Cauchy--Schwarz and Lemma~\ref{lem:cubic-profile} give
\begin{equation*}
 \widehat{\rho^3}(0)=1,
 \qquad
 0\leq\widehat{\rho^2}(s)\leq\norm{\rho}_{L^2(\T^2)}^2
 \leq C_\eps\la^{-2(1-\eps)/3}
 \quad(s\in\Z^2).
\end{equation*}

The Fourier translates of $B$ by the frequencies in
$\supp\widehat\rho$ are disjoint.  In the exact expansion of
$\operatorname{AF}_{S,3}^x(w)$, the terms with
$q_1+q_2+q_3=0$ therefore contribute
$\operatorname{AF}_{S,3}^x(B)$, multiplied by
$\widehat{\rho^3}(0)=1$.  If
$s=q_1+q_2+q_3$ is nonzero, the sum of the three frequencies from $B$
has magnitude at most $6\mu^{1/2}$, and hence at most $\abs s/2$ since
$\mu\geq256$.
It follows from \eqref{eq:cubic-profile-cubic-coefficient-bound} that the
remaining terms are at most
\begin{align*}
 C_S\norm B_{\A^0_t}^3
 \sum_{s\in(\mu\Z)^2\setminus\{0\}}\abs s^{-S}
 &\leq C_{E,S}A\mu^{-S}
 \sum_{j\in\Z^2\setminus\{0\}}\abs j^{-S}\\
 &\leq C_{E,S}A\mu^{-S}.
\end{align*}
For $q\in\supp\widehat\rho$, the sum of $q$,
two frequencies from $\supp\widehat u$, and one frequency from $B$ has
magnitude at least $\abs q/2$.  Hence
\begin{align*}
 \operatorname{AF}_{S,3}^x(u,u,w)
 &\leq C_S\norm u_{\A^0_t}^2\norm B_{\A^0_t}
   \sum_{q\neq0}\widehat\rho(q)\abs q^{-S}
 \\
 &\leq C_{u,E,S,\eps}A^{1/3}
   \la^{-4(1-\eps)/3}\mu^{-S},
\end{align*}
where we used \eqref{eq:cubic-profile-coefficient-bound} and extended the finite
sum over $q\in\supp\widehat\rho$ to
$(\mu\Z)^2\setminus\{0\}$.

To estimate $\operatorname{AF}_{S,3}^x(u,w,w)$, split according to
$s=q_1+q_2$.  For every $s$,
$\widehat{\rho^2}(s)\leq\norm{\rho}_{L^2}^2$.  If $s=0$, then
$\langle k\rangle^{-S}\leq1$.  If $s\neq0$, the support assumption and
$\mu\geq256$ imply that
the sum of the frequency from $u$ and the two frequencies from $B$ has
magnitude at most $\abs s/2$; hence
$\langle k\rangle^{-S}\lesssim_S\abs s^{-S}$.  Therefore,
\begin{align*}
 \operatorname{AF}_{S,3}^x(u,w,w)
 &\leq C_S\norm u_{\A^0_t}\norm B_{\A^0_t}^2
  \la^{-2(1-\eps)/3}
  \left(1+\sum_{s\in(\mu\Z)^2\setminus\{0\}}\abs s^{-S}\right)
 \\
 &\leq C_{u,E,S,\eps}A^{2/3}
  \la^{-2(1-\eps)/3}.
\end{align*}
\end{proof}

\subsection{Whole-space cubic estimates}

The convolution $\widehat{\chi_L}*\widehat{\chi_L}*\widehat{\chi_L}$ is
nonnegative, has integral one, and
\begin{equation}\label{eq:cubic-spatial-cutoff-moment}
 \int_{\R^2}\abs z
 (\widehat{\chi_L}*\widehat{\chi_L}*\widehat{\chi_L})(z)\,dz
 \leq\frac{C_\chi}{L}.
\end{equation}
The proof of Lemma~\ref{lem:Euclidean-positive-approximation} also gives,
for every nonnegative $F\in L^1(\R^2)$,
\begin{equation}\label{eq:cubic-positive-approximation}
 \int_{\R^2}2\pi\abs\xi\langle k\rangle^{-S}
 \bigl[(\widehat{\chi_L}*\widehat{\chi_L}*
 \widehat{\chi_L})*F\bigr](k)\,dk
 \leq\int_{\R^2}2\pi\abs\xi\langle k\rangle^{-S}F(k)\,dk
   +\frac{C_{\chi,S}}L\norm F_{L^1}.
\end{equation}

\begin{proposition}
\label{prop:cubic-Euclidean-estimate}
Fix $0<\eps<1$, $S>3$, and $M>S+6$.  Let $\rho$ be either
$\rho_{3,\la,\eps}$ or
$\widetilde{\rho}_{3,\la,\eps}$ from
Lemma~\ref{lem:cubic-profile}.  Let
$E\in C_t\mathcal S(\R^2)$ be real, and choose $A$ so large that
\begin{equation}\label{eq:cubic-Euclidean-A}
 A\geq2\norm E_{C_tL^\infty},
 \qquad
 \frac{C_{\mathrm{alg},M}\norm E_{\A^M_{\R^2,t}}}{A}\leq\frac12.
\end{equation}
For $L,\ell\geq1$, define
\begin{equation}\label{eq:cubic-Euclidean-amplitude}
 a_L=\chi_L(A-E)^{1/3},
 \qquad a_{L,\ell}=P_{\leq\ell}^{x,y}a_L.
\end{equation}
Let $0\leq h\leq1$ be a smooth function of time satisfying $hE=E$,
and set
\[
 w=h a_{L,\ell}\rho.
\]
Suppose
\begin{equation*}
 \mu\geq16\ell.
\end{equation*}
Then
\begin{equation*}
 \operatorname{AF}^{\partial_x}_{S,3}(w)
 \leq\norm{\partial_xE}_{\A^{-S}_{\R^2,t}}
  +C_{E,S,M,\chi}\left(
    \frac AL+\frac1L+\frac1A+A\mu^{1-S}\right).
\end{equation*}
Suppose also that $u\in C_tL^1(\R^2)$,
$\norm u_{\A^0_{\R^2,t}}<\infty$,
$\widehat u(t,k)=0$ for $\abs k>R$, where $R\geq1$, and
$\mu>8R$.  Then
\begin{align*}
 \operatorname{AF}^{\partial_x}_{S,3}(u,u,w)
 &\leq C_{u,E,S,M,\chi,\eps}A^{1/3}
   \la^{-4(1-\eps)/3}\mu^{1-S},\\
 \operatorname{AF}^{\partial_x}_{S,3}(u,w,w)
 &\leq C_{u,E,S,M,\chi,\eps}A^{2/3}
   \la^{-2(1-\eps)/3}.
\end{align*}
\end{proposition}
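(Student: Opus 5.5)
The plan is to merge the periodic cubic argument of Proposition~\ref{prop:cubic-periodic-estimate} with the whole-space quadratic machinery of Proposition~\ref{prop:Euclidean-amplitude-exact-coefficient} and the proposition bounding $\operatorname{AF}^{\partial_x}_S(w)$. First expand the cube root: under \eqref{eq:cubic-Euclidean-A}, the series
\[
 h a_L=A^{1/3}h\chi_L-\tfrac{1}{3A^{2/3}}\chi_LE+\sum_{j\geq2}\binom{1/3}{j}(-1)^jA^{1/3-j}\chi_LE^j
\]
converges in $\A^M_{\R^2,t}$ uniformly in $L\geq1$. This uses $\norm{\chi_L}_{\A^M}\leq\int\langle z\rangle^M\widehat\chi$, as in the quadratic proof, and gives $\norm{h a_{L,\ell}}_{\A^0_{\R^2,t}}\leq C A^{1/3}$. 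Since $\abs{m_\ell}\leq1$, the cutoff $P_{\leq\ell}^{x,y}$ only decreases every $\operatorname{AF}^{\partial_x}_{S,3}$. Writing $B=ha_{L,\ell}$, $\operatorname{AF}^{\partial_x}_{S,3}(B)$ then splits into three pieces.
\begin{enumerate}[label=\textup{(\roman*)}]
 \item The degree-zero term $A\,\chi_L^3$. Its contribution is $2\pi A\int\abs\xi\langle k\rangle^{-S}(\widehat{\chi_L}^{*3})(k)\,dk\leq C_\chi A/L$ by \eqref{eq:cubic-spatial-cutoff-moment}, because the weight is Lipschitz and vanishes at $\xi=0$.
 \item The three terms linear in $E$. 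Each carries coefficient $A^{1/3}\cdot A^{1/3}\cdot\frac{1}{3A^{2/3}}$, so the total is exactly one. Their majorant is $(\widehat{\chi_L}^{*3})*F$ with $F=\norm{\widehat E(\cdot,k)}_{C_t}$, and \eqref{eq:cubic-positive-approximation} bounds it by $\norm{\partial_xE}_{\A^{-S}}+C/L$.
 \item All remaining terms. Using boundedness of $\abs\xi\langle k\rangle^{-S}$ and $\norm{\chi_LE^j}_{\A^0}\leq R^j$, they are $O(A\cdot(R/A)^2)=O(A^{-1})$.
\end{enumerate}

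Next pass from $B$ to $w=B\rho$, with $\rho$ having nonnegative coefficients on $(\mu\Z)^2$. Because $\mu\geq16\ell$, the translates $q+\supp\widehat B$ for $q\in\supp\widehat\rho$ are disjoint. Expand over triples $(q_1,q_2,q_3)$ and group by $s=q_1+q_2+q_3$. For $s=0$ the coefficient is $\widehat{\rho^3}(0)=1$, which gives exactly $\operatorname{AF}^{\partial_x}_{S,3}(B)$. For $s\neq0$ the three amplitude frequencies total at most $6\ell\leq\abs s/2$, so the weight is $\lesssim\abs s^{1-S}$. With $\widehat{\rho^3}(s)\leq C_\eps$ from \eqref{eq:cubic-profile-cubic-coefficient-bound}, the sum over $s\in(\mu\Z)^2\setminus\{0\}$ of $\abs s^{1-S}$ is $C\mu^{1-S}$, which is finite since $S>3$ in two dimensions. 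This gives a total of $C A\mu^{1-S}$.

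For $\operatorname{AF}^{\partial_x}_{S,3}(u,u,w)$: with $\mu>8R$, each profile frequency $q\neq0$ remains dominant, so the weight is $\lesssim\abs q^{1-S}$. This gives the bound
\[
 C\norm u_{\A^0}^2\norm B_{\A^0}\sum_{q}\widehat\rho(q)\abs q^{1-S}\leq CA^{1/3}\la^{-4(1-\eps)/3}\mu^{1-S},
\]
using the pointwise coefficient bound \eqref{eq:cubic-profile-coefficient-bound}. For $\operatorname{AF}^{\partial_x}_{S,3}(u,w,w)$, group by $s=q_1+q_2$ and use $\widehat{\rho^2}(s)\leq\norm\rho_{L^2}^2\leq C\la^{-2(1-\eps)/3}$. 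For $s=0$ the weight is bounded by a constant because $S\geq1$. For $s\neq0$ the weight is $\lesssim\abs s^{1-S}$, and the sum of these is finite. Together this gives $CA^{2/3}\la^{-2(1-\eps)/3}$.

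The main obstacle is the exact linear coefficient in step (ii). One must check that for each of the three cross terms the majorant is precisely the triple convolution $\widehat{\chi_L}^{*3}*F$, using Minkowski and Tonelli with $\widehat{\chi_L}\geq0$ and $h^3E=E$ together with $0\leq h\leq1$. This is what makes the constant in front of $\norm{\partial_xE}_{\A^{-S}}$ equal to one and not three.
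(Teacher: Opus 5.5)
Your proposal is correct and follows the paper's proof essentially step for step. The shared steps are:
\begin{itemize}
\item the uniformly convergent cube-root expansion with cutoff contraction;
\item the split of $\operatorname{AF}^{\partial_x}_{S,3}(ha_{L,\ell})$ into the $A\chi_L^3$ term, the three linear terms of total coefficient one controlled by \eqref{eq:cubic-positive-approximation}, and the $O(A^{-1})$ remainder;
\item the exact disjoint-translate expansion grouped by $s=q_1+q_2+q_3$ with $\widehat{\rho^3}(0)=1$;
\item the frequency-dominance bounds for $(u,u,w)$ and $(u,w,w)$, via \eqref{eq:cubic-profile-coefficient-bound} and $\widehat{\rho^2}(s)\leq\norm{\rho}_{L^2}^2$.
\end{itemize}
The only blemish is notational: you use $R$ for $\norm{E}_{\A^0_{\R^2,t}}$, although the statement already reserves $R$ for the Fourier radius of $u$.
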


\begin{proof}
The binomial series converges in $\A^M_{\R^2,t}$, uniformly for $L\geq1$.
Since $hE^j=E^j$ for every $j\geq1$, it yields
\begin{equation}\label{eq:cubic-Euclidean-amplitude-series}
 h a_L=A^{1/3}h\chi_L-\frac1{3A^{2/3}}\chi_LE
 +\sum_{j\geq2}\binom{1/3}{j}(-1)^j
       A^{1/3-j}\chi_LE^j.
\end{equation}
The multiplier of $P_{\leq\ell}^{x,y}$ has absolute value at most one, so
\[
 \operatorname{AF}^{\partial_x}_{S,3}(h a_{L,\ell})
 \leq \operatorname{AF}^{\partial_x}_{S,3}(h a_L).
\]

The term with no factor of $E$ in the expansion of
$\operatorname{AF}^{\partial_x}_{S,3}(h a_L)$ is
\[
 A\int_{\R^2}2\pi\abs\xi\langle k\rangle^{-S}
 (\widehat{\chi_L}*\widehat{\chi_L}*\widehat{\chi_L})(k)\,dk
 \leq\frac{C_\chi A}{L}
\]
by \eqref{eq:cubic-spatial-cutoff-moment}.  The three terms containing
exactly one copy of $E$ have total absolute coefficient one.
$\widehat\chi_L\geq0$, Minkowski's inequality, and Tonelli's
theorem bound their sum by the convolution of
$\norm{\widehat E(\cdot,k)}_{C_t}$ with
$\widehat{\chi_L}*\widehat{\chi_L}*\widehat{\chi_L}$.  Thus
\eqref{eq:cubic-positive-approximation} bounds these three terms by
\[
 \norm{\partial_xE}_{\A^{-S}_{\R^2,t}}
 +\frac{C_{\chi,S}}L\norm E_{\A^0_{\R^2,t}}.
\]
Every remaining term has degree at least two in $E$.  Since
$2\pi\abs\xi\langle k\rangle^{-S}$ is bounded for $S\geq1$, their
sum is at most
\[
 C_SA\sum_{i+j+k\geq2}
 \abs{\binom{1/3}{i}\binom{1/3}{j}\binom{1/3}{k}}
 \left(\frac{\norm E_{\A^0_{\R^2,t}}}{A}\right)^{i+j+k}
 \leq\frac{C_{E,S}}A.
\]
Combining these estimates gives
\begin{equation}\label{eq:cubic-Euclidean-amplitude-AF}
 \operatorname{AF}^{\partial_x}_{S,3}(h a_{L,\ell})
 \leq\norm{\partial_xE}_{\A^{-S}_{\R^2,t}}
 +C_{E,S,M,\chi}\left(\frac AL+\frac1L+\frac1A\right).
\end{equation}

The shifted copies of $\widehat{h a_{L,\ell}}$ are disjoint, so the
expansion over $q_1,q_2,q_3\in\supp\widehat\rho$ is exact.
The terms with $q_1+q_2+q_3=0$ equal
\eqref{eq:cubic-Euclidean-amplitude-AF} with coefficient
\[
 \widehat{\rho^3}(0)=1.
\]
For $s\in(\mu\Z)^2\setminus\{0\}$, the coefficient
$\widehat{\rho^3}(s)$ is bounded by
\eqref{eq:cubic-profile-cubic-coefficient-bound}.  If
$\zeta_i\in\supp\widehat{h a_{L,\ell}}$, then
$\abs{\zeta_1+\zeta_2+\zeta_3}\leq6\ell\leq3\abs s/8$.
Hence the terms with $q_1+q_2+q_3\neq0$ are at most
\begin{align*}
 C_{E,S}A\sum_{s\in(\mu\Z)^2\setminus\{0\}}\abs s^{1-S}
 &\leq C_{E,S}A\mu^{1-S}
 \sum_{j\in\Z^2\setminus\{0\}}\abs j^{1-S}\\
 &\leq C_{E,S}A\mu^{1-S}.
\end{align*}

For $\operatorname{AF}^{\partial_x}_{S,3}(u,u,w)$, write a frequency in
$\supp\widehat w$ as $q+\zeta$, where
$q\in\supp\widehat\rho$ and
$\abs\zeta\leq2\ell$.  The sum of $\zeta$ and two frequencies from
$\supp\widehat u$ has magnitude at most $3\abs q/8$, so the factor
$2\pi\abs\xi\langle k\rangle^{-S}$ at the output frequency is at most
$C_S\abs q^{1-S}$.  Consequently,
\begin{align*}
 \operatorname{AF}^{\partial_x}_{S,3}(u,u,w)
 &\leq C_S\norm u_{\A^0_{\R^2,t}}^2
       \norm{h a_{L,\ell}}_{\A^0_{\R^2,t}}
       \sum_{q\neq0}\widehat\rho(q)\abs q^{1-S}\\
 &\leq C_{u,E,S,M,\chi,\eps}A^{1/3}
       \la^{-4(1-\eps)/3}\mu^{1-S},
\end{align*}
where the last line follows from
\eqref{eq:cubic-profile-coefficient-bound} and $S>3$.

For $\operatorname{AF}^{\partial_x}_{S,3}(u,w,w)$, group the terms by
$s=q_1+q_2$.  Cauchy--Schwarz and
\eqref{eq:cubic-profile-endpoints} give, uniformly in $s$,
\begin{equation*}
 0\leq\widehat{\rho^2}(s)
 \leq\norm{\rho}_{L^2(\T^2)}^2
 \leq C_\eps\la^{-2(1-\eps)/3}.
\end{equation*}
For $s=0$, $2\pi\abs\xi\langle k\rangle^{-S}\leq C_S$.  If $s\neq0$,
the frequency from $\supp\widehat u$ and the two frequencies in
$\supp\widehat{h a_{L,\ell}}$ have total magnitude at most
$3\abs s/8$.  Thus
$2\pi\abs\xi\langle k\rangle^{-S}\leq C_S\abs s^{1-S}$.
Summing over $(\mu\Z)^2\setminus\{0\}$ gives
\begin{align*}
 \operatorname{AF}^{\partial_x}_{S,3}(u,w,w)
 &\leq C_S\norm u_{\A^0_{\R^2,t}}
       \norm{h a_{L,\ell}}_{\A^0_{\R^2,t}}^2
       \la^{-2(1-\eps)/3}(1+C_S\mu^{1-S})\\
 &\leq C_{u,E,S,M,\chi,\eps}A^{2/3}
       \la^{-2(1-\eps)/3}.
\end{align*}
\end{proof}

\subsection{Cubic iteration steps}

\begin{proposition}
\label{prop:cubic-periodic-onestep}
Fix $d\in\{3,5\}$, $\kappa\in\{-1,1\}$,
$0<\eps<1$, and
\[
 0<\beta<\frac43(1-\eps).
\]
Let $\rho$ be either $\rho_{3,\la,\eps}$ or
$\widetilde{\rho}_{3,\la,\eps}$ from
Lemma~\ref{lem:cubic-profile}.
Let $S>d+1$.  Suppose that $u,E\in C^\infty([0,1]\times\T^2)$
are real, have finite Fourier support and zero $x$-mean, and solve
\eqref{eq:relaxed-general} with $r=3$.
Let $I\Subset(0,1)$ be a closed interval containing their time supports.
Given $\eta,\delta_+>0$, finitely many $1\leq p_j<3$, and finitely
many $\alpha_j\geq0$ satisfying
\begin{equation}\label{eq:cubic-periodic-alpha-condition}
 \begin{aligned}
 \alpha_j&<\frac13(1-\eps)
 &&\text{for the isotropic profile},\\
 \alpha_j&<\frac12(1-\eps)
 &&\text{for the parabolic profile},
 \end{aligned}
\end{equation}
there is $A$ such that every sufficiently large admissible $\la$ admits
a cutoff $h$ equal to one on $I$.  Set
\[
 b=P_{\leq\mu^{1/2}}^{x,y}(A-E)^{1/3},
 \qquad w=hb\rho,
 \qquad u^+=u+w.
\]
Define $E^+$ by
\eqref{eq:periodic-update-general} with $r=3$.  Then
\begin{equation}\label{eq:cubic-periodic-onestep-regularity}
 \begin{aligned}
 \norm w_{C_tL^{p_j}}+\norm w_{C_tH^{\alpha_j}}&<\eta
 &&\text{for the isotropic profile},\\
 \norm w_{C_tL^{p_j}}+\norm w_{C_tH^{\alpha_j,0}}&<\eta
 &&\text{for the parabolic profile}.
 \end{aligned}
\end{equation}
\begin{align}
 \norm{E^+}_{\A^{-S}_{x\neq0,t}}&<\delta_+,
 \label{eq:cubic-periodic-onestep-error}\\
 \operatorname{AF}_{S,3}^x(u^+)
 &\leq\operatorname{AF}_{S,3}^x(u)
   +\norm E_{\A^{-S}_{x\neq0,t}}+\eta.
 \label{eq:cubic-periodic-onestep-AF}
\end{align}
The functions $u^+$ and $E^+$ are smooth and real, have finite spatial
Fourier support and zero $x$-mean, and satisfy
\eqref{eq:relaxed-general}.
$\supp\widehat w$ is disjoint from $\supp\widehat u$ and is contained in
\begin{equation}\label{eq:cubic-periodic-support}
 \begin{aligned}
 3\la\leq\abs n\leq18\la,\quad \abs m\leq2\la
 &\qquad\text{for the isotropic profile},\\
 3\la\leq\abs n\leq18\la,\quad \abs m\leq2\la^2
 &\qquad\text{for the parabolic profile}.
 \end{aligned}
\end{equation}
The time supports of
$u^+$ and $E^+$ lie in the open $\la^{-\beta}$-neighborhood of $I$.
\end{proposition}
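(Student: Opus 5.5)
The proof will follow the quadratic one-step Proposition~\ref{prop:onestep}. The constant $A$ is fixed first, and then $\la$ is sent to infinity through admissible values. Let $K$ be the frequency bound of $\supp\widehat u\cup\supp\widehat E$. Choose $A\geq1+2\norm E_{C_tL^\infty}$ large enough that $\norm E_{\A^0_t}/A\leq1/2$, the cube-root binomial series converges in $\A^N_t$, and the constant $C_{E,S}A^{-1}$ of Proposition~\ref{prop:cubic-periodic-estimate} is below $\eta/3$.

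For each large admissible $\la$, impose $\mu^{1/2}>16K$ and $\mu\geq256$, take $h$ from \eqref{eq:time-cutoff-lemma} with $\tau=\la^{-\beta}$, and note that $hE=E$. The identity $u^+_t+\dots=\partial_xE^+$ is the algebraic update \eqref{eq:periodic-update-general}. Smoothness, reality, finite support and zero $x$-mean are then immediate; realness of $\rho$ holds because its coefficients are real and symmetric. Since $\supp\widehat b\subset\{\langle k\rangle\leq2\mu^{1/2}\}$ and the profile frequencies satisfy \eqref{eq:cubic-profile-support} with $\mu J\simeq\la$, the support of $\widehat w$ lies in \eqref{eq:cubic-periodic-support} and misses $\supp\widehat u$.

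For the regularity bounds \eqref{eq:cubic-periodic-onestep-regularity}, the estimate $\norm w_{L^p}\leq\norm b_{L^\infty}\norm\rho_{L^p}$ together with \eqref{eq:cubic-profile-Lp} gives decay for $p<3$. Since $\widehat w$ is frequency-localized at $|k|\lesssim\la$ in the isotropic case, and at $|n|\lesssim\la$ for the $H^{s,0}$ weight in the parabolic case, we get
\[
\norm w_{H^{\alpha}}\lesssim\la^{\alpha}\norm w_{L^2}\lesssim\la^{\alpha-(1-\eps)/3},
\]
and analogously $\norm w_{H^{\alpha,0}}\lesssim\la^{\alpha-(1-\eps)/2}$. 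Both tend to zero under \eqref{eq:cubic-periodic-alpha-condition}.

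The AF bound \eqref{eq:cubic-periodic-onestep-AF} comes from expanding $\operatorname{AF}^x_{S,3}(u+w)$ trilinearly. The pieces are estimated by Proposition~\ref{prop:cubic-periodic-estimate}. The extra terms $A\mu^{-S}$, $A^{1/3}\la^{-4(1-\eps)/3}\mu^{-S}$ and $A^{2/3}\la^{-2(1-\eps)/3}$ all vanish as $\la\to\infty$ with $A$ fixed.

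For the error \eqref{eq:cubic-periodic-onestep-error}, split $E^+=E_O+E_N+E_D+E_T$ with
\[
E_O=\mathbb P_{x\neq0}(E+w^3),\qquad E_N=\mathbb P_{x\neq0}(3u^2w+3uw^2),
\]
and $E_D,E_T$ as in \eqref{eq:RD}--\eqref{eq:RT}. The dispersion term $E_D$ and the temporal term $E_T$ are handled by Lemma~\ref{lem:L1wiener} with the symbol bound $\langle k\rangle^{d-1}$ (this needs $S>d+1$). With $\norm w_{C_tL^1}\lesssim\la^{-2(1-\eps)/3}$, the temporal error is $(1+\la^\beta)\la^{-2(1-\eps)/3}$.

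Here is a point to check: the restriction $\beta<\frac43(1-\eps)$ suggests sharper decay. One can use instead the $\ell^1$ bound $\norm{w}_{\A^0}\lesssim A^{1/3}\sum_k\widehat\rho(k)$. A cleaner route is Lemma~\ref{lem:L1wiener} applied with the $L^1$ norm, and if that is insufficient, use the weighted sum $\sum_q\widehat\rho(q)\langle q\rangle^{-S}\lesssim\la^{-4(1-\eps)/3}\la^{-S}\cdot\#$ in place of $L^1$. I would take the latter: the coefficient bound \eqref{eq:cubic-profile-coefficient-bound} times the lattice sum gives $\la^{-4(1-\eps)/3}$ decay up to harmless factors, which beats $\la^\beta$.

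The Nash term $3u^2w$ is frequency-separated and gives $\la^{-4(1-\eps)/3}\mu^{-S}$. The term $3uw^2$ is bounded via $\widehat{\rho^2}(s)\leq\norm\rho_{L^2}^2$, exactly as in the AF estimate.

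The main step is the oscillation error. Since $hE=E$,
\[
E+w^3=h^3(b^3-a^3)+h^3b^3(\rho^3-1),\qquad a^3=A-E.
\]
The first term is $O(\mu^{-N/2})$ by the analogue of Lemma~\ref{lem:amplitude-cutoff} for cube roots, obtained from the Wiener algebra. For the second, $\rho^3-1$ has Fourier support in $(\mu\Z)^2\setminus\{0\}$ with coefficients bounded by \eqref{eq:cubic-profile-cubic-coefficient-bound}, while $b^3$ has frequencies $\leq6\mu^{1/2}\leq|s|/2$. Hence its $\A^{-S}$ norm is at most $C_{E,A}\sum_{s\neq0}|s|^{-S}\lesssim\mu^{-S}$, with $S>2$. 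The delicate bookkeeping is the zero-$x$-frequency projection: modes with $s_x=0$, $s_y\neq0$ are removed by $\mathbb P_{x\neq0}$ only when $b^3$ has zero $x$-frequency, and otherwise they are still at distance $\gtrsim|s|$. Combining all four pieces and choosing $\la$ large gives $\norm{E^+}<\delta_+$.
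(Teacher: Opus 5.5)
Your proposal is essentially the paper's proof: $A$ is fixed before $\la$, $E^+$ is split into oscillation, Nash, dispersion and temporal parts, the oscillation term uses $\mathbb P_{x\neq0}(E+w^3)=\mathbb P_{x\neq0}\bigl(h^3(b^3-a^3)+h^3b^3(\rho^3-1)\bigr)$, and the Nash and $\operatorname{AF}$ bounds come from Proposition~\ref{prop:cubic-periodic-estimate}; note that your identity needs the projection, since without it the constant $h^3A$ is left over. The one slip is in $\norm w_{C_tL^1}$: the $p=1$ case of \eqref{eq:cubic-profile-Lp} gives $\la^{-4(1-\eps)/3}$ (the exponent you wrote is the bound for $\norm\rho_{L^2}^2$), so Lemma~\ref{lem:L1wiener} directly yields the temporal bound $(1+\la^\beta)\la^{-4(1-\eps)/3}$ --- exactly the paper's route and the source of $\beta<\frac43(1-\eps)$ --- and your coefficient-times-lattice-sum fallback, while also valid, is not needed.
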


\begin{proof}
Fix an integer $N\geq1$.  Choose
$A\geq1+2\norm E_{C_tL^\infty}$ so that the binomial series converges
absolutely in $\A_t^N$ and $C_{E,S}A^{-1}<\eta/4$.  For a sufficiently
large admissible $\la$, choose $h$ satisfying
\eqref{eq:time-cutoff-lemma} with $\tau=\la^{-\beta}$.

\subsubsection*{Oscillation error}

The oscillation error is $\mathbb P_{x\neq0}(E+w^3)$.  Let
$a=(A-E)^{1/3}$.  Since $h^3E=E$ and $a^3=A-E$,
\begin{equation*}
 \mathbb P_{x\neq0}(E+w^3)
 =\mathbb P_{x\neq0}\left(
 h^3(b^3-a^3)+h^3b^3(\rho^3-1)\right).
\end{equation*}
Rapid Fourier decay of $a$ and the Wiener algebra inequality give
\begin{align*}
 \norm{b-a}_{\A^0_t}
 &\leq \mu^{-N/2}\norm a_{\A^N_t},\\
 \norm{b^3-a^3}_{\A^0_t}
 &\leq C\norm{b-a}_{\A^0_t}
   \bigl(\norm a_{\A^0_t}+\norm b_{\A^0_t}\bigr)^2
 \leq C_{E,A,N}\mu^{-N/2}.
\end{align*}
The nonzero coefficients of $\rho^3-1$ lie in
$(\mu\Z)^2\setminus\{0\}$ and are bounded by
\eqref{eq:cubic-profile-cubic-coefficient-bound}.  Since the Fourier support of
$b^3$ lies in $\{\abs n,\abs m\leq6\mu^{1/2}\}$,
\begin{equation*}
 \norm{\mathbb P_{x\neq0}(E+w^3)}_{\A^{-S}_{x\neq0,t}}
 \leq C_{E,A,N}\mu^{-N/2}+C_{E,A,S}\mu^{-S}.
\end{equation*}

\subsubsection*{Nash error}

The Nash error is $\mathbb P_{x\neq0}(3u^2w+3uw^2)$.
Minkowski's inequality and \eqref{eq:periodic-AF}
give
\begin{align*}
 \norm{\mathbb P_{x\neq0}(3u^2w)}_{\A^{-S}_{x\neq0,t}}
 &\leq3\operatorname{AF}_{S,3}^x(u,u,w),\\
 \norm{\mathbb P_{x\neq0}(3uw^2)}_{\A^{-S}_{x\neq0,t}}
 &\leq3\operatorname{AF}_{S,3}^x(u,w,w).
\end{align*}

\subsubsection*{Dispersion error}

The dispersion error is
\[
 (-1)^{(d+1)/2}\partial_x^{d-1}w
 +\kappa\partial_x^{-2}\partial_y^2w.
\]
Its multiplier is bounded by
$C\langle(n,m)\rangle^{d-1}$.  Therefore
\begin{align*}
 \norm{(-1)^{(d+1)/2}\partial_x^{d-1}w+
  \kappa\partial_x^{-2}\partial_y^2w}
       _{\A^{-S}_{x\neq0,t}}
 &\leq C\norm w_{C_tL^1}
 \sum_{\substack{n\neq0\\m\in\Z}}
   \langle(n,m)\rangle^{d-1-S}\\
 &\leq C_{S}\norm w_{C_tL^1}.
\end{align*}
Equation~\eqref{eq:cubic-profile-Lp} gives
\[
 \norm w_{C_tL^1}
 \leq C_{E,A,\eps}
 \la^{-4(1-\eps)/3}.
\]
\subsubsection*{Temporal error}

The temporal error is $\partial_t\partial_x^{-1}w$.  Using
\eqref{eq:multiplierL1}, \eqref{eq:time-cutoff-lemma}, and
$\partial_tw=h'b\rho+h\partial_tb\,\rho$ gives
\begin{equation*}
 \norm{\partial_t\partial_x^{-1}w}_{\A^{-S}_{x\neq0,t}}
 \leq C_{E,A,S,\eps}(1+\la^\beta)
 \la^{-4(1-\eps)/3}\longrightarrow0.
\end{equation*}

For every sufficiently large admissible $\la$, choose the corresponding
cutoff $h$.  The four estimates and \eqref{eq:periodic-update-general}
then prove \eqref{eq:cubic-periodic-onestep-error}.  Expanding
$\operatorname{AF}_{S,3}^x(u+w)$ and applying the triangle inequality
together with Proposition~\ref{prop:cubic-periodic-estimate} gives
\eqref{eq:cubic-periodic-onestep-AF}.

The profile estimate \eqref{eq:cubic-profile-Lp} gives
\begin{equation}\label{eq:cubic-periodic-onestep-Lp}
 \norm w_{C_tL^p}
 \leq C_{E,A,p,\eps}
 \la^{2(1-\eps)(1/3-1/p)},
 \qquad1\leq p<3.
\end{equation}
Furthermore, \eqref{eq:cubic-profile-support}, the amplitude cutoff,
Plancherel's theorem, and \eqref{eq:cubic-profile-endpoints} give
\begin{equation}\label{eq:cubic-periodic-onestep-Halpha}
 \begin{aligned}
 \norm w_{C_tH^\alpha}
 &\leq C_{E,A,\alpha,\eps}
 \la^{\alpha-(1-\eps)/3}
 &&\text{for the isotropic profile},\\
 \norm w_{C_tH^{\alpha,0}}
 &\leq C_{E,A,\alpha,\eps}
 \la^{\alpha-(1-\eps)/2}
 &&\text{for the parabolic profile}.
 \end{aligned}
\end{equation}
Equations \eqref{eq:cubic-periodic-onestep-Halpha} and
\eqref{eq:cubic-periodic-onestep-Lp}, together with
\eqref{eq:cubic-periodic-alpha-condition}, prove
\eqref{eq:cubic-periodic-onestep-regularity}.

Equation~\eqref{eq:cubic-profile-support} and $\supp\widehat b\subset
\{\abs n,\abs m\leq2\mu^{1/2}\}$ give
\eqref{eq:cubic-periodic-support}.  For sufficiently large $\la$, this
support is disjoint from $\supp\widehat u$.
\end{proof}

\begin{proposition}
\label{prop:cubic-Euclidean-onestep}
Fix $d\in\{3,5\}$, $\kappa\in\{-1,1\}$,
$0<\eps<1$,
\[
 0<\beta<\frac43(1-\eps),
\]
and $S>d+1$.  Fix $M>S+6$.
Let $\rho$ be either $\rho_{3,\la,\eps}$ or
$\widetilde{\rho}_{3,\la,\eps}$ from
Lemma~\ref{lem:cubic-profile}.
Suppose $u,E\in C^\infty([0,1];\mathcal S(\R^2))$ are real, have
compact spatial Fourier support, and satisfy
\eqref{eq:relaxed-general} with $r=3$.
Assume, for some $c_0>0$, that
\[
 \widehat u(t,\xi,\eta)=0\quad\text{for }\abs\xi<c_0
\]
and that the time supports of $u$ and $E$ are contained in a closed
interval $I\Subset(0,1)$.  Given $\delta_w,\delta_+>0$, finitely many
$1\leq p_j<3$, and finitely many $\alpha_j\geq0$ satisfying
\[
 \begin{aligned}
 \alpha_j&<\frac13(1-\eps)
 &&\text{for the isotropic profile},\\
 \alpha_j&<\frac12(1-\eps)
 &&\text{for the parabolic profile},
 \end{aligned}
\]
one can choose $A,L,\ell$ so that every sufficiently large admissible
$\la$ admits a cutoff $h=1$ on $I$, with the parameters selected in the
order
\[
 A\longrightarrow L\longrightarrow\ell\longrightarrow\la.
\]
The parameter $A$ satisfies
\eqref{eq:cubic-Euclidean-A}, and $a_L,a_{L,\ell}$ are then defined by
\eqref{eq:cubic-Euclidean-amplitude}.  With
\[
 w=h a_{L,\ell}\rho,\qquad u^+=u+w,
\]
define $E^+$ by \eqref{eq:Euclidean-update-general} with $r=3$.  Then
\begin{equation}\label{eq:cubic-Euclidean-onestep-regularity}
 \begin{aligned}
 \norm w_{C_tL^{p_j}}+\norm w_{C_tH^{\alpha_j}}&<\delta_w
 &&\text{for the isotropic profile},\\
 \norm w_{C_tL^{p_j}}+\norm w_{C_tH^{\alpha_j,0}}&<\delta_w
 &&\text{for the parabolic profile}.
 \end{aligned}
\end{equation}
\begin{align}
 \norm{\partial_xE^+}_{\A^{-S}_{\R^2,t}}&<\delta_+,
 \notag\\
 \operatorname{AF}^{\partial_x}_{S,3}(u^+)
 &\leq\operatorname{AF}^{\partial_x}_{S,3}(u)
 +\norm{\partial_xE}_{\A^{-S}_{\R^2,t}}+\delta_w.
 \label{eq:cubic-Euclidean-onestep-AF}
\end{align}
Both $u^+$ and $E^+$ are smooth, real, and Schwartz in space, have compact
spatial Fourier support, and satisfy \eqref{eq:relaxed-general}.
Their time supports lie in the open $\la^{-\beta}$-neighborhood of $I$.

The parameters may be chosen so that
\begin{equation}\label{eq:cubic-Euclidean-onestep-separation}
 \mu\geq16\ell,
 \qquad
 \frac\mu2>
 4\max\left(\{1,c_0\}\cup
 \{\abs{(\xi,\eta)}:(\xi,\eta)\in\supp\widehat u\}\right),
\end{equation}
and, on $\supp\widehat w$,
\begin{equation}\label{eq:cubic-Euclidean-onestep-support}
 \begin{aligned}
 3\la\leq\abs\xi\leq18\la,\quad \abs\eta\leq2\la
 &\qquad\text{for the isotropic profile},\\
 3\la\leq\abs\xi\leq18\la,\quad \abs\eta\leq2\la^2
 &\qquad\text{for the parabolic profile}.
 \end{aligned}
\end{equation}
Thus $\supp\widehat u\cap\supp\widehat w=\varnothing$ and
\[
 \widehat{u^+}(t,\xi,\eta)=0\qquad(\abs\xi<c_0).
\]
\end{proposition}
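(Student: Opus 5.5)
The plan follows the proof of Proposition~\ref{prop:Euclidean-onestep}, with the quadratic estimates replaced by Proposition~\ref{prop:cubic-Euclidean-estimate} and the slab profile replaced by $\rho$ from Lemma~\ref{lem:cubic-profile}, extended periodically to $\R^2$. The parameters are chosen in the stated order. First fix $A$ satisfying \eqref{eq:cubic-Euclidean-A} and $C_{E,S,M,\chi}/A<\delta_w/8$. Keeping $A$ fixed, take $L$ so large that the $A/L$ and $1/L$ terms, as well as the localization errors $(1-\chi_L^3)E$ and $A\chi_L^3$, are below $\delta_+/8$; this uses \eqref{eq:cubic-spatial-cutoff-moment} in the same way the proof of Lemma~\ref{lem:Euclidean-positive-approximation} used the second moment bound. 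Next take $\ell$ so that $\norm{a_L-a_{L,\ell}}_{\A^0}\lesssim A^{1/3}\ell^{-M}$ makes the term $h^3(a_{L,\ell}^3-a_L^3)$ small. Finally take $\la$ large and admissible, with \eqref{eq:cubic-Euclidean-onestep-separation} holding and $\la^{-\beta}<\operatorname{dist}(I,\{0,1\})$, and choose $h$ from \eqref{eq:time-cutoff-lemma} with $\tau=\la^{-\beta}$.

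\textbf{Support.} The amplitude has frequencies $\abs\zeta\leq2\ell\leq\mu/8$. By \eqref{eq:cubic-profile-support}, \eqref{eq:cubic-profile-support-bounds} and $\mu J\simeq\la$, this gives \eqref{eq:cubic-Euclidean-onestep-support}, the gap $\abs\xi\geq3\la>c_0$, and disjointness from $\supp\widehat u$.

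\textbf{Error decomposition.} Using $h^3E=E$ and $(A-E)\chi_L^3=a_L^3$, write
\[
 E+w^3=h^3\bigl((1-\chi_L^3)E+A\chi_L^3+(a_{L,\ell}^3-a_L^3)+a_{L,\ell}^3(\rho^3-1)\bigr).
\]
The last term has profile frequencies $s\in(\mu\Z)^2\setminus\{0\}$ with coefficients bounded by \eqref{eq:cubic-profile-cubic-coefficient-bound}, so it contributes $C A\mu^{1-S}$. The Nash terms $\partial_x(3u^2w+3uw^2)$ are bounded by $3\operatorname{AF}^{\partial_x}_{S,3}(u,u,w)+3\operatorname{AF}^{\partial_x}_{S,3}(u,w,w)$ via Minkowski, and both tend to zero in $\la$ by Proposition~\ref{prop:cubic-Euclidean-estimate}.

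\textbf{Dispersion and temporal errors.} For the dispersion error I would argue as in \eqref{eq:Euclidean-onestep-dispersion}, replacing the lattice sum by one over disks of radius $2\ell$ around $q\in(\mu\Z)^2$. Two symbol bounds are needed.
\begin{itemize}
\item On the isotropic support the symbol satisfies $\eta^2/\abs\xi\lesssim\la$.
\item On the parabolic support only $\eta^2/\abs\xi\lesssim\la^3$ holds. Instead I would bound the symbol of $\partial_x^{d}+\partial_x^{-1}\partial_y^2$ by $\la^{d}+\la^{3}$ times $\langle k\rangle^{-S}\lesssim\la^{-S}$.
\end{itemize}
Then I use $\abs{\widehat w}\leq\norm{w}_{L^1}$ on a set whose measure is at most $C\ell^2$ times the number of lattice points, which is $\lesssim\la^{2}$ or $\la^{3}$. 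Since $\norm{w}_{C_tL^1}\lesssim\la^{-4(1-\eps)/3}$ (resp. $\la^{-2(1-\eps)}$) by \eqref{eq:cubic-profile-Lp} and \eqref{eq:Euclidean-periodic-decoupling}, it must be checked that the product decays. The exponent is $d+1-S$ plus dimensional gains, which is crude. I expect this bookkeeping to be the \emph{main obstacle}.

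If the crude count fails, I would instead follow the introduction's count. Use $\sum_k\widehat\rho(k)$ from \eqref{eq:cubic-profile-Fourier-mass} times $\norm{ha_{L,\ell}}_{\A^0}$ to bound $\int\abs{\widehat w}$, getting $O(\la^{d+1-S-\eps})\to0$. The temporal error $\partial_tw$ is bounded the same way with the factor $(1+\la^\beta)$. In $L^1$ this gives $\la^{\beta-4(1-\eps)/3}\to0$ by the hypothesis on $\beta$, once $\int\langle k\rangle^{-S}dk<\infty$ and the $L^1\to L^\infty$ Fourier bound are used.

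\textbf{Regularity and the AF bound.} The $L^p$ bound comes from \eqref{eq:Euclidean-periodic-decoupling} with \eqref{eq:cubic-profile-Lp}. The $H^{\alpha}$ and $H^{\alpha,0}$ bounds come from Plancherel and the frequency bounds, with $\norm{w}_{L^2}\lesssim\norm{ha_{L,\ell}}_{\A^0}\norm\rho_{L^2}$ since the translates are disjoint. These give $\la^{\alpha-(1-\eps)/3}$ and $\la^{\alpha-(1-\eps)/2}$, so \eqref{eq:cubic-Euclidean-onestep-regularity} follows for large $\la$. Finally, expand $\operatorname{AF}^{\partial_x}_{S,3}(u+w)$ trilinearly. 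The estimates of Proposition~\ref{prop:cubic-Euclidean-estimate} with the above choices of $A,L,\ell$ then yield \eqref{eq:cubic-Euclidean-onestep-AF}.
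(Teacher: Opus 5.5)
Your proposal is correct and follows the paper's proof essentially step by step. It uses the same parameter order $A\to L\to\ell\to\la$ and the same four-term splitting of $E+w^3$. It bounds the Nash terms by the trilinear estimates of Proposition~\ref{prop:cubic-Euclidean-estimate} and the temporal error in $L^1$ via decoupling with $\beta<\frac43(1-\eps)$. Your fallback for the dispersion error, the Fourier mass \eqref{eq:cubic-profile-Fourier-mass}, is exactly the paper's argument. Your cruder symbol bound $\la^d+\la^3\lesssim\la^d$ suffices since $d\geq3$; the paper uses the sharper $\la^{1-S}$ for the transverse weight.

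Commit to that Fourier-mass route. Your first count, with $\la^3$ lattice points in the parabolic case, gives the exponent $d-S+1+2\eps$, which need not be negative when $S$ is close to $d+1$. That count only closes if you use the exact number $\simeq J^3\simeq\la^{3(1-\eps)}$ of points in $\supp\widehat\rho$, and then it reproduces the same $\la^{d+1-S-\eps}$ bound.
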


\begin{proof}
Choose $A\geq1$ satisfying \eqref{eq:cubic-Euclidean-A} and
$C_{E,S,M,\chi}A^{-1}<\delta_w/8$.  With $A$ fixed, choose $L$
so large that
\[
 C_{E,S,M,\chi}\left(\frac AL+\frac1L\right)<\frac{\delta_w}8,
 \qquad \frac{C_{E,\chi}+C_\chi A}{L}<\frac{\delta_+}{8}.
\]
After $A$ and $L$ have been fixed, choose $\ell$ sufficiently large
that $C_{E,A,M,\chi}\ell^{-M}<\delta_+/8$.

Choose an admissible $\la$ satisfying
\eqref{eq:cubic-Euclidean-onestep-separation} and such that the
$\la^{-\beta}$-neighborhood of $I$ lies in
$(0,1)$.  Choose $h$ satisfying \eqref{eq:time-cutoff-lemma} with
$\tau=\la^{-\beta}$.
\subsubsection*{Oscillation error}

The oscillation error is $\partial_x(E+w^3)$.  Since $hE=E$,
\begin{equation*}
 E+w^3=h^3\left(
 (1-\chi_L^3)E+A\chi_L^3
 +(a_{L,\ell}^3-a_L^3)
 +a_{L,\ell}^3(\rho^3-1)\right).
\end{equation*}
Since the triple convolution of $\widehat{\chi_L}$ has integral one,
\begin{align*}
 \widehat{(1-\chi_L^3)E}(k)
 =\int_{\R^2}
 (\widehat{\chi_L}*\widehat{\chi_L}*\widehat{\chi_L})(z)
 \bigl(\widehat E(k)-\widehat E(k-z)\bigr)\,dz.
\end{align*}
The Schwartz regularity of $E$ gives
\[
 \int_{\R^2}2\pi\abs\xi\langle k\rangle^{-S}
 \norm{\widehat E(\cdot,k)-\widehat E(\cdot,k-z)}_{C_t}\,dk
 \leq C_E\abs z.
\]
Together with \eqref{eq:cubic-spatial-cutoff-moment}, this gives
\[
 \norm{\partial_x((1-\chi_L^3)E)}_{\A^{-S}_{\R^2,t}}
 \leq\frac{C_{E,\chi}}L,
 \qquad
 \norm{\partial_x(A\chi_L^3)}_{\A^{-S}_{\R^2,t}}
 \leq\frac{C_\chi A}L.
\]
The binomial series and \eqref{eq:cubic-Euclidean-A} give
$\sup_{L\geq1}\norm{a_L}_{\A^M_{\R^2,t}}<\infty$.  Since $1-m_\ell$
is supported where $\abs k\gtrsim\ell$,
\[
 \norm{a_L-a_{L,\ell}}_{\A^0_{\R^2,t}}
 \leq C_{E,A,M,\chi}\ell^{-M}.
\]
The Wiener algebra inequality consequently gives
\[
 \norm{\partial_x(a_{L,\ell}^3-a_L^3)}
       _{\A^{-S}_{\R^2,t}}
 \leq C_{E,A,M,\chi}\ell^{-M}.
\]
The nonzero coefficients of $\rho^3-1$ are bounded by
\eqref{eq:cubic-profile-cubic-coefficient-bound}.  Since
$\supp\widehat{a_{L,\ell}^3}\subset\{\abs k\leq6\ell\}$,
\begin{align*}
 \norm{\partial_x\left[
 a_{L,\ell}^3(\rho^3-1)\right]}
 _{\A^{-S}_{\R^2,t}}
 &\leq C_{E,A,S,\chi}
 \sum_{s\in(\mu\Z)^2\setminus\{0\}}\abs s^{1-S}\\
 &\leq C_{E,A,S,\chi}\mu^{1-S}.
\end{align*}
Thus
\begin{equation*}
 \norm{\partial_x(E+w^3)}_{\A^{-S}_{\R^2,t}}
 \leq\frac{C_{E,\chi}+C_\chi A}{L}
 +C_{E,A,M,\chi}\ell^{-M}
 +C_{E,A,S,\chi}\mu^{1-S}.
\end{equation*}

\subsubsection*{Nash error}

The Nash error is $\partial_x(3u^2w+3uw^2)$.  The trilinear definition
and Minkowski's inequality yield
\begin{align*}
 \norm{\partial_x(3u^2w)}_{\A^{-S}_{\R^2,t}}
 &\leq3\operatorname{AF}^{\partial_x}_{S,3}(u,u,w),\\
 \norm{\partial_x(3uw^2)}_{\A^{-S}_{\R^2,t}}
 &\leq3\operatorname{AF}^{\partial_x}_{S,3}(u,w,w).
\end{align*}

\subsubsection*{Dispersion error}

The dispersion error is
$\partial_x\bigl((-1)^{(d+1)/2}\partial_x^{d-1}w
+\kappa\partial_x^{-2}\partial_y^2w\bigr)$.
For $q\in\supp\widehat\rho$ and $\abs\zeta\leq2\ell$,
$\abs{q_x+\zeta_x}\simeq\la$, and
\[
 \abs{q_x+\zeta_x}^d\langle q+\zeta\rangle^{-S}
 \lesssim_{d,S}\la^{d-S},
 \qquad
 \frac{\abs{q_y+\zeta_y}^2}{\abs{q_x+\zeta_x}}
 \langle q+\zeta\rangle^{-S}
 \lesssim_S\la^{1-S}.
\]
Indeed, for $S>2$,
\[
 \sup_{y\in\R}\frac{y^2}{\la}(\la^2+y^2)^{-S/2}
 =\la^{1-S}\sup_{z\in\R}z^2(1+z^2)^{-S/2}
 \lesssim_S\la^{1-S}.
\]
Together with
\eqref{eq:cubic-profile-Fourier-mass}, these estimates give
\begin{equation*}
 \begin{aligned}
 &\norm{\partial_x\bigl((-1)^{(d+1)/2}\partial_x^{d-1}w+
 \kappa\partial_x^{-2}\partial_y^2w\bigr)}_{\A^{-S}_{\R^2,t}}\\
 &\qquad\leq C_{E,A,L,\ell,d,S,\eps}
 \la^{d-S}\sum_q\widehat\rho(q)
 \leq C_{E,A,L,\ell,d,S,\eps}\la^{d-S+1-\eps}
 \longrightarrow0.
 \end{aligned}
\end{equation*}

\subsubsection*{Temporal error}

The temporal error is $\partial_tw$.  Differentiating the amplitude gives
\[
 \partial_ta_L=-\frac{\chi_L\partial_tE}{3(A-E)^{2/3}}.
\]
Since
\[
 \partial_tw=h'a_{L,\ell}\rho
 +h(\partial_ta_{L,\ell})\rho,
\]
\eqref{eq:Euclidean-periodic-decoupling} with $p=1$ gives
\begin{equation*}
 \norm{\partial_tw}_{\A^{-S}_{\R^2,t}}
 \leq C_{E,A,L,\ell,S,\eps}
 (1+\la^\beta)
 \la^{-4(1-\eps)/3}\longrightarrow0.
\end{equation*}

With $\ell$ fixed, these four estimates and
\eqref{eq:Euclidean-update-general} give
$\norm{\partial_xE^+}_{\A^{-S}_{\R^2,t}}<\delta_+$ for every sufficiently
large admissible $\la$.
Expanding $\operatorname{AF}^{\partial_x}_{S,3}(u+w)$ and applying the
triangle inequality together with
Proposition~\ref{prop:cubic-Euclidean-estimate} gives
\eqref{eq:cubic-Euclidean-onestep-AF}.  Equations
\eqref{eq:Euclidean-periodic-decoupling} and
\eqref{eq:cubic-profile-Lp} give
\begin{equation}\label{eq:cubic-Euclidean-onestep-Lp}
 \norm w_{C_tL^p(\R^2)}
 \leq C_{E,A,L,\ell,p,\eps}
 \la^{2(1-\eps)(1/3-1/p)},
 \qquad1\leq p<3.
\end{equation}
The support of the profile, the amplitude cutoff, and
\eqref{eq:cubic-profile-endpoints} give
\begin{equation*}
 \begin{aligned}
 \norm w_{C_tH^\alpha}
 &\leq C_{E,A,L,\ell,\alpha,\eps}
 \la^{\alpha-(1-\eps)/3}
 &&\text{for the isotropic profile},\\
 \norm w_{C_tH^{\alpha,0}}
 &\leq C_{E,A,L,\ell,\alpha,\eps}
 \la^{\alpha-(1-\eps)/2}
 &&\text{for the parabolic profile}.
 \end{aligned}
\end{equation*}
The Sobolev estimates and \eqref{eq:cubic-Euclidean-onestep-Lp} prove
\eqref{eq:cubic-Euclidean-onestep-regularity}.

Equation~\eqref{eq:cubic-profile-support} and the amplitude cutoff give
\eqref{eq:cubic-Euclidean-onestep-support}.  Condition
\eqref{eq:cubic-Euclidean-onestep-separation} gives
$\supp\widehat u\cap\supp\widehat w=\varnothing$ and
$\widehat{u^+}(t,\xi,\eta)=0$ for $\abs\xi<c_0$.
\end{proof}

\section{Iteration}

Fix $r\in\{2,3\}$, $d\in\{3,5\}$, and
$\kappa\in\{-1,1\}$, and assume throughout this section that
$S>d+1$.  For $r=2$, take the profile in Lemma~\ref{lem:slab}.  For
$r=3$, carry out the induction with $\rho_{3,\la,\eps}$ and with
$\widetilde{\rho}_{3,\la,\eps}$ from
Lemma~\ref{lem:cubic-profile}.

Set
\[
 p_0=1,\qquad p_j=r-2^{-j}\quad(j\geq1).
\]
Set
\[
 \alpha_j=\frac13(1-2^{-j}),\qquad
 s_j=\frac12(1-2^{-j})\quad(j\geq0),
\]
for the isotropic and parabolic profiles, respectively.
For $q\geq1$ let
\[
 \eps_q=2^{-q-4},\qquad \beta=\frac13,\qquad
 \delta_q=2^{-q-20},\qquad
 \mu_q=\la_q^{\eps_q},\quad \nu_q=\la_q^{1+\eps_q},
\]
and put $\delta_0=2^{-20}$.  When $0\leq j\leq q$,
\[
 \beta<\frac12(1-\eps_q)<\frac43(1-\eps_q),\qquad
 \alpha_j<\frac13(1-\eps_q),\qquad
 s_j<\frac12(1-\eps_q).
\]
For every sufficiently large integer $\mu_q$, the choice
$\la_q=\mu_q^{2^{q+4}}$ is admissible.

Let
\[
 I_0=\left[\frac{11}{32},\frac{21}{32}\right],
 \qquad I_*=\left[\frac38,\frac58\right],
\]
and define
\[
 I_q=\left[
  \frac{11}{32}-\sum_{j=1}^q\la_j^{-\beta},
  \frac{21}{32}+\sum_{j=1}^q\la_j^{-\beta}
 \right].
\]
The bound $\la_q^{-\beta}\leq2^{-q-8}$ makes the time-support
enlargements summable and keeps
$I_*\subset I_q\Subset(5/16,11/16)$ for every $q$.  Fix
$\theta\in C_c^\infty(\operatorname{int}I_0)$ equal to one on $I_*$.

\subsection{Periodic induction}

Fix $m_*\in\Z\setminus\{0\}$ and $\gamma>0$, and let $n_*\geq1$ be an
integer to be chosen.  Put
\begin{equation*}
 u_0(t,x,y)=\gamma\theta(t)
 \cos(2\pi n_*x)\cos(2\pi m_*y)
\end{equation*}
and
\begin{equation}\label{eq:periodic-induction-initial-pair}
 E_0=\mathbb P_{x\neq0}\left(
 u_0^r+(-1)^{(d+1)/2}\partial_x^{d-1}u_0
 +\kappa\partial_x^{-2}\partial_y^2u_0
 +\partial_t\partial_x^{-1}u_0\right).
\end{equation}
The pair satisfies the relaxed form of \eqref{eq:KP-family}, and
\begin{equation}\label{eq:periodic-induction-initial-error}
 \norm{E_0}_{\A^{-S}_{x\neq0,t}}
 \leq C_{\theta,m_*,S,r}\left(
 \gamma n_*^{d-1-S}+\gamma n_*^{-S-1}
 +\gamma n_*^{-S-2}+\gamma^r n_*^{-S}\right).
\end{equation}
Choose $n_*$ so large that
$\norm{E_0}_{\A^{-S}_{x\neq0,t}}<\delta_0$.  Also,
\begin{equation}\label{eq:periodic-induction-initial-coefficient}
 \widehat u_0(t,n_*,m_*)=\frac\gamma4\theta(t).
\end{equation}

\begin{proposition}\label{prop:inductive}
There are increasing admissible frequencies $(\la_q)_{q\geq1}$ and smooth real pairs
$(u_q,E_q)_{q\geq0}$ with finite spatial Fourier support.  Put
\[
 K_q=\max\left(\{1\}\cup
 \left\{\langle(n,m)\rangle:
 (n,m)\in\supp\widehat u_q\cup\supp\widehat E_q\right\}\right).
\]
For every $q$, the following hold.
\begin{enumerate}[label=\textup{(\roman*)}]
 \item Both $u_q$ and $E_q$ have zero $x$-mean and
 \begin{equation}\label{eq:periodic-induction-indexed-relaxed}
  \partial_tu_q+(-1)^{(d+1)/2}\partial_x^du_q
  +\kappa\partial_x^{-1}\partial_y^2u_q+\partial_x(u_q^r)
  =\partial_xE_q.
 \end{equation}

 \item The time supports of $u_q$ and $E_q$ are contained in $I_q$.

 \item For $q\geq1$, let $w_q=u_q-u_{q-1}$.  On
 $\supp\widehat w_q$,
 \begin{equation*}
 \begin{aligned}
  r=2:&\quad \frac12\mu_q\leq\abs n\leq3\nu_q,
       \qquad \abs m\leq2\mu_q^{1/2},\\
  \text{isotropic profile}:&\quad
       3\la_q\leq\abs n\leq18\la_q,
       \qquad \abs m\leq2\la_q,\\
  \text{parabolic profile}:&\quad
       3\la_q\leq\abs n\leq18\la_q,
       \qquad \abs m\leq2\la_q^2
 \end{aligned}
 \end{equation*}
 The parameters satisfy
 \begin{equation*}
  \mu_q\geq256,\qquad16K_{q-1}\leq\mu_q^{1/2}.
 \end{equation*}
 Consequently the sets $\supp\widehat w_q$ are pairwise disjoint,
 $K_q>K_{q-1}$, and, for some $C_*>0$ independent of $q$,
 \begin{equation}\label{eq:periodic-induction-support}
  \supp\widehat u_q\subset
  \begin{cases}
   \{(n,m):\abs m\leq C_*\abs n\},
      &r=2\text{ or for the isotropic profile},\\
   \{(n,m):\abs m\leq C_*\abs n^2\},
      &\text{for the parabolic profile}.
  \end{cases}
 \end{equation}

 \item For $q\geq1$ and $0\leq j\leq q$,
 \begin{equation*}
  \norm{w_q}_{C_tL^{p_j}(\T^2)}<\delta_{q-1}.
 \end{equation*}
 For the isotropic profile,
 \begin{equation}\label{eq:periodic-induction-Sobolev-diagonal}
  \norm{w_q}_{C_tH^{\alpha_j}(\T^2)}
  <\delta_{q-1}
  \qquad(0\leq j\leq q).
 \end{equation}
 For the parabolic profile,
 \begin{equation}\label{eq:periodic-induction-parabolic-Sobolev-diagonal}
  \norm{w_q}_{C_tH^{s_j,0}(\T^2)}
  <\delta_{q-1}
  \qquad(0\leq j\leq q).
 \end{equation}

 \item The coefficient at the fixed mode $(n_*,m_*)$ is preserved:
 \begin{equation}\label{eq:periodic-induction-preserved-coefficient}
  \widehat u_q(t,n_*,m_*)=\frac\gamma4\theta(t).
 \end{equation}

 \item The errors satisfy
 \begin{equation}\label{eq:periodic-induction-error}
  \norm{E_q}_{\A^{-S}_{x\neq0,t}}<\delta_q.
 \end{equation}

 \item
 \begin{equation}\label{eq:Qbound}
  \operatorname{AF}_{S,r}^x(u_q)
  \leq\operatorname{AF}_{S,r}^x(u_0)
   +2\sum_{j=0}^{q-1}\delta_j.
 \end{equation}
\end{enumerate}
\end{proposition}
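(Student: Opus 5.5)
The proof is an induction on $q$. At each stage we apply the one-step results, Proposition~\ref{prop:onestep} for $r=2$ and Proposition~\ref{prop:cubic-periodic-onestep} for $r=3$ (with either profile), with tolerances tied to $\delta_q$.

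\emph{Base case.} For $q=0$ we take $(u_0,E_0)$ as defined above. Then (i) holds by the definition \eqref{eq:periodic-induction-initial-pair} of $E_0$. Item (ii) holds because $\supp\theta\subset I_0$. Item (v) is \eqref{eq:periodic-induction-initial-coefficient}. Item (vi) follows from \eqref{eq:periodic-induction-initial-error} and the choice of $n_*$. Item (vii) is trivial. The cone condition \eqref{eq:periodic-induction-support} holds for $u_0$ once $C_*\geq\abs{m_*}$, since $n_*\geq1$. The Fourier support of $E_0$ is finite, so $K_0$ is finite.

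\emph{Inductive step.} Given $(u_{q-1},E_{q-1})$, we apply the one-step proposition with the following data:
\begin{itemize}
\item $\eps=\eps_q$ and $\beta=1/3$, which is admissible since $\beta<\frac12(1-\eps_q)$;
\item $I=I_{q-1}$;
\item the exponent list $p_0,\dots,p_q$ and, in the cubic case, $\alpha_0,\dots,\alpha_q$ or $s_0,\dots,s_q$;
\item $\eta=\delta_{q-1}$ and $\delta_+=\delta_q$.
\end{itemize}
Every listed exponent satisfies the strict inequalities displayed before $I_0$, so the hypotheses hold.

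We take $\la_q$ of the form $\mu_q^{2^{q+4}}$, large enough that four conditions hold: $\mu_q\geq256$, $16K_{q-1}\leq\mu_q^{1/2}$, $\la_q>\la_{q-1}$, and $\la_q^{-\beta}\leq2^{-q-8}$. Since the one-step propositions work for every sufficiently large admissible $\la$, all of these can be imposed simultaneously.

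The one-step output then gives the following items directly:
\begin{itemize}
\item (i) and the zero $x$-mean property;
\item (ii), because the new time support lies in the $\la_q^{-\beta}$-neighbourhood of $I_{q-1}$, which is $I_q$;
\item (iv), from the $L^p$ and Sobolev smallness of $w_q$;
\item (vi), from \eqref{eq:onesteperror} and \eqref{eq:cubic-periodic-onestep-error}.
\end{itemize}
For (vii), \eqref{eq:onestepQ} or \eqref{eq:cubic-periodic-onestep-AF} yields
\[
\operatorname{AF}(u_q)\leq\operatorname{AF}(u_{q-1})+\delta_{q-1}+\delta_{q-1}.
\]
Combining this with the inductive hypothesis gives \eqref{eq:Qbound}.

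\emph{Support bookkeeping (item (iii)).} The support bounds for $w_q$ are \eqref{eq:wsupport} and \eqref{eq:cubic-periodic-support}. Every frequency of $w_q$ has $\abs n\geq\mu_q/2>K_{q-1}$, so $w_q$ avoids all earlier supports. Since the earlier perturbations lie inside the ball of radius $K_{q-1}$, the sets $\supp\widehat w_q$ are pairwise disjoint and $K_q>K_{q-1}$.

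For the cone \eqref{eq:periodic-induction-support}, the new modes satisfy:
\begin{itemize}
\item for $r=2$, $\abs m\leq2\mu_q^{1/2}\leq4\abs n$;
\item for the isotropic profile, $\abs m\leq2\la_q\leq\abs n$;
\item for the parabolic profile, $\abs m\leq2\la_q^2\leq\abs n^2$.
\end{itemize}
So $C_*=\max(\abs{m_*},4)$ works uniformly in $q$.

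Item (iii) must also cover $\supp\widehat E_q$, because it enters $K_q$. This is only a matter of finiteness: $E_q$ is built from $u_q$ and finite Fourier multipliers, hence has finite support.

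\emph{Main obstacle: item (v).} The fixed mode $(n_*,m_*)$ must never be touched. Since $\abs{n_*}\leq K_0\leq K_{q-1}$, the bound $\abs n\geq\mu_q/2>K_{q-1}$ shows that $\supp\widehat w_q$ misses $(n_*,m_*)$. Hence $\widehat u_q(t,n_*,m_*)=\widehat u_{q-1}(t,n_*,m_*)$. The only subtle point is making sure that the threshold $16K_{q-1}\leq\mu_q^{1/2}$ dominates every earlier frequency, including those of $E_{q-1}$. This is why $K_q$ is defined over both $\supp\widehat u_q$ and $\supp\widehat E_q$.
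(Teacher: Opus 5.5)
Your proposal is correct and follows the paper's own argument. The paper likewise runs an induction through Proposition~\ref{prop:onestep} or Proposition~\ref{prop:cubic-periodic-onestep}, with $\eps=\eps_q$, $\eta=\delta_{q-1}$, $\delta_+=\delta_q$, exponent lists up to index $q$, and $\la_q$ large enough that $\mu_q\geq256$, $16K_{q-1}\leq\mu_q^{1/2}$ and $\la_q^{-\beta}\leq2^{-q-8}$.

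One step needs to be stated, as the paper does: $w_q\not\equiv0$. This holds because $h=1$ on $I_{q-1}$ and the amplitude's zero Fourier coefficient $\int(A-E_{q-1})^{1/r}$ is positive. You need it because the strict inequality $K_q>K_{q-1}$, and hence the monotonicity you use for $K_0\leq K_{q-1}$, requires $\supp\widehat w_q$ to be nonempty.
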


\begin{proof}
Equations \eqref{eq:periodic-induction-initial-pair}--
\eqref{eq:periodic-induction-initial-coefficient} establish the base case.

Assume the induction hypotheses hold at index $q$.  If $r=2$, apply
Proposition~\ref{prop:onestep} to $(u_q,E_q)$, $I_q$, and
$p_0,\ldots,p_{q+1}$.  For the isotropic profile, apply
Proposition~\ref{prop:cubic-periodic-onestep} to the same pair and
interval with $\rho_{3,\la,\eps}$ and
$\alpha_0,\ldots,\alpha_{q+1}$.  For the parabolic profile, use
$\widetilde{\rho}_{3,\la,\eps}$ and
$s_0,\ldots,s_{q+1}$.  In every case take
\[
 \eps=\eps_{q+1},\qquad \eta=\delta_q,\qquad
 \delta_+=\delta_{q+1}.
\]
Choose $A$ and then an admissible
$\la_{q+1}$ so large that the required increment, error, and support
estimates hold, with $\la_{q+1}>\la_q$ when $q\geq1$, and
\begin{equation}\label{eq:periodic-induction-parameter-choice}
 \mu_{q+1}\geq256,\qquad
 16K_q\leq\mu_{q+1}^{1/2},\qquad
 \la_{q+1}^{-\beta}\leq2^{-q-9}.
\end{equation}
Let $w_{q+1}$ and $E_{q+1}$ be the perturbation and updated error given
by the one-step proposition, and set
$u_{q+1}=u_q+w_{q+1}$.
The one-step support bound and
\eqref{eq:periodic-induction-parameter-choice} make
$\supp\widehat w_{q+1}$ disjoint from
$\supp\widehat u_q$.  The support bounds for $w_{q+1}$, together with
the finite support of $u_0$, give
\eqref{eq:periodic-induction-support}.  The amplitude is nonzero on $I_q$, so
$w_{q+1}$ is nonzero and
\[
 K_{q+1}\geq\frac12\mu_{q+1}
 \geq128K_q^2>K_q.
\]
Since $(n_*,m_*)\notin\supp\widehat w_{q+1}$,
\eqref{eq:periodic-induction-preserved-coefficient} is preserved.  The one-step
absolute Fourier estimate and \eqref{eq:periodic-induction-error} give
\begin{align*}
 \operatorname{AF}_{S,r}^x(u_{q+1})
 \leq\operatorname{AF}_{S,r}^x(u_q)+2\delta_q
 \leq\operatorname{AF}_{S,r}^x(u_0)
   +2\sum_{j=0}^{q}\delta_j.
\end{align*}
\end{proof}

\subsection{Whole-space induction}

Choose $N_*,M_*>0$ and
$0<\varrho_*<\frac14\min\{N_*,M_*\}$.  Let
$f\in\mathcal S(\R^2)$ be real and nonzero, with smooth compactly
supported Fourier transform contained in the four balls of radius
$\varrho_*$ centered at $(\pm N_*,\pm M_*)$, and nonzero in each ball.
Put $c_0=N_*-\varrho_*>0$, $u_0=\gamma\theta f$, and
\begin{equation}\label{eq:Euclidean-induction-initial-pair}
 E_0=u_0^r+(-1)^{(d+1)/2}\partial_x^{d-1}u_0
 +\kappa\partial_x^{-2}\partial_y^2u_0
 +\partial_t\partial_x^{-1}u_0.
\end{equation}
Then
\begin{equation}\label{eq:Euclidean-induction-initial-error}
 \norm{\partial_xE_0}_{\A^{-S}_{\R^2,t}}
 \leq C_{f,\theta,S,r}(\gamma+\gamma^r).
\end{equation}
Fix a nonzero $\gamma$ so small that
$\norm{\partial_xE_0}_{\A^{-S}_{\R^2,t}}<\delta_0$, and fix an integer
$M>S+6$.

\begin{proposition}\label{prop:Euclidean-induction}
There exist a sequence $(\ell_q)_{q\geq1}$ with $\ell_q\geq1$,
increasing admissible frequencies $(\la_q)_{q\geq1}$, and smooth real
pairs $(u_q,E_q)_{q\geq0}$, Schwartz in space and with
compact spatial Fourier support.  Put
\[
 K_q^x=\max\left(\{1\}\cup
 \{\abs\xi:(\xi,\eta)\in\supp\widehat u_q\}\right).
\]
\begin{enumerate}[label=\textup{(\roman*)}]
 \item Each pair satisfies
 \begin{equation}\label{eq:Euclidean-induction-indexed-relaxed}
  \partial_tu_q+(-1)^{(d+1)/2}\partial_x^du_q
  +\kappa\partial_x^{-1}\partial_y^2u_q+\partial_x(u_q^r)
  =\partial_xE_q.
 \end{equation}

 \item The time supports of $u_q$ and $E_q$ are contained in $I_q$.

 \item For $q\geq1$, let $w_q=u_q-u_{q-1}$.  On
 $\supp\widehat w_q$,
 \begin{equation*}
 \begin{aligned}
  r=2:&\quad \mu_q-2\ell_q\leq\abs\xi
       \leq2\nu_q+2\ell_q,
       \qquad \abs\eta\leq2\ell_q,\\
 \text{isotropic profile}:&\quad
       3\la_q\leq\abs\xi\leq18\la_q,
       \qquad \abs\eta\leq2\la_q,\\
 \text{parabolic profile}:&\quad
       3\la_q\leq\abs\xi\leq18\la_q,
       \qquad \abs\eta\leq2\la_q^2
 \end{aligned}
 \end{equation*}
 The parameters satisfy
 \begin{equation*}
  \mu_q\geq16\ell_q,\qquad
  \frac12\mu_q>
  8\max\left(\{1,c_0\}\cup
  \{\abs k:k\in\supp\widehat u_{q-1}\}\right).
 \end{equation*}
 Consequently the projections of $\supp\widehat w_q$ onto the $\xi$-axis
 are pairwise disjoint, and $K_q^x>K_{q-1}^x$.

 \item For every $q$,
 \begin{equation*}
  \widehat u_q(t,\xi,\eta)=0\qquad(\abs\xi<c_0),
 \end{equation*}
 and, for some $C_*>0$ independent of $q$,
 \begin{equation}\label{eq:Euclidean-induction-cone}
  \supp\widehat u_q\subset
  \begin{cases}
   \{(\xi,\eta):\abs\eta\leq C_*\abs\xi\},
      &r=2\text{ or for the isotropic profile},\\
   \{(\xi,\eta):\abs\eta\leq C_*\abs\xi^2\},
      &\text{for the parabolic profile}.
  \end{cases}
 \end{equation}

 \item For $q\geq1$ and $0\leq j\leq q$,
 \begin{equation*}
  \norm{w_q}_{C_tL^{p_j}(\R^2)}<\delta_{q-1}.
 \end{equation*}
 For the isotropic profile,
 \begin{equation}\label{eq:Euclidean-induction-Sobolev-diagonal}
  \norm{w_q}_{C_tH^{\alpha_j}(\R^2)}
  <\delta_{q-1}
  \qquad(0\leq j\leq q).
 \end{equation}
 For the parabolic profile,
 \begin{equation}\label{eq:Euclidean-induction-parabolic-Sobolev-diagonal}
  \norm{w_q}_{C_tH^{s_j,0}(\R^2)}
  <\delta_{q-1}
  \qquad(0\leq j\leq q).
 \end{equation}

 \item For every $q$,
 \begin{equation}\label{eq:Euclidean-induction-error}
  \norm{\partial_xE_q}_{\A^{-S}_{\R^2,t}}<\delta_q.
 \end{equation}

 \item
 \begin{equation}\label{eq:Euclidean-induction-AF-bound}
  \operatorname{AF}^{\partial_x}_{S,r}(u_q)
  \leq\operatorname{AF}^{\partial_x}_{S,r}(u_0)
   +2\sum_{j=0}^{q-1}\delta_j.
 \end{equation}

 \item On the initial Fourier support,
 \begin{equation}\label{eq:Euclidean-induction-preserved-support}
  \widehat u_q(t,\xi,\eta)=\gamma\theta(t)\widehat f(\xi,\eta)
  \qquad((\xi,\eta)\in\supp\widehat f).
 \end{equation}
\end{enumerate}
\end{proposition}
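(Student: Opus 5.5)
The argument is an induction on $q$ that mirrors the proof of Proposition~\ref{prop:inductive}, with the whole-space one-step results replacing the periodic ones.

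\emph{Base case.} For $q=0$, the pair $(u_0,E_0)$ from \eqref{eq:Euclidean-induction-initial-pair} satisfies \eqref{eq:Euclidean-induction-indexed-relaxed} by construction. It is smooth, Schwartz in space, and has compact Fourier support. Its time support lies in $\supp\theta\subset I_0$. The choice of $\gamma$ gives \eqref{eq:Euclidean-induction-error}, and \eqref{eq:Euclidean-induction-AF-bound} is trivial. The gap (iv) follows because $\supp\widehat f$ lies in balls centred at $(\pm N_*,\pm M_*)$ of radius $\varrho_*$, so $\abs\xi\geq N_*-\varrho_*=c_0$ there. The cone condition holds with $C_*\geq (M_*+\varrho_*)/c_0$. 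For the parabolic family we use $\abs\eta\lesssim\abs\xi\leq\abs\xi^2/c_0$ and enlarge $C_*$ accordingly. Finally, (viii) is the definition of $u_0$.

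\emph{Inductive step.} Assume the hypotheses hold at index $q$. For $r=2$, apply Proposition~\ref{prop:Euclidean-onestep} to $(u_q,E_q)$ with interval $I_q$ and exponents $p_0,\ldots,p_{q+1}$. For the cubic families, apply Proposition~\ref{prop:cubic-Euclidean-onestep} with $\alpha_0,\ldots,\alpha_{q+1}$ or $s_0,\ldots,s_{q+1}$. In either case take $\eps=\eps_{q+1}$, $\delta_w=\delta_q$, $\delta_+=\delta_{q+1}$, and the fixed $M$. The index conditions listed before the subsection guarantee $\beta<\frac12(1-\eps_{q+1})$ and the Sobolev exponent constraints. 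We choose $A$, $L$, $\ell=\ell_{q+1}$ in that order. Since every sufficiently large admissible $\la$ works, we then take $\la_{q+1}>\la_q$ so that:
\begin{itemize}
\item $\mu_{q+1}\geq16\ell_{q+1}$;
\item $\frac12\mu_{q+1}>8\max(\{1,c_0\}\cup\{\abs k:k\in\supp\widehat u_q\})$;
\item $\la_{q+1}^{-\beta}\leq2^{-q-9}$.
\end{itemize}
This yields (i), (ii) with $I_{q+1}$, (v), and (vi). The AF bound follows as in the periodic case:
\[
\operatorname{AF}(u_{q+1})\leq\operatorname{AF}(u_q)+\delta_q+\delta_q .
\]

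\emph{Support bookkeeping.} For (iii), the support of $w_{q+1}$ is read off from \eqref{eq:Euclidean-onestep-support} or \eqref{eq:cubic-Euclidean-onestep-support}. The lower bound $\abs\xi\geq\mu_{q+1}-2\ell_{q+1}$ exceeds every $\abs\xi$ on $\supp\widehat u_q$, which contains all earlier $\supp\widehat w_j$. The upper bounds for earlier increments are below this threshold because $\mu_{q+1}$ dominates all of $\supp\widehat u_q$. Hence the $\xi$-projections are pairwise disjoint, and $K^x_{q+1}>K^x_q$ since $w_{q+1}\neq0$: its amplitude is nonzero near $I_q$ because $\chi_L(0)=1$ and $A-E>0$. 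The gap $\widehat{u_{q+1}}=0$ for $\abs\xi<c_0$ is inherited from the one-step propositions.

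For the cone, the $r=2$ increment has $\abs\eta\leq2\ell_{q+1}\leq\mu_{q+1}/8\leq\abs\xi$. The isotropic increment has $\abs\eta\leq2\la\leq\abs\xi$. The parabolic increment has $\abs\eta\leq2\la^2\leq\abs\xi^2$. So a $q$-independent $C_*$ works. For (viii), $\supp\widehat f\subset\{\abs\xi\leq N_*+\varrho_*\}\subset\{\abs k\leq\ldots\}\subset$ the region below $\mu_{q+1}/16$, which is disjoint from $\supp\widehat w_{q+1}$. Therefore $\widehat u_{q+1}=\widehat u_q$ there.

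\emph{Main obstacle.} The main obstacle is verifying the hypotheses of the one-step propositions at each stage. In particular, $u_q$ must have compact Fourier support with $\widehat u_q=0$ near $\xi=0$ and finite $\operatorname{AF}$ and $\A^0$ norms, and $E_q$ must lie in $C^\infty_t\mathcal S$. The update \eqref{eq:Euclidean-update-general} must also stay Schwartz: the gap makes $\partial_x^{-1}w$ and $\partial_x^{-2}\partial_y^2w$ Schwartz, while $(u+w)^r-u^r$ is a polynomial in Schwartz functions.
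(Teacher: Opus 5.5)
Your proposal is correct and follows the paper's proof essentially step by step. The base case comes from the initial pair and the choice of $\gamma$. The inductive step applies the whole-space one-step propositions with $\eps=\eps_{q+1}$, $\delta_w=\delta_q$, $\delta_+=\delta_{q+1}$, with parameters chosen in the order $A\to L\to\ell_{q+1}\to\la_{q+1}$. The same separation conditions on $\mu_{q+1}$ give disjointness, the gap, the cone, preservation of $\widehat f$, and $\operatorname{AF}(u_{q+1})\leq\operatorname{AF}(u_q)+2\delta_q$. Your explicit check that the one-step hypotheses (Schwartz, compact Fourier support, gap) propagate is something the paper leaves implicit in the one-step conclusions.
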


\begin{proof}
Equations \eqref{eq:Euclidean-induction-initial-pair} and
\eqref{eq:Euclidean-induction-initial-error} establish the base case; the
Fourier transform of $u_0$ vanishes for $\abs\xi<c_0$, and
\[
 \abs\eta\leq\frac{M_*+\varrho_*}{N_*-\varrho_*}\abs\xi,
 \qquad
 \abs\eta\leq
 \frac{M_*+\varrho_*}{(N_*-\varrho_*)^2}\abs\xi^2.
\]

Assume the induction hypotheses hold at index $q$.  If $r=2$, apply
Proposition~\ref{prop:Euclidean-onestep}.  For the isotropic profile,
apply Proposition~\ref{prop:cubic-Euclidean-onestep} with
$\rho_{3,\la,\eps}$ and
$\alpha_0,\ldots,\alpha_{q+1}$.  For the parabolic profile, use
$\widetilde{\rho}_{3,\la,\eps}$ and
$s_0,\ldots,s_{q+1}$.  In every case take $I=I_q$,
$\delta_w=\delta_q$, the gap parameter $c_0$, the exponents
$p_0,\ldots,p_{q+1}$, and
\[
 \eps=\eps_{q+1},\qquad \delta_+=\delta_{q+1}.
\]
Choose the parameters in the order
\[
 A\longrightarrow L\longrightarrow
 \ell_{q+1}\longrightarrow\la_{q+1}.
\]
After $A,L,\ell_{q+1}$ are fixed, take an admissible $\la_{q+1}$
sufficiently large that the one-step conclusions hold, with
$\la_{q+1}>\la_q$ when $q\geq1$, and
\begin{equation}\label{eq:Euclidean-induction-parameter-choice}
 \mu_{q+1}\geq16\ell_{q+1},\qquad
 \frac12\mu_{q+1}>
 8\max\left(\{1,c_0\}\cup
 \{\abs k:k\in\supp\widehat u_q\}\right),
 \qquad \la_{q+1}^{-\beta}\leq2^{-q-9}.
\end{equation}
Let $w_{q+1}$ and $E_{q+1}$ be the perturbation and updated error given
by the one-step proposition, and set
$u_{q+1}=u_q+w_{q+1}$.
The support estimate and
\eqref{eq:Euclidean-induction-parameter-choice} give
\[
 \abs\xi\geq\frac78\mu_{q+1}>14K_q^x
 \qquad\text{on }\supp\widehat w_{q+1}.
\]
Thus $\supp\widehat w_{q+1}$ is disjoint from
$\supp\widehat u_q$.  Hence $\widehat u_{q+1}=0$ for $\abs\xi<c_0$, and
\eqref{eq:Euclidean-induction-preserved-support} is preserved.  The
support bound for $w_{q+1}$ gives
\eqref{eq:Euclidean-induction-cone}.  The amplitude is
nonzero on $I_q$, so $w_{q+1}$ is nonzero and
$K_{q+1}^x>14K_q^x$.  The one-step absolute
Fourier estimate and \eqref{eq:Euclidean-induction-error} give
\[
 \operatorname{AF}^{\partial_x}_{S,r}(u_{q+1})
 \leq\operatorname{AF}^{\partial_x}_{S,r}(u_q)+2\delta_q.
\]
\end{proof}

\begin{proof}[Proof of Theorems~\ref{thm:periodic} and
\ref{thm:Euclidean}]
On $\T^2$, Proposition~\ref{prop:inductive} gives convergence for the
exponents $p_j$ in the iteration, and interpolation gives
\[
 u_q\longrightarrow u\quad\text{in }C_tL^p(\T^2),
 \qquad 1\leq p<r.
\]
For the isotropic cubic profile,
\eqref{eq:periodic-induction-Sobolev-diagonal} and interpolation give convergence in
$C_tH^\alpha$ for $0\leq\alpha<1/3$.  This controls $H^{\alpha,0}$
for $\alpha\geq0$, while the $L^2$ convergence controls both Sobolev
norms for $\alpha<0$.  Thus the convergence holds in
$C_t(H^{\alpha,0}\cap H^\alpha)$ for every $\alpha<1/3$.  For the
parabolic profile,
\eqref{eq:periodic-induction-parabolic-Sobolev-diagonal} and interpolation give convergence
in $C_tH^{s,0}$ for every $s<1/2$; on the support in
\eqref{eq:periodic-induction-support},
\[
 \norm f_{H^\alpha}\lesssim\norm f_{H^{2\alpha,0}}
 \quad(0\leq\alpha<1/4),
 \qquad
 \norm f_{H^\alpha}\leq\norm f_{L^2}
 \quad(\alpha<0),
\]
which gives $C_tH^\alpha$ for every $\alpha<1/4$.

For the quadratic profile, Hausdorff--Young and H\"older give, for every
$\alpha<0$ and some $p<2$,
\begin{equation}\label{eq:periodic-induction-negative-Sobolev}
 \norm f_{H^{\alpha,0}(\T^2)}
 \lesssim_{\alpha,p,C_*}\norm f_{L^p(\T^2)},
 \qquad
 \supp\widehat f\subset
 \{(n,m):\abs m\leq C_*\abs n\}.
\end{equation}
Applied to $u-u_q$, this gives convergence in $C_tH^{\alpha,0}$; the
bound $\abs m\leq C_*\abs n$ implies
$\langle(n,m)\rangle^\alpha\simeq\langle n\rangle^\alpha$, so the same
convergence holds in $C_tH^\alpha$.

The limit is real, has zero $x$-mean, satisfies
\eqref{eq:periodic-induction-support}, and has time support in
$(5/16,11/16)$.  Equation
\eqref{eq:periodic-induction-preserved-coefficient} gives
\[
 \widehat u(t,n_*,m_*)=\frac\gamma4\theta(t),
\]
so $\partial_yu\not\equiv0$.  Convergence in $C_tL^1$ gives uniform
convergence of every Fourier coefficient.  Fatou's lemma and
\eqref{eq:Qbound} then give
$\operatorname{AF}_{S,r}^x(u)<\infty$.

Let $P_{\leq K_q}^{x,y}$ be a smooth radial cutoff equal to one on the
ball of radius $K_q$ and zero outside the ball of radius $2K_q$.
The frequency inequalities in Proposition~\ref{prop:inductive} give
$\abs k>2K_q$ on $\supp\widehat w_j$ for every $j>q$.  Hence
\[
 P_{\leq K_q}^{x,y}u=u_q.
\]
Proposition~\ref{prop:cutoff} therefore gives
\[
 \mathbb P_{x\neq0}(u_q^r)
 \longrightarrow\mathbb P_{x\neq0}(u^r)
 \quad\text{in }\A^{-S}_{x\neq0,t}.
\]
Testing \eqref{eq:periodic-induction-indexed-relaxed} and using
\eqref{eq:periodic-induction-error} proves \eqref{eq:weakform} with
$u_{\mathrm{in}}=0$.

On $\R^2$, Proposition~\ref{prop:Euclidean-induction} gives convergence
for the exponents $p_j$, and interpolation between consecutive exponents
gives
\[
 u_q\longrightarrow u\quad\text{in }C_tL^p(\R^2),
 \qquad 1\leq p<r.
\]
For the isotropic cubic profile,
\eqref{eq:Euclidean-induction-Sobolev-diagonal} and interpolation give convergence in
$C_tH^\alpha$ for $0\leq\alpha<1/3$.  This controls $H^{\alpha,0}$
for $\alpha\geq0$, while the $L^2$ convergence controls both Sobolev
norms for $\alpha<0$.  Thus the convergence holds in
$C_t(H^{\alpha,0}\cap H^\alpha)$ for every $\alpha<1/3$.  For the
parabolic profile,
\eqref{eq:Euclidean-induction-parabolic-Sobolev-diagonal} and
interpolation give convergence in $C_tH^{s,0}$ for every $s<1/2$;
\eqref{eq:Euclidean-induction-cone} then gives convergence in
$C_tH^\alpha$ for every $\alpha<1/4$.  For $r=2$,
Hausdorff--Young and H\"older give, for every $\alpha<0$ and a suitable
$p<2$,
\[
 \norm g_{H^{\alpha,0}(\R^2)}
 \lesssim_{\alpha,p,C_*,c_0}\norm g_{L^p(\R^2)},
 \qquad
 \supp\widehat g\subset
 \{(\xi,\eta):\abs\xi\geq c_0,\ \abs\eta\leq C_*\abs\xi\}.
\]
Applied to $u-u_q$, this estimate and
\eqref{eq:Euclidean-induction-cone} give convergence in
$C_t(H^{\alpha,0}\cap H^\alpha)$ for every $\alpha<0$.

The limit is real, vanishes in Fourier space for $\abs\xi<c_0$, has time
support in $(5/16,11/16)$, and satisfies
\eqref{eq:Euclidean-induction-cone} and
\eqref{eq:Euclidean-induction-preserved-support}.
Equation~\eqref{eq:Euclidean-induction-preserved-support} gives
$\partial_yu\not\equiv0$.  Convergence in $C_tL^1$ gives uniform
convergence of the Fourier transforms.  Fatou's lemma and
\eqref{eq:Euclidean-induction-AF-bound} give
$\operatorname{AF}^{\partial_x}_{S,r}(u)<\infty$.

Let $P_{\leq K_q^x}^x$ be a smooth cutoff equal to one on
$[-K_q^x,K_q^x]$ and zero outside $[-2K_q^x,2K_q^x]$.
The frequency inequalities in Proposition~\ref{prop:Euclidean-induction}
give $\abs\xi>2K_q^x$ on $\supp\widehat w_j$ whenever $j>q$.  Hence
\[
 P_{\leq K_q^x}^xu=u_q.
\]
Proposition~\ref{prop:Euclidean-cutoff} therefore yields
\[
 \partial_x(u_q^r)=\partial_x[(P_{\leq K_q^x}^xu)^r]
 \longrightarrow\partial_x[u^r]
 \quad\text{in }\A^{-S}_{\R^2,t}.
\]
Testing \eqref{eq:Euclidean-induction-indexed-relaxed}, using
\eqref{eq:Euclidean-transverse-pairing} for the transverse term and
\eqref{eq:Euclidean-induction-error} for the error, proves
\eqref{eq:Euclidean-weak-form}.

Extend $u$ by zero in time and set
\[
 u_{\Lambda,t_0}(t,x,y)
 =\Lambda^{(d-1)/(r-1)}u\bigl(
 \Lambda^d(t-t_0),\Lambda x,\Lambda^{(d+1)/2}y\bigr).
\]
On $\T^2$, take $\Lambda\in\mathbb N$; the
nonzero Fourier coefficients satisfy
\[
 \widehat{u_{\Lambda,t_0}}
 \bigl(t,\Lambda n,\Lambda^{(d+1)/2}m\bigr)
 =\Lambda^{(d-1)/(r-1)}
 \widehat u\bigl(\Lambda^d(t-t_0),n,m\bigr).
\]
On $\R^2$,
\[
 \widehat{u_{\Lambda,t_0}}(t,\xi,\eta)
 =\Lambda^{(d-1)/(r-1)-(d+3)/2}
 \widehat u\left(\Lambda^d(t-t_0),
       \frac\xi\Lambda,\frac\eta{\Lambda^{(d+1)/2}}\right).
\]
These identities show that the Fourier convolutions commute with the
scaling.  Since
$r(d-1)/(r-1)+1=(d-1)/(r-1)+d$, a change of variables in the weak
formulation shows that $u_{\Lambda,t_0}$ solves the same equation.  The
Fourier formulas also give
\[
 \operatorname{AF}_{S,r}^x(u_{\Lambda,t_0})
 \leq\Lambda^{r(d-1)/(r-1)}\operatorname{AF}_{S,r}^x(u),
 \qquad
 \operatorname{AF}^{\partial_x}_{S,r}(u_{\Lambda,t_0})
 \leq\Lambda^{r(d-1)/(r-1)+1}
       \operatorname{AF}^{\partial_x}_{S,r}(u).
\]
Thus the stated Lebesgue and Sobolev regularity and the support inclusions
are preserved.  The
time support of $u_{\Lambda,t_0}$ is dilated by $\Lambda^{-d}$; on
$\R^2$, its Fourier transform vanishes for $\abs\xi<\Lambda c_0$.
Taking $\Lambda$ sufficiently large and then varying $t_0$ produces
infinitely many solutions with the prescribed time support and, on
$\R^2$, the prescribed Fourier gap.

To obtain smallness in a fixed finite collection of the stated norms, fix
$\Lambda$.  Each scaled norm is bounded by a finite constant depending on
$\Lambda$ times the corresponding unscaled norm, so it suffices to impose
the resulting smaller bounds in the iteration.  In the cubic case, choose
$Q$ sufficiently large for all the prescribed Sobolev exponents and set
\[
 \eps_q=2^{-q-Q-4},\qquad \la_q=\mu_q^{2^{q+Q+4}},
\]
and impose all the prescribed nonnegative Sobolev and Lebesgue bounds at
every step.  Include the auxiliary $L^2$ bound, which controls every
prescribed negative cubic $H^\alpha$ and $H^{\alpha,0}$ norm.
For increments from the parabolic profile, whose Fourier support satisfies
$\abs{k_y}\lesssim\abs{k_x}^2$, use
\[
 \norm f_{H^\alpha}\lesssim\norm f_{H^{2\alpha,0}}
 \quad(\alpha\geq0),
 \qquad
 \norm f_{H^\alpha}\leq\norm f_{L^2}
 \quad(\alpha<0).
\]
For each prescribed negative quadratic Sobolev exponent, choose $p<2$
close enough to $2$ that \eqref{eq:periodic-induction-negative-Sobolev}
or the whole-space Hausdorff--Young estimate applies, and impose the
corresponding $L^p$ bound at every step.  Set
$\delta_q=c2^{-q-20}$ for $q\geq0$, with $c>0$ sufficiently small.  On
$\T^2$, choose $n_*$ so that
\eqref{eq:periodic-induction-initial-error} is smaller than $\delta_0$
for every $0<\gamma\leq1$; then choose $\gamma$ so that $u_0$ satisfies
the prescribed bounds.  On $\R^2$, choose $\gamma$ so that $u_0$ satisfies
the prescribed bounds and \eqref{eq:Euclidean-induction-initial-error} is
smaller than $\delta_0$.  The initial norms and the sums of the increment
norms then satisfy the prescribed bounds.

\end{proof}

\appendix

\section{Periodic stationary solutions}

We adapt the stationary iteration of~\cite{Pathak}.

For a time-independent distribution $f$ on $\T^2$, set
\[
 \norm f_{\A^s_{x\neq0}}
 :=\sum_{\substack{n\neq0\\m\in\Z}}
   \langle(n,m)\rangle^s\abs{\widehat f(n,m)}.
\]
\begin{proof}[Proof of Theorem~\ref{thm:stationary-flexibility}]
For time-independent functions and $h\equiv1$,
\eqref{eq:periodic-update-general} gives
\begin{equation}\label{eq:stationary-error-update}
 E^+=\mathbb P_{x\neq0}\left(
 E+w^2+2uw+(-1)^{(d+1)/2}\partial_x^{d-1}w
   +\kappa\partial_x^{-2}\partial_y^2w\right).
\end{equation}
Estimates \eqref{eq:ROestimate}, \eqref{eq:RNestimate}, and
\eqref{eq:RDestimate}, together with
Proposition~\ref{prop:periodic-absolute-summability}, give the estimates in
Proposition~\ref{prop:onestep} with the temporal error omitted.  At each
step, $\supp\widehat w_q$ is disjoint from
$\supp\widehat u_{q-1}$, and the frequencies may be chosen as in
\eqref{eq:periodic-induction-parameter-choice} so that the sets
$\supp\widehat w_q$ are pairwise disjoint.

Fix $m_*\in\Z\setminus\{0\}$ and $1\leq\gamma\leq2$.  For an integer
$n_*$, set
\[
 u_0(x,y)=\gamma\cos(2\pi n_*x)\cos(2\pi m_*y)
\]
and
\[
 E_0=\mathbb P_{x\neq0}\left(
   (-1)^{(d+1)/2}\partial_x^{d-1}u_0
   +\kappa\partial_x^{-2}\partial_y^2u_0+u_0^2
 \right).
\]
Direct calculation gives
\[
 \norm{E_0}_{\A^{-S}_{x\neq0}}
 \leq C_{d,m_*,S}\left(
    \gamma n_*^{d-1-S}+\gamma^2n_*^{-S}\right),
 \qquad
 \widehat u_0(n_*,m_*)=\frac\gamma4.
\]
Because $1\leq\gamma\leq2$, a single sufficiently large $n_*$ gives
$\norm{E_0}_{\A^{-S}_{x\neq0}}<\delta_0$ for every such $\gamma$.
For each $q$, set
\[
 K_q=\max\left(\{1\}\cup
 \{\langle k\rangle:k\in\supp\widehat u_q
       \cup\supp\widehat E_q\}\right).
\]

For $q\geq1$, apply the stationary one-step estimates with
$p_0,\ldots,p_q$, $\eps=\eps_q$, $\eta=\delta_{q-1}$, and
$\delta_+=\delta_q$.  Choose $A_q$ as required by those estimates, in
particular $A_q\geq1+2\norm{E_{q-1}}_{L^\infty}$, and then choose an
admissible $\la_q$ so that the one-step estimates hold and
\[
 \mu_q\geq256,\qquad 16K_{q-1}\leq\mu_q^{1/2}.
\]
Set
\[
 b_q=P_{\leq\mu_q^{1/2}}^{x,y}(A_q-E_{q-1})^{1/2},
 \qquad w_q=b_q\rho_{\la_q,\eps_q},
 \qquad u_q=u_{q-1}+w_q,
\]
with $E_q$ given by \eqref{eq:stationary-error-update}.
The nonzero frequencies of $\rho_{\la_q,\eps_q}^2$ have absolute
$x$-frequency at least $\mu_q$, whereas
$\supp\widehat{b_q^2}\subset\{\abs k\leq4\mu_q^{1/2}\}$.
Thus frequency separation and
$\widehat{\rho_{\la_q,\eps_q}^2}(0)=1$ give
$\int b_q^2\rho_{\la_q,\eps_q}^2=\int b_q^2$.
Moreover, the cutoff preserves the zero Fourier coefficient, so
\[
 \widehat b_q(0,0)=\int_{\T^2}(A_q-E_{q-1})^{1/2}
 \geq\left(\frac{A_q}{2}\right)^{1/2}.
\]
Consequently,
\[
 \norm{w_q}_{L^2(\T^2)}^2
 =\int_{\T^2}b_q^2
 \geq\abs{\widehat b_q(0,0)}^2
 \geq\frac{A_q}{2}\geq\frac12,
\]
and the sets $\supp\widehat w_q$ are pairwise disjoint.
The bounds for $w_q$ give convergence
of $u_q$ to a limit $u$ in every $L^p(\T^2)$, $1\leq p<2$, while
$E_q\to0$ in $\A^{-S}_{x\neq0}$.  The supports of the quadratic
perturbations lie in a fixed cone, so
\eqref{eq:periodic-induction-negative-Sobolev}, applied to $u-u_q$, gives
convergence in $H^{-\sigma,0}(\T^2)$ and
$H^{-\sigma}(\T^2)$ for every $\sigma>0$.  At each step,
Proposition~\ref{prop:periodic-absolute-summability} gives
\[
 \operatorname{AF}_S^x(u_{q+1})
 \leq\operatorname{AF}_S^x(u_q)+2\delta_q.
\]
Since $\sum_q\delta_q<\infty$, Fatou's lemma gives
$\operatorname{AF}_S^x(u)<\infty$.  Since
$(n_*,m_*)\notin\supp\widehat w_q$ for every $q$,
$\widehat u(n_*,m_*)=\gamma/4$.  Let $P_{\leq K_q}^{x,y}$ be the dilate
of one fixed smooth multiplier which is one on the ball of radius $K_q$
and zero outside the ball of radius $2K_q$.  For $j>q$,
\[
 \abs n\geq\frac12\mu_j\geq128K_{j-1}^2>2K_q
 \qquad\text{on }\supp\widehat w_j,
\]
so
\[
 P_{\leq K_q}^{x,y}u=u_q.
\]
Proposition~\ref{prop:cutoff} gives
\[
 \mathbb P_{x\neq0}(u_q^2)
 \longrightarrow\mathbb P_{x\neq0}(u^2)
 \quad\text{in }\A^{-S}_{x\neq0}.
\]
The definition of $E_0$ and \eqref{eq:stationary-error-update} give, for
every $q$,
\[
 E_q=\mathbb P_{x\neq0}\left(
 u_q^2+(-1)^{(d+1)/2}\partial_x^{d-1}u_q
 +\kappa\partial_x^{-2}\partial_y^2u_q\right).
\]
Letting $q\to\infty$ in this identity, using
$\mathbb P_{x\neq0}(u_q^2)\to\mathbb P_{x\neq0}(u^2)$,
$u_q\to u$ in distributions, and $E_q\to0$, gives the stationary
equation.  Varying
$\gamma\in[1,2]$ gives infinitely many distinct solutions.
Pairwise disjointness of the sets $\supp\widehat w_q$ gives
\[
 \norm{u_q}_2^2
 =\norm{u_0}_2^2+\sum_{j=1}^q\norm{w_j}_2^2
 \geq\frac q2.
\]
Since $P_{\leq K_q}^{x,y}u=u_q$, these smooth Fourier truncations have
unbounded $L^2$ norm.  If $u\in L^2(\T^2)$, Plancherel's theorem would
give $\norm{P_{\leq K_q}^{x,y}u}_2\leq C\norm u_2$ uniformly in
$q$.  Hence $L^2$ is the sharp threshold between the singular solutions
of Theorem~\ref{thm:stationary-flexibility} and the smooth solutions of
Theorem~\ref{thm:stationary-rigidity}.
\end{proof}

\begin{proof}[Proof of Theorem~\ref{thm:stationary-rigidity}]
The stationary equation gives, for every $n\neq0$ and $m\in\Z$,
\begin{equation}\label{eq:stationary-KPI-Fourier}
 \left((2\pi n)^{d-1}+\frac{m^2}{n^2}\right)\widehat u(n,m)
 =\widehat{u^2}(n,m),
 \qquad \widehat u(0,m)=0.
\end{equation}
For every $q>3/2$,
\begin{equation}\label{eq:stationary-reciprocal-sum}
 \sum_{\substack{n\neq0\\m\in\Z}}
 \left((2\pi n)^{d-1}+\frac{m^2}{n^2}\right)^{-q}<\infty.
\end{equation}
Indeed, for every $n\neq0$,
\begin{align*}
 \sum_{m\in\Z}
 \left((2\pi n)^{d-1}+\frac{m^2}{n^2}\right)^{-q}
 &=|n|^{2q}\sum_{m\in\Z}
       \bigl((2\pi)^{d-1}|n|^{d+1}+m^2\bigr)^{-q}\\
 &\leq C_q|n|^{2q}
       \left(|n|^{-(d+1)q}
       +\int_{\R}(|n|^{d+1}+s^2)^{-q}\,ds\right)
 \\
 &\leq C_{d,q}|n|^{(d+1)/2-(d-1)q}.
\end{align*}
Since $d\in\{3,5\}$, the last expression is bounded by
$C_{d,q}|n|^{2-2q}$, which proves
\eqref{eq:stationary-reciprocal-sum}.

Since $u^2\in L^1(\T^2)$, its Fourier coefficients are bounded.
Equations \eqref{eq:stationary-KPI-Fourier} and
\eqref{eq:stationary-reciprocal-sum} with $q=8/5$ imply
$\widehat u\in\ell^{8/5}(\Z^2)$.  The inverse Hausdorff--Young inequality gives
$u\in L^{8/3}(\T^2)$.  It follows that
$u^2\in L^{4/3}$ and
$\widehat{u^2}\in\ell^4$.  Using
\eqref{eq:stationary-reciprocal-sum} with $q=2$ and H\"older's inequality
in \eqref{eq:stationary-KPI-Fourier}, we obtain
\[
 \norm{\widehat u}_{\ell^{4/3}}
 \leq
 \left(\sum_{\substack{n\neq0\\m\in\Z}}
 \left((2\pi n)^{d-1}+\frac{m^2}{n^2}\right)^{-2}\right)^{1/2}
 \norm{\widehat{u^2}}_{\ell^4}<\infty.
\]
The inverse Hausdorff--Young inequality gives $u\in L^4(\T^2)$, and hence
$u^2\in L^2(\T^2)$.  Cauchy--Schwarz and
\eqref{eq:stationary-reciprocal-sum} now give
\[
 \norm{\widehat u}_{\ell^1}
 \leq
 \left(\sum_{\substack{n\neq0\\m\in\Z}}
 \left((2\pi n)^{d-1}+\frac{m^2}{n^2}\right)^{-2}\right)^{1/2}
 \norm{\widehat{u^2}}_{\ell^2}<\infty.
\]
Thus $u\in L^\infty(\T^2)$.

For $n\neq0$,
\[
 (2\pi n)^{d-1}+\frac{m^2}{n^2}
 \geq(2\pi n)^2+\frac{m^2}{n^2}
 \geq c\langle(n,m)\rangle.
\]
Plancherel and
\eqref{eq:stationary-KPI-Fourier} therefore give $u\in H^1(\T^2)$.
Since $u\in H^1\cap L^\infty$, one has $u^2\in H^1$, and the same Fourier
identity yields $u\in H^2$.

For every integer $j\geq2$, $H^j(\T^2)$ is an algebra.  Thus
\[
 u\in H^j
 \quad\Longrightarrow\quad u^2\in H^j
 \quad\Longrightarrow\quad u\in H^{j+1},
\]
where the final implication again follows from
\eqref{eq:stationary-KPI-Fourier}.  Induction gives $u\in H^j$ for every
$j$, and hence $u\in C^\infty(\T^2)$.
\end{proof}

\section*{Acknowledgements}

AR was partially supported by a grant of the Ministry of Research,
Innovation and Digitization, CCCDI - UEFISCDI, project number
ROSUA-2024-0001, within PNCDI IV. The author wishes to thank Nick Gismondi for discussions related to the
presentation of the paper as well as providing an early version of the
decoupling lemma from \cite{KSflex}.

\end{document}